\definecolor{darkgreen}{cmyk}{1,0,1,.2}
\definecolor{m}{rgb}{1,0.1,1}
\definecolor{green}{cmyk}{1,0,1,0}
\definecolor{darkred}{rgb}{0.55, 0.0, 0.0}
\definecolor{test}{rgb}{1,0,0}
\definecolor{cmyk}{cmyk}{0,1,1,0}
\newcounter{diagram}
\numberwithin{diagram}{section}
\numberwithin{equation}{section}
\newtheorem{Equation}{}[section]
\newtheorem{example}[Equation]{Example}
\newtheorem{theorem}[Equation]{Theorem}
\newtheorem{proposition}[Equation]{Proposition}
\newtheorem{lemma}[Equation]{Lemma}
\newtheorem{corollary}[Equation]{Corollary}
\newtheorem{definition}[Equation]{Definition}
\newtheorem{remark}[Equation]{Remark}
\def\Av{\operatorname{Av}}
\def\Dom{\operatorname{Dom}}
\def\ind{\operatorname{ind}}
\def\End{\operatorname{End}}
\def\reg{\operatorname{reg}}
\def\Ext{\operatorname{Ext}}
\def\Kas{\operatorname{Kas}}
\def\Prop{\operatorname{Prop}}
\def\Im{\operatorname{Im}}
\def\Ind{\operatorname{Ind}}
\def\Supp{\operatorname{Supp}}
\def\pr{\operatorname{pr}}
\def\id{\operatorname{id}}
\def\Range{\operatorname{Range}}
\def\supp{\operatorname{supp}}
\def\Ind{\operatorname{Ind}}
\def\ind{\operatorname{ind}}
\def\End{\operatorname{End}}
\def\maB{\mathcal{B}}
\def\maD{\mathcal{D}}
\def\maE{\mathcal{E}}
\def\maH{\mathcal{H}}
\def\maK{\mathcal{K}}
\def\maL{\mathcal{L}}
\def\maM{\mathcal{M}}
\def\maP{\mathcal{P}}
\def\maR{\mathcal{R}}
\def\maF{\mathcal{F}}
\def\maU{\mathcal{U}}
\def\maV{\mathcal{V}}
\def\del{\partial}
\def\C{\mathbb C}
\def\D{\mathbb D}
\def\F{\mathbb F}
\def\R{\mathbb R}
\def\S{\mathbb S}
\def\Z{\mathbb Z}
\def\N{\mathbb N}
\def\maB{{\mathcal B}}
\def\maE{{\mathcal E}}
\def\maF{{\mathcal F}}
\def\maM{{\mathcal M}}
\def\maH{{\mathcal H}}
\def\maU{{\mathcal U}}
\def\maS{{\mathcal S}}
\def\what{\widehat}
\def\pa{\partial}
\def\ep{\epsilon}
\definecolor{darkgreen}{cmyk}{1,0,1,.2}
\definecolor{m}{rgb}{1,0.1,1}
\definecolor{green}{cmyk}{1,0,1,0}
\definecolor{test}{rgb}{1,0,0}
\definecolor{cmyk}{cmyk}{0,1,1,0}
\def\diam{\operatorname{diam}}
\def\Dom{\operatorname{Dom}}
\def\ind{\operatorname{ind}}
\def\End{\operatorname{End}}
\def\redg{\operatorname{red}}
\def\reg{\operatorname{reg}}
\begin{document}

\title{The Higson-Roe sequence for \'etale groupoids \\ I. Dual algebras and compatibility with the BC map}

\author{Moulay-Tahar Benameur\\{IMAG, Univ Montpellier, CNRS, Montpellier, France}\\\textit{Email address}: \texttt{moulay.benameur@umontpellier.fr} \and
Indrava Roy\\Institute of Mathematical Sciences, {HBNI}, Chennai, India\\\textit{Email address}:\texttt{indrava@imsc.res.in}}


\maketitle
\begin{abstract}
We introduce the dual Roe algebras for proper \'etale groupoid actions and deduce the expected Higson-Roe short exact sequence.
When the action is co{-}compact, we show that the Roe $C^*$-ideal of locally compact operators is Morita equivalent to the reduced $C^*$-algebra of our groupoid,
and we further identify the boundary map of the associated periodic six-term  exact sequence with the Baum-Connes map, via a  Paschke-Higson map for groupoids.
For proper actions on continuous families of manifolds of bounded geometry, we associate with any $G$-equivariant Dirac-type family,
a coarse index class which generalizes the Paterson index class and also the Moore-Schochet Connes' index class for laminations.\\
\end{abstract}

\textit{Mathematics Subject Classification (2010)}. 19K33, 19K35, 19K56, 58B34.\\

\textit{Keywords}: Coarse Geometry, Index theory, $K$-theory of $C^*$-algebras, Morita equivalence, Baum-Connes conjecture, Higson-Roe analytic surgery sequence.
\section{Introduction}\label{Preliminaries}

This paper is a first of a series of articles where we systematically investigate the expected universal  Higson-Roe analytic surgery exact sequence  for Hausdorff \'etale groupoids. This first paper is dedicated to the introduction of the dual Roe algebras for proper groupoid actions, and to the identification of the boundary maps appearing in the associated periodic $K$-theory exact sequence, yielding to the notion of coarse $G$-index for Paterson's continuous $G$-families of bounded geometry manifolds.

The $K$-theory index map can nowadays be more efficiently defined using the language of \'etale groupoids with their actions on spaces, mostly manifolds \cite{ConnesBook}.
For a countable discrete group $\Gamma$ for instance, this correspondence goes back to the work of Mischenko and Kasparov and yields for any  $\Gamma$-cover $\Gamma - {\what M}\to M$ over a
closed manifold $M$, to a map from the $K$-homology group of $M$ to the $K$-theory of the $C^*$-algebra of the group $\Gamma$:
$$
\Ind_\Gamma \; : \; K_* (M) \longrightarrow K_*(C^*_r\Gamma).
$$
Any such $\Gamma$-cover is uniquely determined (up to isomorphism)  by a (homotopy class of a) continuous map from $M$ to the classifying space $B\Gamma$ with its universal $\Gamma$-cover $E\Gamma\to B\Gamma$, and one can assemble all these maps into a universal map \cite{BaumConnes}:
$$
\mu_\Gamma : RK_*^\Gamma (E\Gamma) \longrightarrow K_*(C_r^*\Gamma),
$$
where $RK_*^\Gamma (E\Gamma)\simeq RK_* (B\Gamma)$ stands for the topological $K$-homology group of $B\Gamma$, defined using the inductive system of cocompacts closed subspaces,
while $K_*(C_r^*\Gamma)$ is the usual topological $K$-theory of the reduced $C^*$-algebra of $\Gamma$. This assembly map is known, when $\Gamma$ is torsion free, to be an isomorphism
for a large class of groups, and the so-called Baum-Connes conjecture states that this assembly map
should be an isomorphism for all torsion-free countable discrete groups. When the group has torsion, then  one has to replace the universal space $E\Gamma$  by
the universal space for proper $\Gamma$-actions, usually denoted ${\underline E}\Gamma${{, and the Baum-Connes conjecture again states that the similar assembly map
$\mu_\Gamma : RK_*^\Gamma ({\underline{E}}\Gamma) \to K_*(C_r^*\Gamma)$ is an isomorphism}}. For more details on this conjecture, see for instance  \cite{BaumConnesHigson,BaumConnes}.
\\

In their seminal work on {\em{``Mapping Surgery to Analysis''}} \cite{HRI05, HRII05, HRIII05}, N. Higson and J. Roe proved that the assembly map  fits in a six-term periodic exact sequence as a boundary map,
this allowed them to obtain their universal periodic exact sequence for any such $\Gamma$. In analogy with the deep fundamental exact sequence of surgery \cite{Wall},
N. Higson and J. Roe called their sequence the analytic surgery exact sequence and they in particular introduced an analytic structure
group which plays the role of the structure group and is an obstruction group which  vanishes whenever the group $\Gamma$ satisfies the Baum-Connes conjecture.
These results rely in particular on the Voiculescu theorem for ample representations \cite{Voiculescu} and also on the Paschke-Higson duality theorem which extends
Poincar\'e duality to the non-smooth setting \cite{Paschke, HigsonPaschke}.
By using signature operators,  N. Higson and J. Roe went further and constructed an explicit ``commutative diagram'' relating the fundamental
sequence of surgery with their analytic exact sequence, so {\em{bringing surgery to analysis}}. In the same lines, P. Piazza and T. Schick proved later on a
similar commutative diagram relating now the Stolz exact sequence with the Higson-Roe sequence, and deduced  some important results on the moduli
space of metrics of positive scalar curvature \cite{PiazzaSchick}. Important applications to
rigidity conjectures of reduced eta invariants  have then been efficiently explored with the help of these new tools, see for instance \cite{HigsonRoe2010, PiazzaSchick, BenameurRoyJFA}. \\

On the other hand,
when $\Gamma$ acts on a locally compact Hausdorff space $Z$, the assembly map corresponding to this topological dynamical system can be defined using a natural extension of
$\mu_\Gamma$ where one adds the $C^*$-algebra $C_0(Z)$ as {\em{coefficients}}. This natural generalization of the Baum-Connes map can actually
be interpreted as the universal index map  for an \'etale groupoid, namely the transformation groupoid $Z\rtimes \Gamma$.
More generally, there is a well defined assembly map for any locally compact (\'etale) groupoid $G$ which uses a locally compact model for
the classifying space ${\underline E}G$ of proper groupoid actions \cite{Tu, LeGall}. If the unit space is denoted $X=G^{(0)}$, then the Baum-Connes assembly map for $G$ is a map:
$$
\mu_G: RK_*^G ({\underline E}G , X) \longrightarrow K_*(C^*_rG),
$$
where the LHS, {{say the group $RK_*^G ({\underline E}G , X)$,}} is again defined using an inductive system for the classifying space ${\underline E}G$ of proper $G$-actions,
now a limit of {{bivariant $KK$-groups $KK_G(Y, X)$ over cocompact $G$-subspaces $Y$ of ${\underline E}G$}}. The RHS  is again the $K$-theory of the reduced $C^*$-algebra of the groupoid $G$. With no surprise, the construction of the assembly map relies again
on the index theory for groupoid actions, see \cite{ConnesBook}. Notice that the language of groupoid actions allows to encompass the case of discrete countable groups as well as that of foliations and even laminations. \\

{{Following the Higson-Roe program, the}} next step is to introduce the six-term exact sequence for proper actions of \'etale groupoids {{which would incorporate  the previous Baum-Connes assembly map for $G$ as a boundary map}}. It is our goal here to  extend the Higson-Roe constructions and introduce  the dual Roe algebras for proper groupoid actions on families of metric
spaces $\rho: Y\to G^{(0)}$, so as to encompass new geometric situations. Given such  a $G$-proper space $(Y, \rho)$ such that the anchor map $\rho$ is open, and given a Hilbert $G$-module  $\maE$ which is endowed with a non-degenerate $G$-equivariant representation of the $G$-algebra $C_0(Y)$, the notion of (uniform) propagation of operators on $\maE$, with respect to this representation and to the proper family metric on $Y$, is introduced and
we hence define the dual Roe algebras in the same lines as for groups, and easily obtain the corresponding short exact sequence of dual $C^*$-algebras
$$
0 \to C^*_G (Y, \maE) \hookrightarrow D^*_G (Y, \maE) \rightarrow Q^*_G (Y, \maE) \to 0.
$$
Our candidate six-term exact sequence {{is then deduced by applying the topological $K$-functor and by using Bott periodicity:}}
\begin{displaymath}\label{Figure1}
\xymatrixcolsep{1pc}\xymatrix{
K_{*} (C^*_G (Y, \maE)) \ar[rr]   &  & K_{*} (D^*_G (Y, \maE))   \ar[dl]^{}\\ & K_*(Q^*_G (Y, \maE))  \ar[ul]_{\partial_*}  }
\end{displaymath}
 As an important application,
we consider the Paterson category of families of smooth bounded geometry manifolds with $G$-actions and
we deduce that the coarse $G$-index of $G$-invariant families $\maD$ of  fully  elliptic operators acting on the sections of a $C^{\infty, 0}$-bundle $E$ is well defined:
$$
\Ind_G (\maD) \; \in \; K_{*} (C^*_G (Y, \maE_{Y, E})).
$$

{{The next important result proved in the present paper is the compatibility of the boundary map $\partial_*$ with the Baum-Connes map for the groupoid $G$. To this end, one has to choose specific Hilbert modules which are naturally associated with $Y$.
More precisely, given a full $G$-equivariant $\rho$-system $(\mu_x)_{x\in X}$ on $Y$, and denoting by  $\maE_{Y'}$ the Hilbert $G$-module corresponding to the continuous field of Hilbert spaces over $X$ given by $\left(L^2(Y_x, \mu_x)\otimes \ell^2(G^x)\right)_{x\in X}$,  we obtain the following theorem.}}

\begin{theorem}\ Assume that $Y$ is $G$-compact, then the dual Roe algebra $C^*_G (Y, \maE_{{{Y'}}})$ is Morita equivalent to the reduced $C^*$-algebra of the groupoid $G$. In particular, we have an isomorphism
$$
\maM_* : K_{*} (C^*_G (Y, \maE_{{Y'}})) \longrightarrow K_* (C^*_r G).
$$
\end{theorem}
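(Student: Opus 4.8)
The plan is to exhibit $\maE_{Y'}$ as a \emph{full} Hilbert module over $C^*_rG$ whose algebra of compact operators is precisely the Roe ideal $C^*_G(Y,\maE_{Y'})$, and then to invoke the general principle that a full Hilbert $B$-module $\maE$ is a $(\mathcal K_B(\maE),B)$-imprimitivity bimodule, so that $\mathcal K_B(\maE)$ and $B$ are Morita equivalent with $\maM_*$ the induced Rieffel isomorphism on $K$-theory. This is the groupoid counterpart of the classical identification of the $\Gamma$-equivariant Roe algebra of a free cocompact $\Gamma$-manifold $\wtit M$ with $C^*_r\Gamma\otimes\mathcal K$: here $G$-compactness replaces cocompactness, and the $\ell^2(G^x)$ tensor factor built into $\maE_{Y'}$ plays the role of the ampleness that makes the module large enough.

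First I would upgrade $\maE_{Y'}$, a priori a $G$-equivariant Hilbert $C_0(X)$-module, to a Hilbert $C^*_rG$-module $\what\maE_{Y'}$ by the standard averaging construction: on a dense subspace one sets $\langle\xi,\eta\rangle_{C^*_rG}$ to be the element of $C_c(G)$ given by $g\mapsto\langle\xi_{s(g)},(g^{-1}\eta)_{s(g)}\rangle$, with $C_c(G)$ acting by the corresponding convolution-type formula, and then completes. Positivity of this $C^*_rG$-valued form is the standard (reduced-norm) fact underlying such module structures. For the bare module $(\ell^2(G^x))_x$ this recipe returns $C^*_rG$ as a free rank-one module over itself, which is exactly why the extra $L^2(Y_x,\mu_x)$-factor is present in $\maE_{Y'}$.

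The technical heart is the isometric $*$-isomorphism $C^*_G(Y,\maE_{Y'})\cong\mathcal K_{C^*_rG}(\what\maE_{Y'})$, and this is where $G$-compactness enters. Fix a cutoff function $c\in C_c(Y)$ attached to the $G$-compact proper action and the system $(\mu_x)_x$, so that $\sum_g c(g^{-1}y)=1$. If $T$ is a $G$-invariant, locally compact operator on $\maE_{Y'}$ of propagation $\le R$, write $T=\sum_{\alpha,\beta}\alpha(c)\,T\,\beta(c)$ with multiplication operators; finite propagation kills all terms for which $\supp\alpha(c)$ and $\supp\beta(c)$ are more than $R$ apart, and properness together with compactness of $\supp c$ forces, after using $G$-invariance to slide $\alpha$ onto a unit, only finitely many relative positions $\delta$ to survive, each contributing the $C_0(X)$-compact operator $c\,T\,\delta(c)$. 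Thus $T$ is a $G$-patterned (infinite) sum of finitely many types of $C_0(X)$-compact pieces, i.e. a $C^*_rG$-compact operator on $\what\maE_{Y'}$; conversely a rank-one $C^*_rG$-operator $\theta_{\xi,\eta}$ with $\xi,\eta$ compactly supported is manifestly the $G$-average of a $C_0(X)$-finite-rank operator between compactly supported sections, hence lies in the Roe algebra. I expect the hard part to be precisely this identification in full rigor: showing the two assignments are mutually inverse, multiplicative and $*$-preserving at the dense level; matching the operator norm on $\maE_{Y'}$ with the $\mathcal K_{C^*_rG}$-norm so the identification passes to completions; and carrying the bookkeeping through the continuous field of measures $\mu_x$ and the (generally non-free, finite-isotropy) proper action.

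Finally I would verify that $\what\maE_{Y'}$ is full, i.e. that $\langle\what\maE_{Y'},\what\maE_{Y'}\rangle_{C^*_rG}$ spans a dense subspace of $C^*_rG$. Restricted to the units one has $\langle\xi,\xi\rangle_{C^*_rG}(x)=\|\xi_x\|^2$, and \emph{fullness of the $\rho$-system} is exactly what ensures that, with compactly supported $\xi$, every nonnegative function in $C_c(X)$ is obtained this way; translating by elements supported in the $\ell^2(G^x)$-factor then yields a dense subset of $C_c(G)$. Combined with the isomorphism of the previous step and the imprimitivity-bimodule principle, this produces the $(C^*_G(Y,\maE_{Y'}),C^*_rG)$-equivalence bimodule $\what\maE_{Y'}$ and hence the $K$-theory isomorphism $\maM_*$, completing the proof.
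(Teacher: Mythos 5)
Your proposal is correct and follows essentially the same route as the paper: the module $\what\maE_{Y'}$ you build from $C_c(Y\rtimes_r G)$ with the averaged $C^*_rG$-valued inner product is precisely the paper's Connes--Skandalis module $L^2_G(Y')$, and the cutoff/averaging identification of its $C^*_rG$-compact operators with $C^*_G(Y,\maE_{Y'})$ is exactly the content of Proposition \ref{Cstariso}. The only cosmetic difference is the fullness step, which the paper obtains from freeness and properness of the $Y'$-action by citing Muhly--Renault--Williams rather than by your direct argument via the full $\rho$-system and translation along bisections; both yield the imprimitivity bimodule and hence $\maM_*$.
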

When the action of $G$ on $Y$ is for instance free, then this theorem holds already with the Hilbert $G$-module $\maE_Y$ associated with the continuous field of Hilbert spaces $\left(L^2(Y_x, \mu_x)\right)_{x\in X}$. Using the isomorphism, we see that our class $\Ind_G (\maD)$  extends to the non-cocompact bounded geometry case,
the  Paterson $G$-index and hence in the case of bounded geometry laminations the  Moore-Schochet Connes' index class, {{which was  previously only defined when the ambient space is compact.}}
In the presence of a fiberwise $G$-invariant metric of uniformly positive scalar curvature and under the usual spin assumption, we also define a secondary class living in
the structure group $K_{*} (D^*_G (Y, \maE_{Y, E}))$, {{extending results of Roe-Higson and Piazza-Schick \cite{HRIII05, HigsonRoe2010, PiazzaSchick}.}} \\

On the other hand, the Paschke-Higson morphism can be defined for any Hilbert $G$-module $\maE$ as above with the non-degenerate $G$-equivariant representation of $C_0(Y)$, and is  now valued in  a $G$-equivariant $KK$-group \cite{LeGall}:
$$
\maP_* : K_*(Q^*_G (Y, \maE)) \longrightarrow KK^{*+1}_G (Y, X),
$$
The  compatibility theorem  can then be stated as follows:

\begin{theorem}\
For $*=0$ and $*=1$, the following diagram commutes:
\[
\begin{CD}\label{BaumConnesassembly}
K_*(Q^*_G(Y,\maE_{{Y'}})) @> \del_*    >> K_{*+1}(C^*_G(Y,\maE_{{Y'}})) \\
@V\maP_* VV    @V\maM_{*+1}VV \\
KK^{*+1}_G(Y,X) @> \mu^{*+1}_{Y} >> K_{*+1}(C^*_{\redg}G)\\
\end{CD}
\]
\end{theorem}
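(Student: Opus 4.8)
\emph{Proof strategy.} The plan is to unwind the four maps at the level of explicit cycles and to recognise both composites as one and the same ``descended index''. Fix a class $[u]\in K_*(Q^*_G(Y,\maE_{Y'}))$ and lift the corresponding element of the quotient to an operator $F\in D^*_G(Y,\maE_{Y'})$ (a near-unitary when $*=1$, a near-projection when $*=0$). Recall how the Paschke--Higson map is built from $F$: because the elements of $C^*_G(Y,\maE_{Y'})$ are locally compact, one has $\pi(f)\,C^*_G(Y,\maE_{Y'})\,\pi(g)\subseteq\maK_{C_0(X)}(\maE_{Y'})$ for all $f,g\in C_0(Y)$, so that the triple $\bigl(\maE_{Y'},\pi,F\bigr)$ — where $\maE_{Y'}$ is regarded as the $G$-equivariant Hilbert $C_0(X)$-module given by the field $\bigl(L^2(Y_x,\mu_x)\otimes\ell^2(G^x)\bigr)_x$ and $\pi$ is the representation of $C_0(Y)$ — satisfies exactly the conditions of a $G$-equivariant Kasparov $(C_0(Y),C_0(X))$-cycle, and $\maP_*[u]$ is its class in $KK^{*+1}_G(Y,X)$. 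On the other hand $\del_*[u]$ is the coarse index in $K_{*+1}(C^*_G(Y,\maE_{Y'}))$ obtained from the usual Fredholm-type construction applied to $F$, a class produced entirely inside $C^*_G(Y,\maE_{Y'})$.

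\emph{Key steps.} First, recall from the proof of the Morita equivalence above that $C^*_G(Y,\maE_{Y'})\cong\maK_{C^*_{\redg}G}(\maE_{Y'})$, where $\maE_{Y'}$ carries the $C^*_{\redg}G$-valued inner product coming from its $\ell^2(G^x)$-legs; fullness (and hence that $\maM_*$ is an isomorphism) uses the hypothesis that $(\mu_x)_x$ is a \emph{full} $\rho$-system together with $G$-compactness of $Y$. Thus $\maM_{*+1}$ is the isomorphism induced by this identification, i.e. the index computed on the Hilbert $C^*_{\redg}G$-module $\maE_{Y'}$. Second, recall that for the $G$-compact proper space $Y$ the assembly map $\mu^{*+1}_Y$ admits the description $x\mapsto [p_c]\otimes_{C_0(Y)\rtimes_r G} j_G^{\redg}(x)$, where $j_G^{\redg}\colon KK^{*+1}_G(C_0(Y),C_0(X))\to KK^{*+1}(C_0(Y)\rtimes_r G,\,C^*_{\redg}G)$ is the reduced descent — using $C_0(X)\rtimes_r G=C^*_{\redg}G$ — and $[p_c]\in K_0(C_0(Y)\rtimes_r G)$ is the cutoff projection attached to a cutoff function for the $G$-action on $Y$. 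The heart of the matter is then to compute $j_G^{\redg}$ of the Paschke--Higson cycle and cut by $[p_c]$: the descent produces the cycle $\bigl(\maE_{Y'}\rtimes_r G,\ \pi\rtimes 1,\ F\rtimes 1\bigr)$, and one checks that $p_c\cdot(\maE_{Y'}\rtimes_r G)$ is isomorphic, as a right Hilbert $C^*_{\redg}G$-module carrying a left action of $C^*_G(Y,\maE_{Y'})$, to $\maE_{Y'}$ with the bimodule structure above, the operator $p_c(F\rtimes 1)p_c$ going over to $F$. Granting this, $\mu^{*+1}_Y(\maP_*[u])$ is the index of $F$ on the Hilbert $C^*_{\redg}G$-module $\maE_{Y'}$, which is precisely $\maM_{*+1}(\del_*[u])$. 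The cases $*=0$ and $*=1$ use the even and the odd Fredholm pictures respectively and are parallel; $G$-equivariance is carried along throughout.

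\emph{Alternative packaging and main obstacle.} Equivalently, one can assemble these steps into a morphism from the six-term exact sequence of $0\to C^*_G(Y,\maE_{Y'})\to D^*_G(Y,\maE_{Y'})\to Q^*_G(Y,\maE_{Y'})\to 0$ to a six-term sequence having $\mu^{*+1}_Y$ as a boundary map, and then deduce commutativity of the stated square from naturality of the connecting homomorphism; to avoid verifying the cycle identity for a general class one may first establish it in a universal model and then appeal to naturality. In either form the main obstacle is reconciling the cutoff-projection description of $\mu^{*+1}_Y$ (a Kasparov-descent/convolution-algebra construction) with the imprimitivity-bimodule description of $\maM_{*+1}$ (a Roe-algebraic construction): the module identification $p_c\cdot(\maE_{Y'}\rtimes_r G)\cong\maE_{Y'}$ requires delicate bookkeeping with the groupoid data — the fibres $G^x$, the Haar system, the measures $\mu_x$, and the fullness of the $\rho$-system — and one must organise the passage through the descent with care, since the Paschke--Higson cycle only has $F^2-1$ \emph{locally} compact rather than compact (one way around this is to work in the relative/mapping-cone $KK$-group, another is a judicious choice of lift $F$). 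Once this module identification is secured, the remainder is a routine comparison of two realisations of the same index class.
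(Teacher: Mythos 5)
Your proposal follows essentially the same route as the paper: both composites are identified with the index of the lifted operator acting on the Connes--Skandalis Hilbert $C^*_{\redg}G$-module $L^2_G(Y')$, using on one side the isomorphism $\Phi_*\colon \maK_{C^*_{\redg}G}(L^2_G(Y'))\stackrel{\cong}{\to} C^*_G(Y,\maE_{Y'})$ of Proposition \ref{Cstariso}, and on the other the Le Gall descent of the Paschke--Higson cycle cut down by the Mishchenko cutoff projection $e(y,g)=\sqrt{c}(y)\sqrt{c}(yg)$, together with the isometry $I\colon L^2_G(Y')\to \maE_{Y'}\rtimes G$ whose range is $\pi_{Y\rtimes_r G}(e)(\maE_{Y'}\rtimes G)$ (Proposition \ref{isometry}). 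So the architecture is right.

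The two points you ``grant'', however, are exactly where the proof lives, and one of them is stated too strongly. First, $p_c(F\rtimes 1)p_c$ does \emph{not} go over to $F$ on the nose: under $I$ it agrees with $\mathfrak{F}$ only modulo $\maK_{C^*_{\redg}G}(L^2_G(Y'))$, and establishing that the error is compact is the paper's Lemma \ref{compactperturb}. There one chooses a finite-propagation representative $F$, computes $I^*\bar{F}I-\mathfrak{F}$ explicitly as an averaged operator built from $\pi_Y(\sqrt{c})\,[F,\pi_Y(\sqrt{c})]$, and uses that this commutator is $C_0(X)$-compact (since $F\in D^*_G$), that $c$ is compactly supported, and that properness makes the averaging sum locally finite, so the error lies in $C^*_G(Y,\maE_{Y'})\cong\maK_{C^*_{\redg}G}(L^2_G(Y'))$. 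This is the analytic heart of the theorem, not bookkeeping; without it the comparison of cycles does not go through. Second, that $\maM_{*+1}\circ\del_*[u]$ is represented by $\bigl(L^2_G(Y'),\lambda_\C,\mathfrak{F}\bigr)$ in the even case (and by the corresponding two-by-two picture in the odd case) also requires an argument: in the paper this is Proposition \ref{maM}, which identifies $\del_0[P]$ with the cycle $\bigl(C^*_G(Y,\maE_{Y'}),\,F=2P-1\bigr)$ via the multiplier-algebra description of the boundary map and then tensors over $C^*_G(Y,\maE_{Y'})$ with the Morita module, and Proposition \ref{OddCase} with the standard $P=UP_0U^*$ construction. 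Finally, your worry that the Paschke--Higson cycle only has $F^2-1$ locally compact is not an obstacle and no mapping-cone detour is needed: local compactness is precisely the Kasparov cycle condition, the descent $(\maE_{Y'}\rtimes G,\pi\rtimes\lambda,\bar{F})$ is again a cycle, and cutting by the compactly supported projection $e$ (which lies in the crossed-product algebra) produces a genuine Fredholm operator over $C^*_{\redg}G$; the same Lemma \ref{compactperturb} computation then concludes.
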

When $G=X$ is a space groupoid such that $X$ is compact and metrizable, our results  are closely related with the classical results of Pimsner-Popa-Voiculescu \cite{PPV}. The case when $G$ is a discrete countable group has also been studied in the recent article \cite{GWY16} as well as  in \cite{Zenobi}, we thank the referee for pointing out  Zenobi's paper to us. {{For {\underline{free}} and proper discrete group actions, Zenobi showed, using the Paschke isomorphism, that his structure group,  roughly speaking the homotopy fiber of the assembly map, is isomorphic to the Higson-Roe structure group.  In fact, Zenobi went further and introduced in his thesis a structure group even for  Lie groupoids while our definition in the present paper is the exact extension of the Higson-Roe approach to the category of locally compact groupoids. It is therefore an interesting task to compare the Zenobi structure  group with ours in the case of Lie groupoids. There  always exists an obvious group morphism from our structure group to the Zenobi one, and if the groupoid is torsion-free and satisfies Paschke-Higson duality, then one easily shows that this morphism is an isomorphism, but the general case remains to be investigated.}} Finally, in order to prove the  universal Higson-Roe exact sequence and the corresponding universal commutative diagram, further properties of the Paschke-Higson map are needed. {In order} to keep this paper in a {reasonable} size, these properties together with some geometric corollaries, will be  investigated in the next articles of this series.

\tableofcontents

\medskip

{\bf{Preliminaries and notations.}} The groupoid $G$ will be a locally compact Hausdorff \'{e}tale groupoid. We shall denote by  $X:=G^{(0)}$ the space of units of the groupoid $G$ and by $G^{(1)}$ the space of arrows of $G$. Then $X$ is identified with a closed (and open) subspace of $G^{(1)}$ and we shall  sometimes also denote by $G$ the space $G^{(1)}$ of arrows of the groupoid $G$.
The source and range maps are denoted  $s$ and $r$ respectively and are then \'etale maps from $G^{(1)}$ to $G^{(0)}$. Given subsets $A$ and $B$ of $X$, we denote by $G_A$ and $G^B$ the subspaces of $G$ defined as $s^{-1} A$ and $r^{-1} B$ respectively. The intersection $G_A\cap G^B$ is denoted $G_A^B$ and if $A=\{x\}$ then we denote $G_A$ as simply $G_x$, and similarly for the obvious notations $G^x$ and $G_x^{x'}$.

Given a locally compact Hausdorff space $Z$, we denote by $C_0(Z)$ the $C^*$-algebra of continuous complex valued functions vanishing at infinity. As usual,  $C_b(Z)$ denotes the $C^*$-algebra of bounded continuous functions on $Z$. For simplicity, and {in order} to avoid some annoying technicalities, we shall assume that  the {Tietze} theorem applies for all our spaces and all commutative $C^*$-algebras will have countable approximate units.
If $A$ is a given $C^*$-algebra and $\maE$ is a Hilbert $A$-module, then we denote by $\maL_A (\maE)$ the $C^*$-algebra of adjointable operators on $\maE$,
while $\maK_A (\maE)$ is the ideal of $A$-compact operators, see \cite{Kasparov, Lance} for more details on the properties of adjointable operators. \\

{\em{ Acknowledgements.}}
The authors  wish to thank P.S. Chakraborty, T. Fack, N. Higson, V. Mathai, P.-E. Paradan, P. Piazza, B. Saurabh, G. Skandalis, R. Willett and V. Zenobi
for many helpful discussions.
MB thanks the French National Research Agency for support via the ANR-14-CE25-0012-01 (SINGSTAR).
IR thanks the Homi Bhabha National Institute, the Indian Statistical Institute Delhi, and the Indian Science and Engineering Research Board via MATRICS project MTR/2017/000835 for support.

%
%

\section{Dual algebras for \'{e}tale groupoids}\label{Roealgebra}

We shall freely use the results given in Appendix \ref{AppendixA}, this is an overview of some needed constructions on $G$-algebras, Hilbert $G$-modules and $G$-representations.

\subsection{$G$-spaces}

The first class of  $G$-algebras that we shall use in this paper is given by the commutative ones.

\begin{definition}[$G$-space]
\label{Gspace}
A (Hausdorff) topological space $Y$ is a (right) $G$-space if we are given:
\begin{enumerate}
\item a continuous  map $\rho: Y\rightarrow X$, called the anchor map;
\item a continuous map $\lambda:  Y\rtimes_r G\rightarrow Y$ such that:
\begin{itemize}
\item $\rho(\lambda(y,\gamma))=s(\gamma)$ and $\lambda(y,x)=y$, $x{{\in X}}$ is identified with its image in $G^{(1)}$;
\item $\lambda(\lambda(y,\gamma),\gamma')=\lambda(y,\gamma\gamma'), \text{ if } (\gamma,\gamma')\in G^{(2)} \text{ with }\rho(y)=r(\gamma)$,
\end{itemize}
where $ Y\rtimes_r G:= \{(y,\gamma)\in Y\times G /  \rho(y)= r(\gamma) \}$.
\end{enumerate}
\end{definition}
We shall write $\lambda(y,\gamma)$ as simply $y\gamma$ and refer to the $G$-space $(Y, \rho, \lambda)$ simply as $(Y, \rho)$ or sometimes as just $Y$.  Associated with any such $G$-space, there is an equivalence relation $\sim$ defined by
$$
y\sim y' \Longleftrightarrow \left[ \exists \gamma\in G^{\rho(y)}, y\gamma = y' \right].
$$
The equivalence classes are also called $G$-orbits and the quotient space is then endowed with its quotient topology.


\begin{remark}
We may assume in the previous definition that $\rho$ is surjective since only the $G$-saturated subspace $\rho(Y)$ of $X$ and the subgroupoid $G_{\rho (Y)}^{\rho (Y)}$ would be involved. Moreover, in most of the interesting examples for us, the anchor map is open and surjective. We shall therefore restrict ourselves to the case of an open surjective anchor map so as to avoid  u.s.c. fields which are not continuous in the sense of Dixmier.
\end{remark}

The following proposition is then standard and the proof is a straightforward verification which is omitted.
\begin{proposition}
\label{isopullback}
Let $Y$ be a $G$-space as above. Then $C_0(Y)$ is a $G$-algebra, more precisely:
\begin{enumerate}
\item We have $C_0(G)$-algebra identifications
$$
s^*C_0(Y)\cong C_0(Y\rtimes_s G)\text{ and } r^*C_0(Y)\cong C_0(Y\rtimes_r G).
$$
\item The $G$-algebra structure $\alpha_Y: s^*C_0(Y)  \longrightarrow r^*C_0(Y)$ is defined by
$$
\alpha_Y(\varphi) (y, g) = \varphi (yg, g), \text{ for }\varphi\in C_c(Y\rtimes_s G).
$$
\end{enumerate}
\end{proposition}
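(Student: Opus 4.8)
The plan is to prove the three assertions in order: that $C_0(Y)$ is a $C_0(X)$-algebra whose fibre at $x$ is $C_0(Y_x)=C_0(\rho^{-1}(x))$ and whose fibres form a continuous field, that the pullbacks of this $C_0(X)$-algebra along $s$ and $r$ are the function algebras of the fibred products $Y\rtimes_s G$ and $Y\rtimes_r G$, and that the action $\lambda$ induces the displayed isomorphism $\alpha_Y$ satisfying the cocycle condition. For the first point, continuity of $\rho$ gives a $*$-homomorphism $\rho^*:C_0(X)\to C_b(Y)=M(C_0(Y))$, automatically central since $C_b(Y)$ is commutative; non-degeneracy of $\rho^*(C_0(X))\,C_0(Y)$ follows from surjectivity of $\rho$ with the Tietze hypothesis, since for $g\in C_0(Y)$ and $\varepsilon>0$ the set $\{\,|g|\ge\varepsilon\,\}$ is compact, its image under $\rho$ is compact in $X$, and one picks $f\in C_c(X)$ equal to $1$ on a neighbourhood of that image so that $\rho^*(f)g$ is $\varepsilon$-close to $g$. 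The fibre $C_0(Y)_x=C_0(Y)/\overline{I_x\,C_0(Y)}$, with $I_x=\{f\in C_0(X):f(x)=0\}$, is identified with $C_0(Y_x)$ by restriction to $Y_x$: surjectivity is Tietze again, and injectivity amounts to $\overline{I_x\,C_0(Y)}=\{g\in C_0(Y):g|_{Y_x}=0\}$. The one point here that is not pure formality is that $(C_0(Y_x))_{x\in X}$ is a continuous field in the sense of Dixmier, i.e.\ $x\mapsto\|g(x)\|$ is continuous and not merely upper semicontinuous; this is precisely where openness of $\rho$ is used, and is the reason for the restriction made in the Remark preceding the statement (see Appendix~\ref{AppendixA}).

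Next I would identify the pullbacks. Since $s$ is \'etale it is an open continuous surjection, so its pullback $Y\rtimes_s G\to G$, $(y,\gamma)\mapsto\gamma$, is again an open continuous surjection (openness, continuity and surjectivity all being stable under pullback), and hence $C_0(Y\rtimes_s G)$ is a continuous $C_0(G)$-algebra with fibre $C_0(Y_{s(\gamma)})$ over $\gamma$; by definition $s^*C_0(Y)=C_0(G)\otimes_{s,C_0(X)}C_0(Y)$ is the continuous $C_0(G)$-algebra with the same fibres. The canonical map $\psi\otimes g\mapsto\bigl((y,\gamma)\mapsto\psi(\gamma)g(y)\bigr)$ is a fibrewise injective, hence isometric, $*$-homomorphism, and its range is dense by Stone--Weierstrass on $Y\rtimes_s G$; so it extends to the asserted $C_0(G)$-algebra isomorphism $s^*C_0(Y)\cong C_0(Y\rtimes_s G)$, and the same argument with $r$ in place of $s$ gives $r^*C_0(Y)\cong C_0(Y\rtimes_r G)$.

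Finally, for the $G$-algebra structure I would argue directly from the action. The map $\Phi:Y\rtimes_r G\to Y\rtimes_s G$, $\Phi(y,\gamma)=(y\gamma,\gamma)$, is well defined because $\rho(y\gamma)=s(\gamma)$, and the action axioms $\lambda(\lambda(y,\gamma),\gamma')=\lambda(y,\gamma\gamma')$ and $\lambda(y,x)=y$ show that $(y',\gamma)\mapsto(y'\gamma^{-1},\gamma)$ is a two-sided inverse; since $\lambda$ and groupoid inversion are continuous, $\Phi$ is a homeomorphism commuting with the projections to $G$. Transporting $\Phi^*$ through the identifications of the previous step yields a $C_0(G)$-algebra isomorphism $\alpha_Y:s^*C_0(Y)\to r^*C_0(Y)$ with $\alpha_Y(\varphi)(y,\gamma)=\varphi(y\gamma,\gamma)$ for $\varphi\in C_c(Y\rtimes_s G)$, which is dense. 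Fibrewise $(\alpha_Y)_\gamma=R_\gamma^*$ where $R_\gamma:Y_{r(\gamma)}\to Y_{s(\gamma)}$, $y\mapsto y\gamma$; the relation $R_{\gamma\gamma'}=R_{\gamma'}\circ R_\gamma$, which is again the associativity axiom for $\lambda$, gives $(\alpha_Y)_{\gamma\gamma'}=(\alpha_Y)_\gamma\circ(\alpha_Y)_{\gamma'}$ over $G^{(2)}$, while $R_x=\mathrm{id}$ gives $(\alpha_Y)_x=\mathrm{id}$, so $\alpha_Y$ is a $G$-algebra structure on $C_0(Y)$. Everything else here --- continuity of the maps in sight, density of $C_c$, and the matching of the pullback norms --- is the routine verification the statement refers to; the only substantive ingredient, used throughout, is the continuity of the fields, which is exactly what the openness of the anchor and of $s,r$ provides.
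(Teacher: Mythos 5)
Your proof is correct and supplies exactly the standard verification that the paper declares omitted: the $C_0(X)$-algebra structure on $C_0(Y)$, the identification of the pullbacks in (1), the transport of the action homeomorphism $(y,\gamma)\mapsto(y\gamma,\gamma)$ to get $\alpha_Y$, and the fibrewise cocycle identity $(\alpha_Y)_{\gamma\gamma'}=(\alpha_Y)_{\gamma}\circ(\alpha_Y)_{\gamma'}$. The only cosmetic difference from the route indicated in the remark following the proposition is that you realize $s^*C_0(Y)\cong C_0(Y\rtimes_s G)$ (and likewise for $r$) via the canonical map on the balanced tensor product, fibrewise injectivity and Stone--Weierstrass density, whereas the paper points to the surjective restriction maps $C_0(Y\times G)\to C_0(Y\rtimes_s G)$ and $C_0(Y\times G)\to C_0(Y\rtimes_r G)$; the two arguments are interchangeable.
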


\begin{remark}
 The proof   of Proposition \ref{isopullback} uses the fact that the restriction maps $ C_0(Y\times G)\rightarrow C_0(Y\rtimes_s G)$ and $ C_0(Y\times G)\rightarrow C_0(Y\rtimes_r  G)$ induce the announced isomorphisms.
\end{remark}


A locally compact  Hausdorff $G$-space $Y$ is called a  proper $G$-space (we also say that the $G$-action is proper)  when the map
$$
Y\rtimes_r G \longrightarrow Y \times Y \text{ given by } (y,\gamma)\longmapsto (y\gamma, y),
$$
is proper. The $G$-space $Y$ is a free $G$-space (we also say that the $G$-action is free) if the above map is injective, i.e. for any $y\in Y$, the isotropy group
$$
G(y) :=\{\gamma\in G^{\rho(y)}, y\gamma = y\},
$$
is reduced to $\{\rho (y)\}$.
The space of orbits for a proper $G$-action on $Y$ is then Hausdorff. A proper locally compact Hausdorff $G$-space $Y$ is $G$-compact if the quotient space of orbits is  compact.

\begin{definition}[Cutoff function]
\label{cutoff}
A cutoff function for a $G$-space $(Z,\rho)$ is a continuous  map $c: Z\rightarrow [0, 1]$ such that:
 $$
 \sum_{g \in G^{ \rho (z)}} c(zg)=1, \quad \forall z \in Z.
 $$
 and such that for any compact subspace $K$ of $Z$ the space $(K\cdot G) \cap \Supp (c)$ is compact.
\end{definition}

We recall that proper $G$-spaces always  have cutoff functions \cite{Tu}, in the $G$-compact case, these cutoff functions are then compactly supported.

%

\subsection{Metric $G$-spaces and Roe algebras}\label{metricGspace}

All the $G$-spaces that will be considered are associated with $G$-equivariant continuous fields of commutative $C^*$-algebras over $X$ in the sense of Dixmier. This imposes for such $G$-space $(Y, \rho)$ that  the anchor map  $\rho$ {{is}} open. As explained above, we shall assume  that it is also surjective.

\begin{definition}
A locally compact (right) $G$-space $(Y, \rho)$ with the anchor map $\rho:Y\to X$ is a {\em{$G$-family of proper metric spaces}} if  we are given a continuous scalar valued function $d_Y$ on the closed subset $\{(y, y')\in Y^2, \rho (y)= \rho (y')\}$ of $Y^2$ such that
\begin{enumerate}
\item For any  fiber  $Y_x:=\rho^{-1} (x)$ of $\rho$, the restriction $d_x$ of $d_Y$ to $Y_x \times Y_x$  is a distance which defines the induced topology of $Y_x$.
\item (properness) {{Any closed bounded subset $Z$ of $Y$ such that $\rho (Z)$ is compact in $X$, is itself compact in $Z$. Boundedness means that  $\sup_{x\in X} \diam_{d_{\rho(y)}} (\rho^{-1} (x) \cap Z) < +\infty$. }}
\item (invariance)  For any $g\in G$ and any $(y,y')\in Y_{r(g)}^2$, we have $d_{s(g)} ( yg, y'g) = d_{r(g)} (y, y')$.
\end{enumerate}
\end{definition}
In his proof of the Novikov conjecture for (hyper)bolic groupoids \cite{Tu}, Jean-Louis Tu introduced in the general framework of topological spaces the notion of {\em a continuous family of metric spaces} together with an isometric action of a (\'etale) groupoid $G$ by imposing the first and third axioms above. In our case, we restrict ourselves to the locally compact case and we moreover impose the properness axiom which is the usual condition  needed to define the coarse Roe algebras.  So the terminology ``$G$-family'' includes already that $G$ acts isometrically. Since $G$ acts properly on $Y$, an easy argument shows indeed that any metric structure $d_Y$ satisfying the first and second axioms gives rise a metric structure $d'_Y$ which also satisfies the last axiom.

 If $Z_1, Z_2$ are two subspaces of $Y$, then the distance $d_Y(Z_1, Z_2)$ will be the fiberwise distance given when $\rho (Z_1)\cap \rho (Z_2)\neq \emptyset$ by
$$
d_Y(Z_1, Z_2) =\inf\{d_Y(z_1, z_2)\vert (z_1, z_2)\in Z_1\times Z_2, \rho(z_1)=\rho(z_2)\}.
$$
So, if $\rho (Z_1)\cap \rho (Z_2) = \emptyset$, then the convention is $d_Y(Z_1, Z_2) = +\infty$.

%
%
%
%
%
%
%


Recall that the continuous anchor map $\rho$ is open and surjective and that $(Y, d_Y)$ is a $G$-family of proper metric spaces.
Let $E$ be a Hilbert $G$-module which corresponds to the continuous field of Hilbert spaces $(H_x)_{x\in X}$ over $X$ in the sense of Dixmier. So,  any element $g \in G$ yields a unitary isomorphism $V_g: H_{s(g)}\xrightarrow{\cong} H_{r(g)}$ with the relation over $G^{(2)}$ recalled in Appendix \ref{AppendixA}.
Consider now a non-degenerate $G$-equivariant $C_0(X)$-representation $\pi: C_0(Y) \rightarrow \maL_{C_0(X)}(E)$.

\begin{definition}[Finite propagation]
An operator $T\in \maL_{C_0(X)}(E)$ has finite propagation with respect to $d_Y$ if there exists a constant $R >0$ such that
$$ \pi(f)T\pi(g)=0 \quad \text{whenever} \quad d_{Y}(\supp(f), \supp(g))>R.
$$
for all $f,g\in C_0(Y)$. The least such constant $R>0$ is the \emph{propagation} of $T$.
\end{definition}

\begin{remark}
Since $\pi$ is non-degenerate, it can be extended to a $*$-homomorphism $\bar\pi: C_b(Y) \to \maL_{C_0(X)}(E)$.
 In the previous definition, we may then equivalently use functions $f, g$ from $C_b(Y)$ and the extended representation $\bar\pi$.
\end{remark}



%

Recall the $C^*$-algebra $\left[\maL_{C_0(X)}(E)\right]^G$ of $G$-invariant adjointable operators on $E$ defined in the appendix. Recall also that $\maK_{C_0(X)}(E)$ is the ideal of $C_0(X)$-compact operators in the Hilbert module $E$.

\begin{definition}\label{Roe-Algebras}\
\begin{itemize}
\item The equivariant Roe-algebra $D^*_G(Y, E)$  is the norm closure (in $\maL_{C_0(X)}(E)$) of the space
$$
\{ T\in \left[\maL_{C_0(X)}(E)\right]^G,  T \text{ has finite propagation and } [T,\pi(f)]\in \maK_{C_0(X)}(E), \forall f\in C_0(Y)\}.
$$
\item The subspace $C^*_G(Y, E)$ is defined  as
$$
C^*_G(Y, E) := \{ T\in D^*_G(Y, E),  T\pi(f) \in \maK_{C_0(X)}(E), \forall f\in C_0(Y)\}.
$$
\end{itemize}
\end{definition}

\begin{remark}
Notice that by definition, finite propagation means  uniform finite propagation with respect to the $X$-variable.
\end{remark}

For any operator $T=(T_x)_{x\in X}$ of $D^*_G(Y, E)$, the operators $T_x$ all belong to the $C^*$-algebras $D^*(Y_x, E_x)$ and satisfy an obvious equivariance property.
If we consider  the $C^*$-algebras $D^*_X (Y, E)$  and $C^*_X (Y, E)$ defined using the pointwise groupoid structure of the space $X$, which are $G$-algebras,
then it is tempting to rather consider the $C^*$-algebras $D^*_X (Y, E)^G$  and $C^*_X (Y, E)^G$   composed of the $G$-invariant elements of  $D^*_X (Y, E)$ and $C^*_X (Y, E)$ respectively.
However,  and even for discrete countable groups, our definition \ref{Roe-Algebras}  better suits with the  Baum-Connes map as we shall see, see also \cite{HRII05}, \cite{PiazzaSchick}.

\begin{lemma}
For any Hilbert $G$-module $E$ with a non-degenerate $G$-equivariant $C_0(X)$-representation of the $G$-algebra $C_0(Y)$, $D^*_G(Y, E)$ is a (unital) $C^*$-algebra and $C^*_G(Y, E)$ is closed two-sided involutive ideal in $D^*_G(Y,E)$. Hence, denoting by $Q^*_G(Y, E)$ the quotient $C^*$-algebra,  we have the following short exact sequence of $C^*$-algebras:
$$
0\rightarrow C^*_G(Y, E)\longrightarrow D^*_G(Y, E)\longrightarrow Q^*_G(Y, E)\rightarrow 0.
$$
\end{lemma}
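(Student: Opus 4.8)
The plan is to verify directly that the three defining properties are preserved under the algebraic operations, following the classical Roe-algebra argument but being careful about the $G$-invariance condition and about the fact that ``finite propagation'' here means \emph{uniform} finite propagation in the $X$-variable.

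First I would check that $D^*_G(Y,E)$ is a $*$-subalgebra of $\maL_{C_0(X)}(E)$ before taking closures. The $G$-invariant operators $\left[\maL_{C_0(X)}(E)\right]^G$ already form a $C^*$-subalgebra, so it suffices to work inside it. For finite propagation: if $S$ has propagation $R_S$ and $T$ has propagation $R_T$, then $ST$ has propagation at most $R_S+R_T$ by the standard argument — insert an approximate unit $(u_i)$ from $C_0(Y)$ (using the extended representation $\bar\pi$ on $C_b(Y)$ as in the Remark after the Finite Propagation definition) between $S$ and $T$, decompose $\pi(f)ST\pi(g)$ using a partition subordinate to the metric, and use the triangle inequality for $d_Y$; the key point is that the propagation bound is a single constant valid on all fibers simultaneously, which is exactly the uniformity built into the definition. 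Sums obviously have propagation $\max(R_S,R_T)$, and $T^*$ has the same propagation as $T$ since $d_Y$ is symmetric and $\pi$ is a $*$-representation. For the ``pseudolocal'' condition $[T,\pi(f)]\in\maK_{C_0(X)}(E)$: this is closed under sums and adjoints trivially, and for products one writes $[ST,\pi(f)] = S[T,\pi(f)] + [S,\pi(f)]T$, both terms of which lie in the ideal $\maK_{C_0(X)}(E)$ because $\maL_{C_0(X)}(E)$ multiplies this ideal into itself. Thus the set in the first bullet of Definition~\ref{Roe-Algebras} is a unital $*$-algebra ($\Id$ is $G$-invariant, has zero propagation, and commutes with every $\pi(f)$), and its norm closure $D^*_G(Y,E)$ is a unital $C^*$-algebra — here one only needs that each of the three conditions is preserved under norm limits: finite propagation because if $T_n\to T$ with $\pi(f)T_n\pi(g)=0$ then $\pi(f)T\pi(g)=0$ for the \emph{same} $R$ whenever $T_n$ can be chosen with a common propagation bound (one must be slightly careful and note the closure is of the whole algebra, so one shows $D^*_G$ is the closure and the conditions on elements of $D^*_G$ need only hold in the limiting sense — the cleanest formulation is just to define $D^*_G$ as the closure and observe the closure of a $*$-algebra is a $C^*$-algebra), and $\maK_{C_0(X)}(E)$ is norm-closed.

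Next I would show $C^*_G(Y,E)$ is a closed two-sided $*$-ideal. It is norm-closed in $D^*_G(Y,E)$ because $\maK_{C_0(X)}(E)$ is norm-closed and the condition $T\pi(f)\in\maK_{C_0(X)}(E)$ for all $f\in C_0(Y)$ is preserved under norm limits. It is a $*$-subspace: closure under sums is clear, and if $T\pi(f)\in\maK$ for all $f$, then taking adjoints $\bar\pi(\bar f)T^*=(T\pi(f))^*\in\maK$, and since $f\mapsto\bar f$ is a bijection of $C_0(Y)$, also $T^*\in C^*_G(Y,E)$; one should also check $\pi(f)T\in\maK$, which follows since $\pi(f)T = (T^*\pi(\bar f))^*\in\maK$. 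For the ideal property: given $T\in C^*_G(Y,E)$ and $S\in D^*_G(Y,E)$, the product $ST$ satisfies $(ST)\pi(f) = S(T\pi(f))\in\maK_{C_0(X)}(E)$ since $\maK$ is an ideal in $\maL_{C_0(X)}(E)$; for $TS$ one writes $(TS)\pi(f) = T(S\pi(f)) = T(\pi(f)S) + T[S,\pi(f)]$, where $T\pi(f)S\in\maK$ because $T\pi(f)\in\maK$ and $\maK$ is an ideal, and $T[S,\pi(f)]\in\maK$ because $[S,\pi(f)]\in\maK$ by the defining pseudolocality of $D^*_G$ and $\maK$ is an ideal. Finally, with $Q^*_G(Y,E) := D^*_G(Y,E)/C^*_G(Y,E)$ the quotient $C^*$-algebra, the short exact sequence is immediate.

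The one genuinely delicate point — the ``main obstacle'' — is not any single algebraic closure property but rather checking that each of these operations respects the \emph{uniform} propagation condition and the $G$-equivariance simultaneously, i.e., that when we decompose a product using an approximate unit and a metric partition, the resulting estimates are fiberwise-uniform; this is where the continuity-of-the-field hypothesis on $E$ (so that $\pi$ extends to $\bar\pi: C_b(Y)\to\maL_{C_0(X)}(E)$, noted in the Remark) and the uniform properness axiom on $(Y,d_Y)$ are used. All the rest is a routine transcription of Higson--Roe's argument from the group case, with ``compact operators'' replaced by ``$C_0(X)$-compact operators'' throughout and one additional verification that $G$-invariance passes through products and adjoints, which it does because the unitaries $V_g$ intertwine the representation $\pi$ by the $G$-equivariance of $\pi$.
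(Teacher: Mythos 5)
Your proposal is correct and follows essentially the same route as the paper: the paper likewise reduces the lemma to showing that adjoints and compositions of finite-propagation operators again have finite propagation, proving the bound $R_{TS}\le R_S+R_T$ by inserting a compactly supported cutoff $\varphi$ (equal to $1$ on an $(R_T+\epsilon)$-neighbourhood of $\supp(f)$ and vanishing on an $(R_S+\epsilon)$-neighbourhood of $\supp(g)$, these sets being compact by the properness axiom) and writing $\pi(f)TS\pi(g)=\pi(f)T\pi(\varphi)S\pi(g)+\pi(f)T\bar\pi(1-\varphi)S\pi(g)$ --- the concrete form of your ``partition subordinate to the metric'' --- while declaring the remaining ideal verifications standard, which you simply spell out. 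One small slip to repair: from $T\pi(f)\in\maK_{C_0(X)}(E)$ for all $f$, taking adjoints only yields $\pi(\bar f)T^*\in\maK_{C_0(X)}(E)$, which is not the defining condition for $T^*\in C^*_G(Y,E)$; to conclude you must write $T^*\pi(f)=\pi(f)T^*+[T^*,\pi(f)]$ and invoke the pseudolocality of $T^*\in D^*_G(Y,E)$ (the same commutator trick you already use for the two-sided ideal property), after which the argument is complete.
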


\begin{proof}\
If we admit that $D^*_G(Y, E)$ is a unital $C^*$-algebra, then it is  clear that $C^*_G(Y, E)$ is a closed two-sided involutive ideal in $D^*_G(Y, E)$, as a consequence of the fact that $\maK_{C_0(X)}(E)$ is a closed two-sided involutive ideal  in $\maL_{C_0(X)}(E)$. Hence, we only need to check that $D^*_G(Y, E)$ is a  (unital) $C^*$-algebra. Moreover, by standard arguments, only the proof that composition (and adjoint) of finite propagation operators are finite propagation operators. For elements $S,T\in D^*_G(Y,E)$ having finite propagations $R_S$ and $R_T$ respectively,  the propagation of the adjoint $T^*$ coincides with that of $R_T$, and the operator $ST$ also has finite propagation, in fact $\leq R_S+R_T$.
 The proof is standard.  Let for instance $f, g\in C_c(Y)$ be such that $d_Y(\supp (f), \supp (g)) > R_S+R_T$ and set $d_Y(\supp (f), \supp (g)) - (R_S+R_T) =2\ep >0$. Define the subsets $U_f=\{y\in Y, d_Y (y, \supp (f)) \leq R_T+\ep\}$ and $U_g=,\{y\in Y, d_Y (y, \supp (g)) \leq R_S+\ep \}$.  From the properness of  $d=d_Y$, we know that the subsets $U_f$ and $U_g$ are compact (disjoint) subspaces of $Y$. Let then $\varphi$ be a continuous compactly supported function on $Y$ such that
$$
\varphi\vert_{U_f} = 1 \text{ and } \varphi\vert_{U_g} = 0.
$$
So, $d_Y (\supp (f), \supp (1-\varphi)) \geq R_T+\ep > R_T$ and $d_Y (\supp (g), \supp (\varphi)) \geq R_S+\ep >R_S$. Hence, we may write
$$
\pi (f) TS\pi (g) = \pi(f) T \pi(\varphi) S \pi(g) + \pi(f) T \bar\pi(1-\varphi) S \pi(g) = 0.
$$
Therefore, $R_{TS}\leq R_T+R_S$ as announced.

%
%
%
%
%
\end{proof}

\begin{example}\label{Actions}
Take $G=X\rtimes \Gamma$ to be an action groupoid, where $\Gamma$ is a countable discrete group which acts by homeomorphisms on  a compact space $X$.
Consider a space $Y$   of the form $Y=X\times Z$, where $Z$ is a locally compact $\Gamma$-proper space, and take $E$ to be $C(X)\otimes H$ for a fixed unitary $\Gamma$-representation Hilbert space $H$ in which we also have a non-degenerate  representation of $C_0(Z)$. Then the $C^*$-algebra $\maL_{C(X)} (C(X)\otimes H)$ coincides with the  $C^*$-algebra $C(X, \maL(H)_{*-str})$ of  $*$-strongly continuous fields of operators. The Roe algebra
$D^*_G(Y, E)$ is then the closure of the space of (uniform) finite propagation $\Gamma$-equivariant  elements of $C(X, \maL(H)_{*-str})$.
\end{example}

\subsection{Relation with the groupoid  $C^*$-algebra}\label{MoritaIso}

Denote again by $\rho:Y\rightarrow X$ the open surjective continuous anchor map for the proper $G$-space $Y$.  Associated with the proper $G$-space $Y$, there is the  locally compact proper $G$-space $Y\times_\rho Y:=\{(y, y')\in Y^2, \rho(y)=\rho(y')\}$ which is also endowed with the compatible extra-structure of a groupoid for the rules
$$
s (y, y') =y', r(y, y')= y \text{ so that } (y, y') (y', y'') = (y, y'') \text{ and }(y, y') g = (yg, y'g)\text{ if } r(g)=\rho(y).
$$
There exists a {{full $\rho$-system} on   $Y$ \cite{Blanchard, Williams}, i.e. a  family  $(\mu_x)_{x\in X}$ where:
\begin{itemize}
\item  $\mu_x$ is a Radon measure on $\rho^{-1}(x)=Y_x$ whose support  is  $Y_x =\rho^{-1}(x)$.
\item (continuity)\; \; The map $x\mapsto \int_{Y_x} f(y) d\mu_x(y)$ is continuous for any $f\in C_c(Y)$.
\end{itemize}
{{A choice of such system allows to construct  a {\em{continous}} field of Hilbert spaces over $X$, or equivalently a $C_0(X)$-Hilbert module $\maE_Y$, as usual.}}
The Hilbert $C_0(X)$-module  $\maE_Y$ corresponds to the {\em{continuous}} field of Hilbert spaces $(L^2(Y_x, \mu_x))_{x\in X}$. This is more precisely defined  by completing the pre-Hilbert $C_c(X)$-module
 $C_c(Y)$ of compactly supported continuous functions on $Y$ \cite{GengouxTuXu}. Recall that
$$
(\xi f)(y)=\xi(y)f(\rho(y)), \quad f\in C_c(X), \xi\in C_c(Y);
$$
and the $C_c(X)$-valued inner product is given by:
$$
<\eta,\xi>(x) = \int_{y\in Y_x} <\eta(y),\xi(y)> d\mu_{x}(y), \quad \eta,\xi\in C_c(Y).
$$
Notice that the map $y\mapsto <\eta (y), \xi (y)>$ then belongs to $C_c(Y)$ and hence by the continuity property of the Haar system, we deduce that $<\eta, \xi>$ belongs to $C_c(X)$. So we get in this way that the completion  $\maE_Y$  of $C_c(Y)$ with respect to the above pre-Hilbert $C_c(X)$-module structure, is a Hilbert $C_0(X)$-module. Notice that, under our assumptions, none of the Hilbert spaces $L^2(Y_x, \mu_x)$ is trivial and hence by a classical argument, the Hilbert module $\maE_Y$ is a full Hilbert module.


%
%

Using the properness of the $G$-action on $Y$,  it is easy to ensure in addition that the $\rho$-system $(\mu_x)_{x\in X}$  be  $G$-equivariant {{(see \cite{Williams}[Proposition 2.5])}}, i.e.
\begin{itemize}
\item ($G$-equivariance) \;\; For any $g\in G$ and $f\in C_c(Y)$, we have
$$
\int_{Y_{r(g)}} f(u)d\mu_{r(g)}(y) = \int_{Y_{s(g)}} f(y'g^{-1}) d\mu_{s(g)}(y').
$$
\end{itemize}
We shall also call such $\rho$-system an equivariant Haar system. Then the following statement is clear.

\begin{proposition}\label{EY}
The module $\maE_Y$ is a Hilbert $G$-module and the representation $\pi_Y: C_0(Y)\rightarrow \maL_{C_0(X)}(\maE_Y)$ given by multiplication operators is a $G$-equivariant non-degenerate representation.
\end{proposition}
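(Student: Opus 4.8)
The plan is to handle the two assertions in turn: first upgrade the Hilbert $C_0(X)$-module $\maE_Y$ to a Hilbert $G$-module by exhibiting its structure unitary, then verify that the multiplication representation $\pi_Y$ intertwines the $G$-actions and is non-degenerate. Since $\maE_Y$ has already been constructed as the completion of $C_c(Y)$ for its $C_c(X)$-valued inner product, what is missing for the first assertion is a unitary $V\colon s^*\maE_Y\to r^*\maE_Y$ of Hilbert $C_0(G)$-modules, restricting to the identity over $X\subseteq G^{(1)}$ and satisfying the cocycle relation over $G^{(2)}$. I would define $V$ on the dense submodule $C_c(Y\rtimes_s G)$ of $s^*\maE_Y$ by the same formula as for $\alpha_Y$ in Proposition~\ref{isopullback}, namely $(V\eta)(y,g)=\eta(yg,g)$ for $(y,g)\in Y\rtimes_r G$; fibrewise this is the family $(V_g\xi)(y)=\xi(yg)$ with $\xi\in L^2(Y_{s(g)},\mu_{s(g)})$ and $y\in Y_{r(g)}$, where $\rho(y)=r(g)$ forces $yg\in Y_{s(g)}$ by Definition~\ref{Gspace}.

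The core of the argument is that $V$ is isometric with dense range, hence extends to a unitary. The decisive input is the $G$-equivariance of the Haar system $(\mu_x)_{x\in X}$ arranged just above: that identity says precisely that the orbit map $y\mapsto yg$ pushes $\mu_{r(g)}$ forward to $\mu_{s(g)}$, so each $V_g$ is a genuine unitary $L^2(Y_{s(g)},\mu_{s(g)})\to L^2(Y_{r(g)},\mu_{r(g)})$, and globally $\langle V\eta,V\zeta\rangle_{r^*\maE_Y}(g)=\langle\eta,\zeta\rangle_{s^*\maE_Y}(g)$ for $\eta,\zeta\in C_c(Y\rtimes_s G)$ — the computation being essentially the one behind Proposition~\ref{isopullback}, now transporting the measure as well. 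Since $(y,g)\mapsto(yg,g)$ is a homeomorphism of $Y\rtimes_r G$ onto $Y\rtimes_s G$ with inverse $(y',g)\mapsto(y'g^{-1},g)$, the map $V$ carries $C_c(Y\rtimes_s G)$ bijectively onto $C_c(Y\rtimes_r G)$, so it is an inner-product-preserving bijection of dense submodules and hence extends to a unitary $s^*\maE_Y\to r^*\maE_Y$ of Hilbert $C_0(G)$-modules (the continuity axiom of the Haar system being what makes these pullbacks genuine continuous fields). The relation $V_{g_1g_2}=V_{g_1}V_{g_2}$ over $G^{(2)}$ and the normalisation $V|_X=\id$ then follow at once from the associativity and unit axioms for the action $\lambda$ in Definition~\ref{Gspace}.

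For the second assertion, $\pi_Y(f)$ acts fibrewise by multiplication by $f|_{Y_x}$, and a one-line computation gives $(V_g\pi_Y(f)V_g^*\xi)(y)=f(yg)\,\xi(y)$ for $\xi\in L^2(Y_{r(g)})$; comparing with $\alpha_Y(\varphi)(y,g)=\varphi(yg,g)$, this says exactly that $\pi_Y$ intertwines the $G$-algebra structure $\alpha_Y$ on $C_0(Y)$ with the conjugation action $g\mapsto\Ad(V_g)$ on $\maL_{C_0(X)}(\maE_Y)$, i.e. $\pi_Y$ is $G$-equivariant. Non-degeneracy is immediate: given $\xi\in C_c(Y)$, choose $f\in C_c(Y)$ with $f\equiv 1$ on $\Supp\xi$, so that $\pi_Y(f)\xi=\xi$; hence $\pi_Y(C_0(Y))\maE_Y$ contains the dense subspace $C_c(Y)$. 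Essentially the whole proof is routine bookkeeping — which is why the text can call the statement clear — and the only point that is not purely formal is that $V$ survives completion as a unitary; this is precisely where one uses that the $\rho$-system was chosen $G$-invariant (a choice made possible by the properness of the action), after which everything reduces to Proposition~\ref{isopullback} together with the elementary fact that $y\mapsto yg$ is a measure isomorphism $Y_{r(g)}\to Y_{s(g)}$.
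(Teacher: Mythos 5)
Your proposal is correct and follows essentially the same route as the paper: the same formula $(V\eta)(y,g)=\eta(yg,g)$ on the dense submodules $C_c(Y\rtimes_s G)\subset s^*\maE_Y$ and $C_c(Y\rtimes_r G)\subset r^*\maE_Y$, unitarity via the $G$-equivariance of the $\rho$-system, the cocycle identity from the action axioms, and the same one-line conjugation computation identifying $\Ad(V)\circ s^*\pi_Y$ with $r^*\pi_Y\circ\alpha_Y$. The only cosmetic differences are that the paper exhibits $V^*$ explicitly rather than arguing isometry plus dense range, and it writes out the cocycle check on $G^{(2)}$ in coordinates, while treating non-degeneracy as immediate.
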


\begin{proof}
That $\pi_Y$ is non-degenerate is clear. Recall the spaces
$$
Y\rtimes_s G:= \{ (y, g)\in Y\times G, \rho (y) = s(g)\}\text{ and }Y\rtimes_r G:= \{ (y, g)\in Y\times G, \rho (y) = r(g)\}.
$$
There is an isomorphism of Hilbert $C_0(G)$-modules between $s^*\maE_Y$ and the completion of $C_c(Y\rtimes_s G)$ with respect to the expected structures. The similar statement holds for $r^*\maE_Y$ and the completion of $C_c(Y\rtimes_r G)$. Thus, admitting these identifications, we can define  the unitary $V: s^*\maE_Y \rightarrow r^*\maE_Y$ which will automatically be a $C_0(G)$-linear map by setting for any
continuous compactly supported function $\eta$ on $Y\rtimes_s G$:
$$
(V\eta) (y, g) := \eta (yg, g)
$$
The pre-Hilbert $C_c(G)$-module structure  on $C_c(Y\rtimes_s G)$ (and similarly on $C_c(Y\rtimes_r G)$) is given  for  $\xi, \xi_1, \xi_2\in C_c(Y\rtimes_s G)$ and $ f\in C_c(G)$ by
$$
(\xi f) (y, g) := \xi (y, g) f (g) \text{ and } <\xi_1, \xi_2 >   (g):= \int_{Y_{s(g)}}\overline{\xi_1(y,g)}\xi_2(y,g) \; d\mu_{s(g)}(y).
$$
A straightforward verification then shows that the natural  map
$$
C_c(Y) \otimes C_c(G) \longrightarrow C_c(Y\times G),
$$
induces the above identifications of $s^*\maE_Y$ and $r^*\maE_Y$. We check using the $G$-invariance of the measures $(\mu_x)_{x\in X}$ that $(V^*\xi ) (y, g)= \xi(y g^{-1}, g)$ and hence that $V$  extends to a  unitary operator. Again by direct inspection, we obtain $\pi_{ij}^*s^*\maE_Y$ and $\pi_{ij}^*r^*\maE_Y$ by completing respectively with respect to the appropriate $C_0(G^{(2)})$-valued inner product the space of continuous compactly supported functions on respectively  the spaces
$$
Y\rtimes_{s, \pi_{ij}} G^{(2)} = \{(y, g_1, g_2)\in Y\times G^{(2)}, s\pi_{ij} (g_1, g_2)= \rho(y)\}
$$
and
$$
 Y\rtimes_{r, \pi_{ij}} G^{(2)} = \{(y, g_1, g_2)\in Y\times G^{(2)}, r\pi_{ij} (g_1, g_2)= \rho(y)\}.
$$
Hence we can write
$$
V_{01} \eta (y, g_1, g_2)=\eta (yg_1, g_1, g_2),  V_{12}\eta (y, g_1, g_2)=\eta (yg_2, g_1, g_2), V_{02} \eta (y, g_1, g_2)= \eta (yg_1g_2, g_1, g_2),
$$
which shows that $V$ satisfies the allowed relation for $\maE_Y$ to be a Hilbert $G$-module.

Recall on the other hand that the structure of $G$-algebra of $C_0(Y)$ is given by the similar $C_0(G)$-isomorphism $\alpha_Y: \varphi\mapsto [(y,g)\mapsto \varphi (yg, g)]$. Now
 for any $\varphi\in C_c(Y\rtimes_s G)$ and any $\xi\in C_c(Y\rtimes_r G)$:
$$
\left(V\circ (s^*\pi_Y)(\varphi) \circ V^*\right) (\xi) (y , g) = \varphi (yg, g) \xi (y, g) = (\alpha_Y\varphi)(y,g) \xi (y, g) = (r^*\pi_Y) (\alpha_Y(\varphi)) (\xi) (y,g),
$$
which shows that $\pi_Y$ is a $G$-equivariant representation.
\end{proof}

So we get using the specific Hilbert $G$-module $\maE_Y$ the dual $C^*$-algebra  $D^*_G(Y, \maE_Y)$  together with its closed two-sided involutive ideal  $C^*_G(Y, \maE_Y)$ and the quotient $C^*$-algebra $Q^*_G(Y, \maE_Y)$. {{On the other hand,  using the proper (and free) $G$-space $Y':=Y\rtimes_r G$, we also obtain the Hilbert $G$-module $\maE_{Y'}$  which corresponds to the field of Hilbert spaces over $X$ whose fiber at $x\in X$ is $L^2(Y_x, \mu_x)\otimes \ell^2(G^x)$, with the obvious extended representation $\pi_Y\otimes \id$ of $C_0(Y)$ in $\maE_{Y'}$. Hence, we also get the corresponding Roe $C^*$-algebras $D^*_G(Y, \maE_{Y'})$, $C^*_G(Y, \maE_{Y'})$ and $Q^*_G(Y, \maE_{Y'})$.}}
{{Notice indeed that the anchor map for the $G$-space  $Y'$ is $\rho'=\rho\circ p_1$ with $p_1:Y\rtimes_r G\to Y$ being the first projection. On the other hand, recall as well the groupoid structure on $Y'$ which is the crossed product structure, so with ${r} (y, g) =y$ and ${s} (y, g)= yg$ and with unit space $Y$. This structure is pulled back from $G$ and we have commutative diagrams where $\pi_2$ is the second projection ($\pi_2 (y,g)=g$)}}:

\[
\begin{CD}
Y\rtimes_r G @> {{s}, {r}}   >>Y \\
@V\pi_2 VV     @VV \rho V\\
G @> {s, r} >> X
\end{CD}
\]

We can now state {{(Compare \cite{Roe})}}:

\begin{theorem}\label{Morita}
Assume  that $Y$ is a proper $G$-space which is $G$-compact. Then the $C^*$-algebras $C^*_{\redg}(G)$ and $C^*_G(Y,\maE_{Y'})$ are Morita equivalent.
\end{theorem}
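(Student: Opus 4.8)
The plan is to exhibit an explicit Morita equivalence bimodule between $C^*_{\redg}(G)$ and $C^*_G(Y,\maE_{Y'})$, built from the field of Hilbert spaces $\left(L^2(Y_x,\mu_x)\otimes \ell^2(G^x)\right)_{x\in X}$ together with a cutoff function $c$ for the $G$-compact space $Y$ (which exists and is compactly supported by the remark after Definition~\ref{cutoff}). First I would observe that $\maE_{Y'}$ is naturally a $C^*_r(G)$-module: the groupoid $Y'=Y\rtimes_r G$ is a proper \emph{free} $G$-space, so $Y'\times_\rho Y'$ is a groupoid Morita equivalence $G \sim Y'/G$, and in fact $\maE_{Y'}$ carries a right $C^*_r(G)$-action coming from the $\ell^2(G^x)$-factors and the right multiplication in the crossed-product groupoid. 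I would then define the candidate bimodule $\maM$ to be a suitable completion of $C_c(Y')=C_c(Y\rtimes_r G)$, viewed with a left action of $C^*_G(Y,\maE_{Y'})$ by the finite-propagation/locally-compact operators and a right action of $C^*_r(G)$, and with the two valued inner products: the $C^*_r(G)$-valued one $\langle\xi,\eta\rangle_{C^*_r G}(g) = \sum \int_{Y_{s(g)}} \overline{\xi(y,\cdot)}\,\eta(\cdot)\,d\mu$ (the standard groupoid-module pairing), and the $C^*_G(Y,\maE_{Y'})$-valued one given by the rank-one-type operators $\theta_{\xi,\eta} = |\xi\rangle\langle\eta|$ weighted by the cutoff $c$, which one must check lands in $C^*_G(Y,\maE_{Y'})$.

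The key steps, in order, are: (1) verify that $\maM$, completed in the $C^*_r(G)$-valued inner product, is a full right Hilbert $C^*_r(G)$-module — fullness uses that $\rho$ is surjective and the $\rho$-system has full support; (2) identify the algebra of $C^*_r(G)$-compact operators $\maK_{C^*_r(G)}(\maM)$ with $C^*_G(Y,\maE_{Y'})$ — this is the crux: one shows that the cutoff-weighted rank-one operators $\theta_{c\xi,\eta}$ are $G$-invariant, have finite propagation (because $c$ has $G$-compact support so $\Supp(c)\cdot G$ meets any fiber in a bounded set, forcing a uniform propagation bound), and are locally compact, hence lie in $C^*_G(Y,\maE_{Y'})$; conversely every element of $C^*_G(Y,\maE_{Y'})$ is approximated by such finite-rank operators, using a partition of unity subordinate to the cutoff together with the local-compactness condition $T\pi(f)\in\maK$; (3) conclude that $C^*_G(Y,\maE_{Y'}) \cong \maK_{C^*_r(G)}(\maM)$ is Morita equivalent to $C^*_r(G)$, and since $\maM$ is full, this is a genuine (strong) Morita equivalence, giving the stated $K$-theory isomorphism $\maM_*$.

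I expect step (2), the identification $\maK_{C^*_r(G)}(\maM)\cong C^*_G(Y,\maE_{Y'})$, to be the main obstacle. The subtlety is matching the \emph{uniform} finite-propagation condition (propagation measured uniformly in $x\in X$, per the remark after Definition~\ref{Roe-Algebras}) with the compactness of the operators over $C^*_r(G)$: one needs the $G$-compactness of $Y$ precisely to control propagation uniformly, via the compact support of the cutoff and the properness axiom in the definition of a $G$-family of proper metric spaces. A secondary technical point is checking that the left action by $C^*_G(Y,\maE_{Y'})$ and the right action by $C^*_r(G)$ genuinely commute and that the two inner products satisfy the imprimitivity compatibility $\langle\xi,\eta\rangle_{C^*_G}\cdot\zeta = \xi\cdot\langle\eta,\zeta\rangle_{C^*_r G}$; this should follow from a direct computation on $C_c(Y\rtimes_r G)$ using the $G$-equivariance of the $\rho$-system, but care is needed with the $\ell^2(G^x)$ factors. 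The comparison with Roe's classical argument (as the "Compare \cite{Roe}" suggests) provides the template: the groupoid case replaces the covering $\widehat{M}\to M$ by the principal $G$-bundle $Y'\to Y'/G$, and the role of $\ell^2(\Gamma)$ is played by the field $\ell^2(G^x)$.
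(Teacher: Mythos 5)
Your route is essentially the paper's: the bimodule you call $\maM$ is the completion $L^2_G(Y')$ of $C_c(Y\rtimes_r G)$ constructed there, your step (2) is exactly Proposition \ref{Cstariso} (identifying $C^*_G(Y,\maE_{Y'})$ with $\maK_{C^*_{\redg}(G)}(L^2_G(Y'))$ via the averaged rank-one kernels and the cutoff/averaging identity $T=\Av(\pi_Y(c)T)$), and the equivalence then follows because a full Hilbert module implements a Morita equivalence between the coefficient algebra and its compacts. One small clarification on your phrasing of step (2): a cutoff-weighted rank-one operator $\theta_{c\xi,\eta}$ on $\maE_{Y'}$ is not itself $G$-invariant; what is invariant is the image under $\Phi_*$ of a rank-one operator of the $C^*_{\redg}(G)$-module, whose kernel is the sum $\sum_{g}\xi(yg^{-1})\overline{\eta(y'g^{-1})}$, i.e.\ the $G$-average. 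The cutoff enters only in the converse approximation.

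The one justification that would fail as written is the fullness claim in your step (1). Surjectivity of $\rho$ and full support of the $\rho$-system give fullness of $\maE_{Y'}$ as a $C_0(X)$-module, which is a much weaker statement than fullness of $L^2_G(Y')$ over $C^*_{\redg}(G)$. Over the reduced groupoid algebra the inner products $\langle\eta,\xi\rangle(g)=\int_{Y_{r(g)}}\overline{\eta(y)}\,\xi(yg)\,d\mu_{r(g)}(y)$ need not span a dense ideal when the action has isotropy: for a finite group acting trivially on a point these inner products are constant functions on the group, so $L^2_G(Y)$ is not full even though $\rho$ is surjective and the measures have full support. This is precisely why the theorem is stated for $\maE_{Y'}$ rather than $\maE_Y$ (compare Remark \ref{semi-ample}). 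The correct source of fullness is the freeness and properness of the $G$-action on $Y'=Y\rtimes_r G$ — the groupoid-equivalence fact you allude to at the start — for which the paper cites \cite{MuhlyRenaultWilliams}[Proposition 2.10]. With that substitution your argument coincides with the paper's proof.
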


{{We shall more precisely identify the $C^*$-algebra $C^*_G(Y,\maE_{Y'})$ with the $C^*$-algebra of compact operators of some full Hilbert $C^*_{\redg} (G)$-module $L^2_G(Y')$. Let us first give some results which hold for the two Hilbert modules $\maE_Y$ and $\maE_{Y'}$. For simplicity, we give them for $Y$ and explain later on the needed modifications for $Y'$.}} The Hilbert module $L^2_G(Y)$ is the Connes-Skandalis Hilbert module  obviously extended to the non-smooth case, see \cite{ConnesSkandalis, Roe}.
The space $C_c(Y)$ of continuous compactly supported functions on $Y$ can indeed be endowed with the pre-Hilbert module structure over the convolution compactly supported algebra $C_c(G)$ of $G$. The rules are defined by:
\begin{itemize}
\item $
(\xi f)(y): = \sum_{\gamma\in G_{\rho(y)}} \xi(y\gamma^{-1})f(\gamma),  \text{ for }f\in C_c(G), \xi\in C_c(Y).
$
\item
$
<\eta,\xi>(g): = \int_{y\in Y_{r(g)}} {\overline{\eta(y)}} \xi(yg)  d\mu_{r(g)}(y), \text{ for }\eta,\xi\in C_c(Y), g\in G.
$
\end{itemize}
Passing to  completions, we obtain our Hilbert $C^*_{\redg}(G)$-module  $L^2_G(Y)$. The similar construction yields the Hilbert $C^*_{\redg}(G)$-module  $L^2_G(Y')$.

Recall on the other hand the regular representation $\lambda=  (\lambda_x)_{x\in X}$ for the  groupoid $G$. So  $\lambda_x : C^*_{\redg}(G)\rightarrow \maL(\ell^2(G_x))$ is given on the dense subalgebra $C_c(G)$ by:
$$
\lambda_x(f)(\xi)(g):= \sum_{g'\in G_x} f(g {g'}^{-1})\, \xi(g'), \quad \text{ for }f\in C_c(G), \xi\in C_c(G_x)\text{ and } g\in G_x.
$$

The space of arrows $G^{(1)}$ is itself an interesting example of a proper right $G$-space $Y$  which is moreover a free $G$-space. The anchor map here is the source map $\rho_G=s$ which is surjective and open (since $G$ is \'etale) so that the action reduces to the composition on $G^{(2)}$. Moreover, the Haar system here corresponds to the counting measures on each $G_x$.
We get with this example the (full) Hilbert $C_0(X)$-module  $\maE_G$ when we simply specify $Y=G^{(1)}$. Then  $\maE_G$ is associated with the continuous field of Hilbert spaces $(\ell^2(G_x))_{x\in X}$. Notice though that here the fibers of the anchor map are discrete and that the space $G^{(1)}$ is $G$-compact only when $X$ is compact.

\begin{lemma}
The regular representation yields an (injective) $*$-homomorphism $\lambda : C^*_{\reg} G\rightarrow \maL_{C_0(X)}(\maE_G)$ which is valued in the space of $G$-invariant operators.
\end{lemma}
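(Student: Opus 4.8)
The plan is to establish, in order, three things: \textbf{(a)} for $f\in C_c(G)$ the fibrewise operators $\lambda_x(f)$ on $\ell^2(G_x)$ assemble into a densely defined, $C_0(X)$-linear, formally adjointable operator $\lambda(f)$ on $\maE_G$; \textbf{(b)} $\|\lambda(f)\|_{\maL_{C_0(X)}(\maE_G)}=\sup_{x\in X}\|\lambda_x(f)\|=\|f\|_{C^*_{\reg}(G)}$, so that $\lambda$ is isometric and extends uniquely to an injective $*$-homomorphism on all of $C^*_{\reg}(G)$; and \textbf{(c)} each $\lambda(f)$ intertwines the structure unitaries $V_g$ of the Hilbert $G$-module $\maE_G$, hence lands in $[\maL_{C_0(X)}(\maE_G)]^G$. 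The only genuinely non-formal point will be the continuity statement buried in \textbf{(a)}, which is where the étale hypothesis is really used.

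For \textbf{(a)} I would work on the dense pre-Hilbert $C_c(X)$-submodule $C_c(G^{(1)})$ of $\maE_G$: specializing the construction preceding Proposition~\ref{EY} to $Y=G^{(1)}$, $\rho=s$ and the counting measures, the module action is $(\xi a)(g)=\xi(g)a(s(g))$ and the inner product is $\langle\eta,\xi\rangle(x)=\sum_{g\in G_x}\overline{\eta(g)}\,\xi(g)$. Set $(\lambda(f)\xi)(h):=\sum_{h'\in G_{s(h)}}f(h h'^{-1})\,\xi(h')$ for $\xi\in C_c(G^{(1)})$. The sum is finite because $G$ is étale, so $G_{s(h)}=s^{-1}(s(h))$ is discrete and closed, while $\supp\xi$ is compact; the support of $\lambda(f)\xi$ is contained in $\supp(f)\cdot\supp(\xi)$, the image of a compact part of $G^{(2)}$ under the continuous multiplication, hence compact; and continuity of $\lambda(f)\xi$ is checked by passing to local bisections, where the defining sum is locally a finite sum of continuous functions. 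Then $C_0(X)$-linearity is immediate since $s(h')=s(h)$ for $h'\in G_{s(h)}$, and a routine reindexing of the double sum, using $f^*(hg^{-1})=\overline{f(gh^{-1})}$ with $f^*(g):=\overline{f(g^{-1})}$, gives $\langle\lambda(f)\eta,\xi\rangle=\langle\eta,\lambda(f^*)\xi\rangle$. By construction $\lambda(f)$ restricts on the fibre over $x$ to $\lambda_x(f)$.

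For \textbf{(b)} I would invoke the standard fact that, for the Hilbert $C_0(X)$-module attached to a continuous field of Hilbert spaces, a bounded continuous field of operators $(T_x)_x$ defines an adjointable operator of norm exactly $\sup_x\|T_x\|$: the bound $\le$ follows from $\|T\xi\|^2=\|\langle T\xi,T\xi\rangle\|_\infty=\sup_x\|T_x\xi_x\|^2$, and $\ge$ by localizing a near-maximizing unit vector of some $\ell^2(G_{x_0})$ against a bump function in $C_c(X)$. Applied with $T_x=\lambda_x(f)$ this gives $\|\lambda(f)\|=\sup_x\|\lambda_x(f)\|$, which is precisely the reduced norm $\|f\|_{C^*_{\reg}(G)}$ by definition of the latter via the left regular representations $\lambda_x$. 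Since each $\lambda_x$ is a $*$-representation, $\lambda$ is multiplicative and $*$-preserving on $C_c(G)$; being isometric, it extends uniquely to an isometric, in particular injective, $*$-homomorphism $\lambda:C^*_{\reg}(G)\to\maL_{C_0(X)}(\maE_G)$.

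For \textbf{(c)}, as in the proof of Proposition~\ref{EY} the Hilbert $G$-module structure of $\maE_G$ is given by the unitaries $V_g:\ell^2(G_{s(g)})\to\ell^2(G_{r(g)})$, $(V_g\xi)(h)=\xi(hg)$ for $h\in G_{r(g)}$. For $f\in C_c(G)$, $\xi\in\ell^2(G_{s(g)})$, $h\in G_{r(g)}$ one computes $(V_g\lambda_{s(g)}(f)\xi)(h)=\sum_{h'\in G_{s(g)}}f(hg\,h'^{-1})\,\xi(h')$ and $(\lambda_{r(g)}(f)V_g\xi)(h)=\sum_{k\in G_{r(g)}}f(hk^{-1})\,\xi(kg)$, and the substitution $h'=kg$, a bijection $G_{r(g)}\to G_{s(g)}$ with $hk^{-1}=hg\,h'^{-1}$, identifies the two sums. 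Hence $V_g\,\lambda_{s(g)}(f)=\lambda_{r(g)}(f)\,V_g$ for every $g\in G$; this passes by density and continuity to all of $C^*_{\reg}(G)$, so $\lambda$ is valued in $[\maL_{C_0(X)}(\maE_G)]^G$. As flagged above, the main obstacle is the continuity claim inside \textbf{(a)}, namely that $\lambda(f)\xi$ is again a continuous (hence compactly supported) section, i.e. that $(\lambda_x(f))_x$ is a \emph{continuous} field of operators rather than just a bounded one; étaleness makes the convolution sum locally finite and, in a bisection chart, a finite sum of continuous functions, which settles it. The passage from the dense submodule to all of $\maE_G$ is then taken care of by the norm identity of \textbf{(b)}, and everything else is bookkeeping with convolution formulas.
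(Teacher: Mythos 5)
Your proof is correct and, on the point the paper actually argues (the $G$-invariance), it follows essentially the same route: the identity $r^*\lambda(k)\circ V_G=V_G\circ s^*\lambda(k)$ is obtained in the paper by the reindexing $g=g'g_2$ of the convolution sum, which is exactly your substitution $h'=kg$ carried out fibrewise with the unitaries $V_g$ instead of at the level of the pullback modules $s^*\maE_G$, $r^*\maE_G$ — an equivalent formulation since adjointable operators on these modules are determined by their fibres. Your parts (a) and (b) merely supply the routine well-definedness, norm and injectivity verifications that the paper leaves implicit (the reduced norm being defined through the representations $\lambda_x$), so they add detail rather than a genuinely different argument.
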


\begin{proof}
Denote by $V_G$ the unitary of $\maE_G$ which defines the $G$-action. Recall that $V_G$ is induced by the map
$$
V_G: C_c(G\rtimes_s G) \rightarrow C_c(G^{(2)}) \text{ given by } V_G \varphi (g_1, g_2) = \varphi (g_1g_2, g_2), \quad (g_1, g_2)\in G^{(2)}.
$$
Here $G\rtimes_s G= \{(g, g')\in G^2, s(g)= s(g')\}$. But for $k\in C_c(G)$, $\varphi\in C_c(G\rtimes_s G)$ and $(g_1,g_2)\in G^{(2)}$ we can write
\begin{eqnarray*}
\left(V_G\circ s^*\lambda (k)\right) (\varphi) (g_1, g_2) & = & s^*\lambda(k)(\varphi) (g_1g_2, g_2)\\
& = & \sum_{g\in G_{s(g_2)}} k(g_1g_2g^{-1}) \varphi (g, g_2)
\end{eqnarray*}
On the other hand,
\begin{eqnarray*}
\left(r^*\lambda (k)\circ V_G\right) (\varphi) (g_1, g_2) & = &  \sum_{g'\in G_{s(g_1)}} k(g_1 {g'}^{-1}) (V_G\varphi) (g', g_{{2}})\\
& = &  \sum_{g'\in G_{s(g_1)}} k(g_1 {g'}^{-1}) \varphi (g'g_2, g_{{2}}).
\end{eqnarray*}
Setting $g'g_2=g$ in this last expression gives the equality
$$
r^*\lambda (k)\circ V_G  = V_G\circ s^*\lambda (k)
$$
\end{proof}

The operator $\id\otimes_{\lambda} V_G$ denotes the well defined $C_0(X)$-adjointable operator which is a unitary from the Hilbert module $L^2_G(Y)\otimes_{s^*\lambda} s^*\maE_G$ to the Hilbert module $L^2_G(Y)\otimes_{r^*\lambda} r^*\maE_G$. Now, we have by definition
$$
s^*(L^2_G(Y)\otimes_{\lambda} \maE_G) \simeq L^2_G(Y)\otimes_{s^*\lambda} s^*\maE_G \text{ and } r^*(L^2_G(Y)\otimes_{\lambda} \maE_G)\simeq  L^2_G(Y)\otimes_{r^*\lambda} r^*\maE_G.
$$

\begin{proposition}\label{tensorisom}
We have an isometric $*$-isomorphism of $G$-Hilbert $C_0(X)$-modules:
$$
\Phi\; : \; L^2_G(Y)\otimes_{\lambda}\maE_G\stackrel{\cong}{\longrightarrow} \maE_Y.
$$
which is given  for $\zeta\in C_c(Y), \xi\in C_c(G)$, and $y\in Y$ by:
$$
\Phi (\zeta\otimes \xi)(y):=\sum_{g\in G_{\rho(y)}} \; \zeta(yg^{-1})\, \xi(g).
$$
{{In the same way, we have the similar isometric $*$-isomorphism of $G$-Hilbert $C_0(X)$-modules $
\Phi :L^2_G(Y')\otimes_{\lambda}\maE_G \cong \maE_{Y'}$ given by the similar formula.}}
\end{proposition}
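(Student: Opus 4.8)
The plan is to define $\Phi$ on the algebraic balanced tensor product of the dense subspaces $C_c(Y)\subset L^2_G(Y)$ and $C_c(G)\subset\maE_G$ by the stated formula, and then to verify the three facts that together force $\Phi$ to extend to a unitary of Hilbert $C_0(X)$-modules: (i) $\Phi(\zeta\otimes\xi)$ actually lies in $C_c(Y)$; (ii) $\Phi$ is compatible with the balancing over $C_c(G)$, i.e. $\Phi((\zeta f)\otimes\xi)=\Phi(\zeta\otimes\lambda(f)\xi)$ for every $f\in C_c(G)$; and (iii) $\Phi$ preserves the $C_0(X)$-valued inner products. Since the norm of an interior tensor product is recovered from its $C_0(X)$-valued inner product and both modules are complete, (ii) and (iii) yield a well-defined isometry $L^2_G(Y)\otimes_\lambda\maE_G\to\maE_Y$ with closed range; a short density argument then shows it is onto, hence a unitary. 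It finally remains to check $G$-equivariance, namely that $\Phi$ intertwines the operator $\id\otimes_\lambda V_G$ introduced above with the $G$-action unitary of $\maE_Y$ from the proof of Proposition~\ref{EY}.

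For (i): with $y$ fixed the sum over $g\in G_{\rho(y)}$ is finite, because $G$ is \'etale so $G_{\rho(y)}$ is discrete and meets $\supp\xi$ in a finite set; continuity of $y\mapsto\Phi(\zeta\otimes\xi)(y)$ is the familiar argument of covering $\supp\xi$ by finitely many open bisections and using the resulting local continuous sections of $s$; and $\supp\Phi(\zeta\otimes\xi)$ is contained in the image, under the action map $Y\rtimes_r G\to Y$, of the compact set $(\supp\zeta\times\supp\xi)\cap(Y\rtimes_r G)$, hence is compact. For (ii), one substitutes the module action $(\zeta f)(z)=\sum_{\gamma\in G_{\rho(z)}}\zeta(z\gamma^{-1})f(\gamma)$ and the regular representation $(\lambda(f)\xi)(g)=\sum_{g'\in G_{s(g)}}f(gg'^{-1})\xi(g')$ into the two sides, and performs the change of summation variable $\gamma=gg'^{-1}$ (legitimate since $g,g'$ both run through $G_{\rho(y)}$): both sides become $\sum_{g'\in G_{\rho(y)}}\sum_{\gamma\in G_{r(g')}}\zeta\bigl(y(\gamma g')^{-1}\bigr)\,f(\gamma)\,\xi(g')$. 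The same manipulation shows that for $\zeta_1,\zeta_2\in C_c(Y)$ one has $\langle\zeta_1,\zeta_2\rangle_{L^2_G(Y)}\in C_c(G)$, so the inner product computation below involves only genuine functions.

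The core of the argument is (iii). For $x\in X$ one expands $\langle\Phi(\zeta_1\otimes\xi_1),\Phi(\zeta_2\otimes\xi_2)\rangle_{\maE_Y}(x)=\sum_{g,g'\in G_x}\overline{\xi_1(g)}\,\xi_2(g')\int_{Y_x}\overline{\zeta_1(yg^{-1})}\,\zeta_2(yg'^{-1})\,d\mu_x(y)$, and uses the $G$-equivariance of the $\rho$-system $(\mu_x)$ (applied to the arrow $g$, together with $yg^{-1}g=y$) to rewrite the inner integral as $\int_{Y_{r(g)}}\overline{\zeta_1(y)}\,\zeta_2\bigl(y\,gg'^{-1}\bigr)\,d\mu_{r(g)}(y)$, which equals $\langle\zeta_1,\zeta_2\rangle_{L^2_G(Y)}(gg'^{-1})$ since $r(gg'^{-1})=r(g)$. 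Hence $\langle\Phi(\zeta_1\otimes\xi_1),\Phi(\zeta_2\otimes\xi_2)\rangle_{\maE_Y}(x)=\sum_{g,g'\in G_x}\overline{\xi_1(g)}\,\xi_2(g')\,\langle\zeta_1,\zeta_2\rangle_{L^2_G(Y)}(gg'^{-1})$, and a direct expansion of $\lambda_x$ identifies this with $\langle\xi_1,\lambda(\langle\zeta_1,\zeta_2\rangle_{L^2_G(Y)})\xi_2\rangle_{\maE_G}(x)$, which is by definition the inner product of $\zeta_1\otimes\xi_1$ and $\zeta_2\otimes\xi_2$ in $L^2_G(Y)\otimes_\lambda\maE_G$. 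I expect the only genuinely delicate point to be the source/range bookkeeping that runs through this chain of substitutions --- verifying composability of $gg'^{-1}$, $y(\gamma g')^{-1}$, and so on, and keeping track of which of $G_x$, $G_{r(g')}$, $Y_{r(g)}$ each index ranges over --- together with the (routine but fiddly) verification of (i).

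For surjectivity, observe that since $G$ is \'etale the unit space $X$ is open in $G^{(1)}$; if $\xi\in C_c(G)$ is supported in $X$, then the only arrow of $G_{\rho(y)}$ that contributes to $\Phi(\zeta\otimes\xi)(y)$ is the unit $\rho(y)$, so $\Phi(\zeta\otimes\xi)(y)=\zeta(y)\,\xi(\rho(y))$. Given $\zeta\in C_c(Y)$, the extension by zero of a function $\phi\in C_c(X)$ equal to $1$ on the compact set $\rho(\supp\zeta)$ provides such a $\xi$ with $\Phi(\zeta\otimes\xi)=\zeta$; therefore $C_c(Y)\subseteq\Im\Phi$, so $\Im\Phi$ is dense, and being closed it is all of $\maE_Y$. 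Equivariance is then verified by direct inspection: $s^*\Phi$ and $r^*\Phi$ are given by the same formula on the appropriate spaces of functions over $Y\rtimes_s G$ and $Y\rtimes_r G$, and the relation intertwining them with $\id\otimes_\lambda V_G$ follows after inserting $V_G\varphi(g_1,g_2)=\varphi(g_1g_2,g_2)$ and the description of the $G$-action on $\maE_Y$ recalled in the proof of Proposition~\ref{EY}. Finally, the case of $Y'=Y\rtimes_r G$ is treated verbatim: $Y'$ is again a proper (indeed free) $G$-space, with anchor $\rho\circ p_1$ and the induced equivariant $\rho'$-system, $\maE_{Y'}$ has fibres $L^2(Y_x,\mu_x)\otimes\ell^2(G^x)$, and the same formula together with the same computations produces the isometric $*$-isomorphism $L^2_G(Y')\otimes_\lambda\maE_G\cong\maE_{Y'}$.
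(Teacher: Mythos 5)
Your proposal is correct and follows essentially the same route as the paper's proof: the same formula and balancing computation, the same surjectivity argument via a compactly supported function on the clopen unit space $X$ extended by zero to $G$, and the same direct check of $G$-equivariance against $\id\otimes_\lambda V_G$. The only difference is that you write out in full the inner-product/isometry computation (using the $G$-equivariance of the $\rho$-system and the formula for $\lambda_x$) and the well-definedness of $\Phi(\zeta\otimes\xi)$ in $C_c(Y)$, steps the paper leaves as ``easy to check.''
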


\begin{proof}
{{We only give the proof for $\maE_Y$ since the proof for $\maE_{Y'}$ is similar.}} For $f, \xi \in C_c(G)$ and $\zeta \in C_c(Y)$, we have by direct inspection $\Phi (\zeta f\otimes \xi)=\Phi (\zeta\otimes \lambda (f)\xi)$. Indeed, the two expressions give at $y\in Y$:
$$
\sum_{g, g'\in G_{\rho(y)} } \zeta (yg^{-1}) \, f(g{g'}^{-1})\, \xi (g'), \quad  \text{ for }y\in Y.
$$
%
%
%
%
%
It is also easy to check that $\Phi$ is an isometry.
%
%
%
%
%
%
%
We complete the proof by pointing out that the space $C_c(Y)$ is contained in the range of $\Phi$. Let $\zeta\in C_c(Y)$ be given and denote by $K$ the image under $\rho$ of the support of $\zeta$, a compact subspace of $X$. With our assumptions on $X$, we can find a continuous compactly supported function $\varphi$ on $X$ which is identically $1$ on $K$. Since the unit space $X=G^{(0)}$ is a clopen subspace of $G$, we deduce that $\varphi$ extends trivially to a continuous compactly supported function $\delta$ on $G$. It is then clear that $\Phi (\zeta\otimes \delta) = \zeta$.

It remains to show the $G$-equivariance of $\Phi$. Let $V$ be as {{in the proof of Proposition \ref{EY}}} the unitary which defines the $G$-action on the Hilbert $C_0(X)$-module $\maE_Y$, let $\xi\in C_c(Y)$ and let $k\in C_c(G\rtimes_s G)$ be given. Then we compute for $(y, g)\in Y\rtimes_r G$:
$$
(V\circ s^*\Phi) (\xi\otimes k) (y, g) = (s^*\Phi) (\xi\otimes k) (yg, g) = \sum_{g'_1\in G_{\rho (yg)}}  \xi (ygg_1^{-1}) k (g_1, g).
$$
while in the same way we obtain
$$
(r^*\Phi)\circ (\id \otimes _{\lambda} V_G) (\xi\otimes k) (y, g) = (r^*\Phi) (\xi\otimes V_G k) (y, g)= \sum_{g_2\in G_{\rho (y)}} \xi (yg_2^{-1}) k (g_2 g , g).
$$
Setting in this last expression $g_2g=g_1$, the proof is complete.
\end{proof}


The representations $s^*\lambda$ and $r^*\lambda$ used above are the  induced ones on $s^*\maE_G$ and $r^*\maE_G$ respectively by $\lambda\otimes \id$. Notice that we sometimes denote them simply by $\lambda$ when no confusion can occur.
The  isomorphism $\Phi$ defined above induces the allowed Morita equivalence  of Theorem \ref{Morita}. More precisely,

\begin{proposition}
\label{Cstariso}
The map $\Phi_*: \maL_{C^*_{\reg}(G)}(L^2_G(Y))\rightarrow \maL_{C_0(X)}(\maE_Y)$ given by $\Phi_*(T)=\Phi\circ(T\otimes _\lambda \id)\circ\Phi^{-1}$ induces a $C^*$-isomorphism $\maK_{C^*_{\redg}(G)}(L^2_G(Y))\cong C^*_G(Y,\maE_Y)$.\\
{{The same statement holds if we replace $\maE_Y$ by $\maE_{Y'}$. More precisely, we also have the $C^*$-isomorphism
$$
\Phi_*\; : \; \maK_{C^*_{\redg}(G)}(L^2_G(Y'))\stackrel{\cong}{\longrightarrow} C^*_G(Y,\maE_{Y'})
$$}}
\end{proposition}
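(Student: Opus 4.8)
The plan is to prove the statement for $\maE_Y$ and then to note that the argument for $\maE_{Y'}$ is the same. First, since $\Phi\colon L^2_G(Y)\otimes_\lambda\maE_G\to\maE_Y$ is a unitary isomorphism of $G$-Hilbert $C_0(X)$-modules (Proposition \ref{tensorisom}), conjugation by $\Phi$ is a $*$-isomorphism $\maL_{C_0(X)}\big(L^2_G(Y)\otimes_\lambda\maE_G\big)\to\maL_{C_0(X)}(\maE_Y)$, and pre-composing this with $T\mapsto T\otimes_\lambda\id$ — a $*$-homomorphism into the former algebra that is \emph{injective} because the regular representation $\lambda$ is faithful (the Lemma above; this is exactly the point forcing the use of the reduced rather than the full groupoid $C^*$-algebra) — shows that $\Phi_*$ is an injective, hence isometric, $*$-homomorphism of $\maL_{C^*_{\redg}G}(L^2_G(Y))$ into $\maL_{C_0(X)}(\maE_Y)$. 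It then remains only to prove $\Phi_*\big(\maK_{C^*_{\redg}G}(L^2_G(Y))\big)=C^*_G(Y,\maE_Y)$ as subsets. For the inclusion ``$\subseteq$'' I would use density — $\maK_{C^*_{\redg}G}(L^2_G(Y))$ is the closed span of the rank-one operators $\psi\mapsto\zeta\langle\eta,\psi\rangle$ with $\zeta,\eta\in C_c(Y)$, and $C_c(Y)$ is dense in $L^2_G(Y)$ — and compute directly from the formulas for $\Phi$, for the $C_c(G)$-valued inner product of $L^2_G(Y)$ and for $\lambda$ that $\Phi_*$ sends such a rank-one operator to the fibrewise integral operator on $\maE_Y$ with kernel $k_{\zeta,\eta}(y,y')=\sum_{h\in G^{\rho(y)}}\zeta(yh)\overline{\eta(y'h)}$ (defined for $\rho(y)=\rho(y')=r(h)$). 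One checks that $k_{\zeta,\eta}$ is continuous and $G$-invariant, so the operator lies in $\big[\maL_{C_0(X)}(\maE_Y)\big]^G$; that it has finite propagation, since $\zeta(yh)\overline{\eta(y'h)}\neq 0$ forces $y,y'$ into a common fibre with $d_Y(y,y')=d_Y(yh,y'h)\leq\sup_{x\in X}\diam_{d_x}\big((\supp\zeta\cup\supp\eta)\cap Y_x\big)<\infty$ (finite because $\supp\zeta\cup\supp\eta$ is compact and $d_Y$ is continuous); and that, after right multiplication by any $\pi_Y(f)$ with $f\in C_c(Y)$, the kernel $k_{\zeta,\eta}(y,y')f(y')$ acquires compact support in $\{\rho(y)=\rho(y')\}$ by properness of the $G$-action, whence a $C_0(X)$-compact operator. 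So $\Phi_*$ of a rank-one operator lies in $C^*_G(Y,\maE_Y)$, and ``$\subseteq$'' follows.

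The reverse inclusion $C^*_G(Y,\maE_Y)\subseteq\Phi_*\big(\maK_{C^*_{\redg}G}(L^2_G(Y))\big)$ is the main obstacle, and the only step genuinely using that $Y$ is $G$-compact. Both sides being closed, I would treat only the dense subalgebra of $C^*_G(Y,\maE_Y)$ consisting of $G$-invariant operators $S$ of finite propagation with $S\pi_Y(f)\in\maK_{C_0(X)}(\maE_Y)$ for every $f\in C_c(Y)$, and realise each such $S$ as $\Phi_*(T)$ with $T\in\maK_{C^*_{\redg}G}(L^2_G(Y))$. This is the groupoid counterpart of the classical identification of the equivariant Roe algebra of a cocompact proper space with the compacts of its associated Hilbert module, and it is proved by a cut-and-reassemble argument: fix a \emph{compactly supported} cutoff function $c$ for the $G$-action on $Y$ (available by $G$-compactness); cut $S$ on both sides by the localising operators built from the $G$-translates of $c$; invoke the invariance $V_g S V_g^{*}=S$ to rewrite the resulting pieces as translates $V_{g_1}(B_\gamma)V_{g_2}^{*}$ of a ``$G$-finite'' family of compactly supported blocks $B_\gamma$ (indexed by $\gamma=g_1^{-1}g_2$), finiteness being forced by the finite propagation of $S$ together with properness of the action; note that each $B_\gamma$, by the hypothesis $S\pi_Y(c)\in\maK_{C_0(X)}(\maE_Y)$, is a compactly supported $C_0(X)$-compact operator, hence a norm limit of $C_0(X)$-finite-rank operators with kernels in $C_c$; and re-sum with the translations $V_g(\,\cdot\,)V_{g\gamma}^{*}$, which — this being the inverse of the computation of the previous paragraph, with the $V_g$'s accounting for the $\maE_G$-factor — converts each finite-rank block into a $C^*_{\redg}G$-finite-rank operator on $L^2_G(Y)$. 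The delicate part, where most of the work lies, is the book-keeping of this reassembly: one must confine the relevant groupoid elements to a relatively compact subset of $G^{(1)}$ and handle it by finitely many bisections (as $G$ is étale, not discrete), and one must check that the reassembled $T$ is genuinely $C^*_{\redg}G$-\emph{compact} rather than merely adjointable — which is exactly where the local compactness of $S$ is indispensable.

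For $\maE_{Y'}$ I would run the identical argument with the free proper $G$-space $Y'=Y\rtimes_r G$ in place of $Y$: Proposition \ref{tensorisom} already supplies the $G$-equivariant isometric isomorphism $\Phi\colon L^2_G(Y')\otimes_\lambda\maE_G\to\maE_{Y'}$, the inclusion ``$\subseteq$'' is unchanged (compact supports in $Y'$ still yield finite $d_Y$-propagation, because the representation of $C_0(Y)$ on $\maE_{Y'}$ factors through the first projection $Y'\to Y$), and for ``$\supseteq$'' one again reassembles using a compactly supported cutoff of the $G$-compact space $Y$ — although $Y'$ itself is not $G$-compact (one has $Y'/G\cong Y$), the algebra $C^*_G(Y,\maE_{Y'})$ depends only on the geometry of $Y$ and on the representation $\pi_Y\otimes\id$, while the additional $\ell^2(G^x)$-direction of $\maE_{Y'}$ is absorbed by the groupoid translations $V_g$. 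This yields the isomorphism $\Phi_*\colon\maK_{C^*_{\redg}G}(L^2_G(Y'))\to C^*_G(Y,\maE_{Y'})$ and completes the proof.
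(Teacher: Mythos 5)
Your first half is fine and is essentially the paper's own argument: the identification of $\Phi_*\theta_{\zeta,\eta}$ with the fibrewise integral operator whose kernel is $\sum_{h\in G^{\rho(y)}}\zeta(yh)\overline{\eta(y'h)}$ agrees (via $h=g^{-1}$) with the paper's kernel $K_{\zeta,\eta}(y,y')=\sum_{g\in G_{\rho(y)}}\zeta(yg^{-1})\overline{\eta(y'g^{-1})}$, and your finite-propagation, $G$-invariance and local-compactness checks, as well as the isometry of $\Phi_*$, match the paper. The genuine gap is in the reverse inclusion, which is the actual content of the proposition and which you leave at the level of a plan. Your ``cut on both sides, rewrite as translates of blocks $B_\gamma$, approximate each block by finite-rank operators, re-sum over bisections'' is announced but not executed, and two essential points are never established: (i) a norm bound for the reassembly map, uniform over operators with a fixed compact support, without which approximating each block within $\epsilon$ gives no control on the distance between $S$ and the reassembled operator; and (ii) the verification that the reassembled operator on $L^2_G(Y)$ is genuinely $C^*_{\redg}G$-compact. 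You flag both yourself (``delicate book-keeping'', ``must check \dots compact rather than merely adjointable'') but do not supply them, and they are exactly where the work lies.

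The paper avoids the bisection combinatorics entirely by a one-sided cut followed by averaging. For a compactly supported $P\in\maL_{C_0(X)}(\maE_Y)$ one sets $\Av(P)_x=\sum_{g\in G^x}V_gP_{s(g)}V_g^{-1}$, a locally finite sum by properness, which is adjointable, $G$-invariant, of finite propagation, and satisfies $\|\Av(P)\|\le\kappa\|P\|$ with $\kappa$ depending only on the support of $P$ (this is the Paterson estimate, and it is precisely the missing bound (i) above). For a $G$-invariant finite-propagation $T\in C^*_G(Y,\maE_Y)$ and a compactly supported cutoff $c$ (here $G$-compactness enters), the cutoff identity $\sum_{g\in G^{\rho(y)}}c(yg)=1$ gives the exact formula $T=\Av(\pi_Y(c)T)$; the operator $\pi_Y(c)T$ is a compactly supported element of $\maK_{C_0(X)}(\maE_Y)$, hence within $\epsilon$ of a finite sum of rank-one operators $\widehat{\theta}_{\xi_i,\eta_i}$ on $\maE_Y$ with $\xi_i,\eta_i\in C_c(Y)$ supported near the support of $\pi_Y(c)T$, and the key observation is that $\Av(\widehat{\theta}_{\xi,\eta})=\Phi_*\theta_{\xi,\eta}$ on the nose (its kernel is exactly $K_{\xi,\eta}$). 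Applying $\Av$ to the approximation then yields $\|T-\sum_i\Phi_*\theta_{\xi_i,\eta_i}\|\le\kappa\epsilon$, which settles surjectivity at once and also disposes of your point (ii), since the image of $\maK_{C^*_{\redg}G}(L^2_G(Y))$ under the isometry $\Phi_*$ is closed. If you wish to keep your two-sided decomposition you must prove a bound of this kind for your reassembly; otherwise the simplest repair is to replace that paragraph by the averaging argument. The same remark applies verbatim to $\maE_{Y'}$: as the paper notes, $(\pi_Y\otimes\id)(c)T$ is then only compactly supported in the $Y$-variable, but the identity $T=\Av((\pi_Y\otimes\id)(c)T)$ and the estimate still hold, so no separate treatment of the $\ell^2(G^x)$ direction is needed.
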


\begin{proof}
Let $\eta_1,\eta_2\in C_c(Y)\subset L^2_G(Y)$. Recall the (Hilbert module) rank one operator $\theta_{\eta_1, \eta_2}$ on $L^2_G (Y)$ defined by
$$
\theta_{\eta_1, \eta_2}  (\xi) := \eta_1 <\eta_2, \xi>.
$$
Notice that $s^*(\theta_{\eta_1,\eta_2} \otimes_{\lambda} \id)= \theta_{\eta_1,\eta_2}\otimes_{s^*\lambda} \id$. Since $\Phi$ is $G$-equivariant we can write
$$
V\circ s^*\Phi = r^*\Phi \circ (\id\otimes_{\lambda} V_G) .
$$
But $(\id\otimes_{\lambda} V_G) \circ (\theta_{\eta_1,\eta_2}\otimes_{s^*\lambda} \id)=(\theta_{\eta_1,\eta_2}\otimes_{r^*\lambda} \id)\circ  (\id\otimes_{\lambda} V_G)$ hence
\begin{eqnarray*}
V\circ (s^*\Phi) \circ (\theta_{\eta_1,\eta_2}\otimes_{s^*\lambda} \id) \circ s^*\Phi^{-1}& = & (r^*\Phi ) \circ (\theta_{\eta_1,\eta_2}\otimes_{r^*\lambda} \id)\circ (\id\otimes_{\lambda} V_G) \circ s^*\Phi^{-1}\\
& = & (r^*\Phi ) \circ r^*((\theta_{\eta_1,\eta_2}\otimes_{\lambda} \id)) \circ r^*\Phi^{-1} \circ  V.
\end{eqnarray*}
Hence if we denote by  $K_{\eta_1, \eta_2}$ the function on $Y\times_\rho Y$ given by
$$
K_{\eta_1, \eta_2} (y, y') := \sum_{g\in G_{\rho (y)}} \eta_1(y g^{-1}) {\overline{\eta_2(y'g^{-1})}},
$$
then the operator $\Phi_*\theta_{\eta_1, \eta_2}$ is given by the expression
$$
\Phi_*\theta_{\eta_1, \eta_2} (\xi) (y) = \int_{\rho^{-1} (y)} K_{\eta_1, \eta_2} (y, y') \xi (y') d\mu_{\rho(y)} (y').
$$
 It is  clear then that $\Phi_*\theta_{\eta_1, \eta_2}$ also has finite propagation with respect to  $d_Y$. Moreover, for any continuous compactly supported function $\varphi$ on $Y$, the operators $\pi_Y(\varphi) \circ \Phi_*\theta_{\eta_1, \eta_2}$ and $\Phi_*\theta_{\eta_1, \eta_2}\circ \pi_Y(\varphi)$ are families of operators on $(L^2 (Y_x))_{x\in X}$ which are associated {{(through the fiberwise integral as above) with the continuous kernels
$$
^\varphi K_{\eta_1, \eta_2} (y, y') = \varphi (y) K_{\eta_1, \eta_2} (y, y') \text{ and } K^\varphi_{\eta_1, \eta_2} (y, y') =  K_{\eta_1, \eta_2} (y, y')\varphi (y').
$$
{{Notice that the crossed product groupoid $(Y\times_\rho Y) \rtimes_s G$ is an \'etale groupoid so that the counting measures on $G$ induce $(Y\times_\rho Y) \rtimes_s G$ with a continuous Haar system, this shows in turn that the kernels $^\varphi K_{\eta_1, \eta_2}$ and $K^\varphi_{\eta_1, \eta_2}$ are continuous (and compactly supported) on $Y\times_\rho Y$.  Indeed, the properness of the $G$-action on $Y$ shows that the continuous functions on $(Y\times_\rho Y) \rtimes_s G$ given by
$$
(y, y'; g) \longmapsto \varphi (y) \eta_1 (yg^{-1}) {\overline{\eta_2(y'g^{-1})}} \text{ and } (y, y'; g) \longmapsto \varphi (y') \eta_1 (yg^{-1}) {\overline{\eta_2(y'g^{-1})}}
$$
are compactly supported.}} A standard argument then shows that they are both compact operators on the Hilbert module $\maE_Y$.
Indeed, any  continuous compactly supported kernel as above on $Y\times_\rho Y$ can be uniformly approximated by linear combinations  of kernels from $C_c(Y)\otimes  C_c(Y)$ (elementary kernels) that we restrict to $Y\times_\rho Y$ and which can even be supposed to be supported within a fixed compact subset of $Y\times_\rho Y$. This can be seen for instance using first the Tietze theorem and then the usual approximation property. This allows to prove for instance that the associated operator can be approximated by {{finite rank operators of the Hilbert module $\maE_Y$}}. }}

Since $\Phi_*$ is continuous (actually an isometry) we deduce from the previous discussion that it sends the compact operators of the Hilbert module $L^2_G(Y)$ to operators in $C^*_G(Y,\maE_Y)$.

To finish the proof, we need to show that the operators $\Phi_*\theta_{\eta_1, \eta_2}$ span a dense subspace of $C^*_G(Y,\maE_Y)$. We  use  averaging for our proper groupoid $Y\rtimes G$ as follows, see \cite{Paterson2} which extends techniques from \cite{CoMo}[Section 1].

Given a {\underline{compactly supported}} $P\in \maL_{C_0(X)}(\maE_Y)$ we may consider its well defined average operator $\Av(P)\in \maL_{C_0(X)}(\maE_Y)$  given as
$$
\Av(P)_x=\sum_{g\in G^x} V_gP_{s(g)} V_g^{-1}.
$$
The sum is of course finite due to the $G$-properness of the space $Y$ and the compact support of $P$. The resulting operator $\Av(P)$ then has finite propagation. The proof that $\Av(P)$ is an adjointable operator (with adjoint $\Av (P^*)$) and hence belongs to $\maL_{C_0(X)}(\maE_Y)$ is classical, see for instance \cite{Paterson2}[Theorem 4]. Indeed the norm of $\Av (P)$ can be estimated using the norm of  $P$ but also its support.  Moreover, by construction, the operator $\Av (P)$ is $G$-invariant.

If now $T$ is an element of  $C^*_G(Y,\maE_Y)$ with finite propagation and $c$ is a compactly supported continuous cutoff function (recall that $Y$ is $G$-compact), then
the operator $\pi_Y(c) T$ belongs to $\maK_{C_0(X)} (\maE_Y)$ and has compact support contained in some space of the form $A\times_\rho A\subset Y\times_\rho Y$ with $A$ a compact subspace of $Y$. Moreover,  since $T$ is already $G$-invariant, we have the convenient relation:
\begin{equation}\label{Average}
T = \Av (\pi_Y(c) T).
\end{equation}
Fix $\ep >0$ and let $(\xi_i,\eta_i)_{i\in I}$ be a finite collection of elements of $\maE_Y$ (whose supports may be taken inside $A$) such that
$$
||\pi_Y(c)T- \sum_{i\in I} {{\widehat{\theta}}}_{\xi_i,\eta_i}||<\epsilon,
$$
{{where we have denoted for the sake of clarity by ${{\widehat{\theta}}}_{\xi_i,\eta_i}$  the rank one operator defined similarly to $\theta_{\eta_1, \eta_2}$ but now acting on the Hilbert module  $\maE_Y$.}} A density argument allows to further assume that the $\xi_i's$ and $\eta_i's$ live in $C_c(Y)$. Moreover the support of $\sum_{i\in I} {{\widehat{\theta}}}_{\xi_i,\eta_i}$ can be assumed as close as we please to that of $\pi_Y(c)T$. Therefore, we deduce the existence of a  constant $\kappa$ such that
$$
||\Av \left( \pi_Y(c)T)  - \sum_{i\in I} {{\widehat{\theta}}}_{\xi_i,\eta_i}  \right) ||  \leq \kappa || \pi_Y(c) T - \sum_{i\in I} {{\widehat{\theta}}}_{\xi_i,\eta_i}  ||.
$$
Since $\Av ( \pi_Y(c)T) = T$ and $\Av \left(\sum_{i\in I} {{\widehat{\theta}}}_{\xi_i,\eta_i}  \right) = \sum_{i \in I} \Phi_* {{\widehat{\theta}}}_{\xi_i, \eta_i}$, the proof is complete for the Hilbert module $\maE_Y$.

{{Now, all the above arguments hold as well for the Hilbert module $\maE_{Y'}$ with the extended representation $\pi_Y\otimes_{r^*} \id$, but  one has to use finite propagation of operators on $\maE_{Y'}$ according to our definition, say with respect to the representation of $C_0(Y)$. Formula \ref{Average} then still makes sense for a finite propagation operator $T$ on $\maE_{Y'}$ although the operator $(\pi_Y\otimes_{r^*} \id ) (c) T$ is nomore compactly supported but is only compactly supported with respect to the $Y$ variable. The rest of the proof is similar. }}

%
%
%
%
%
%
%

\end{proof}

We have now completed the proof of our theorem.  More precisely:

\begin{proof} (of Theorem (\ref{Morita}))
{{Since $Y'$ is a free and proper $G$-space, the Hilbert module   $L^2_G(Y')$ is a full Hilbert $C^*_{\redg}(G)$-module, see \cite{MuhlyRenaultWilliams}[Proposition 2.10]. Therefore the assertion of the theorem follows immediately from Proposition \ref{Cstariso}}}.
\end{proof}

{{\begin{remark}\label{semi-ample}
It is  obvious from the above proof that Theorem \ref{Morita} holds with $\maE_Y$ instead of $\maE_{Y'}$ when the action of $G$ on $Y$ is assumed to  be free (and proper).
\end{remark}}}

%

\section{Compatibility with the Baum-Connes map}\label{CompPaschke}

  We use the notations of the previous sections, in particular the Hilbert $G$-module $\maE_Y$ is associated with the field of Hilbert spaces $L^2(Y_x, \mu_x)$. {{The compatibility theorems proved in the present section hold for $\maE_Y$ as well as  for $\maE_{Y'}$ and for simplicity we only give the proofs for the first Hilbert module and leave the easy modifications as an easy verification for the interested reader.}}
The  short exact sequence of $C^*$-algebras
$$
0\rightarrow C^*_G(Y,\maE_Y)\longrightarrow D^*_G(Y,\maE_Y)\longrightarrow Q^*_G(Y,\maE_Y)\rightarrow 0.
$$
together with Bott periodicity, yields  the following periodic  six-term exact sequence of $K$-groups
\begin{eqnarray}\label{surgeryseq}
\begin{CD}\label{SixTerm}
K_0(C^*_G(Y,\maE_Y)) @>>> K_0(D^*_G(Y,\maE_Y)) @>>> K_0(Q^*_G(Y,\maE_Y)) \\
@A{\del_1}AA   @.  @VV{\del_0}V\\
K_{1}(Q^*_G(Y,\maE_Y)) @ <<< K_1(D^*_G(Y,\maE_Y)) @<<< K_1(C^*_G(Y,\maE_Y))\\
\end{CD}
\end{eqnarray}

In this section we shall prove  the compatibility of the connecting maps $\del_i, i=0,1$ with the classical Baum-Connes  map, as described for instance in \cite{Tu}. See \cite{ConnesBook} for a more detailed description of this latter for \'etale groupoids and its relation with important conjectures in geometry and topology, especially in the study of foliations. Our result, Theorem \ref{compatibilityBC}  below, is well known for discrete groups \cite{Roe} and our method is an extension of Roe's proof to groupoids and Hilbert modules associated with groupoids.

\subsection{The Paschke-Higson map}

We define here the Paschke-Higson maps
$$
\maP_*:K_*(Q^*_G(Y, \maE_Y))\longrightarrow KK^{*+1}_G(Y,X), \quad *=0, 1.
$$
In  the even case, it is easy to see that  we can take a class $y\in K_0(Q^*_G(Y, \maE_Y))$ which is represented by a self-adjoint operator $P \in D^*_G(Y,\maE_Y)$ satisfying the relation
$$
P^2=P  \quad \text{ modulo } C^*_G(Y,\maE_Y).
$$
Recall that $C_0(Y)$ and $C_0(X)$ are $G$-algebras, and that $\maE_Y$ is a $G$-Hilbert $C_0(X)$-module such that $\pi_Y: C_0(Y)\rightarrow \maL(\maE_Y)$ is a $G$-equivariant representation. Also  the operator $2P-1$ is self-adjoint and satisfies $(2P-1)^2=\id$ up to $C^*_G(Y,\maE_Y)$,  moreover it  is \emph{exactly} $G$-invariant (not only up to compacts), hence the triple $(\pi_Y,\maE_Y, 2P-1)$ represents a class in $KK^{1}_G(Y,X)$.

We thus define the class  $\maP_0 (y)\in KK^{1}_G(Y,X)$ by setting
$$
\maP_0 (y) := \left[(\pi_Y, \maE_Y, 2P-1)\right].
$$
Using invariance of $KK$-classes under operator homotopy, the universal property of Grothendieck groups, and that  $KK$-classes don't see the operation of adding  degenerate cycles, it is easy to check that   $\maP_0 (y)$ is well defined, that is: it only depends on the class $y$ of $P$ in $K_0(Q^*_G(Y, \maE_Y))$.

To define similarly the Paschke-Higson map $\maP_1: K_1(Q^*_G(Y, \maE_Y))\longrightarrow KK^{0}_G(Y,X)$ corresponding to  the odd case, we let similarly $y\in K_1(Q^*_G(Y, \maE_Y))$ be a class which is  represented by an operator $u \in D^*_G(Y,\maE_Y)$ satisfying
$$
uu^*=I \text{ and }u^*u=I  \quad \text{ modulo } C^*_G(Y,\maE_Y)
$$
Then we set
$$
\maP_1(y):= \left[(\pi_Y\oplus \pi_Y, \maE_Y\oplus \maE_Y, \begin{bmatrix}   0 & u^*\\ u& 0 \end{bmatrix})\right]
$$
Again the triple $(\pi_Y\oplus \pi_Y, \maE_Y\oplus \maE_Y, \begin{bmatrix}   0 & u^*\\ u& 0 \end{bmatrix})$ is obviously a $\Z_2$-graded $G$-equivariant Kasparov cycle which represents a class in $KK_G^0 (Y, X)$. Moreover, the class $\maP_1(y)$ is well defined, i.e. only depends on the class $y$ of $u$ in $K_1(Q^*_G(Y, \maE_Y))$ and not on the representative $u$.



We now recall the Baum-Connes map associated with the proper $G$-compact space $Y$ \cite{Tu}.  So, associated with the proper $G$-compact space $(Y, \rho)$, there is a  (Baum-Connes) index map
$$
\mu_{BC, Y}^* : KK_G^* (Y, X) \longrightarrow K_{*} (C^*_{\redg} (G)),
$$
that we proceed to recall now for the convenience of the reader. See again \cite{Tu} and also \cite{HigsonNov}.

The map $\mu_{BC, Y}^*$ will be the composite map of two standard constructions that we call respectively  ''the descent map'' and ''the KM contraction'' in reference to Kasparov-Michschenko, i.e.
$$
\mu_{BC, Y}^*\; : \; KK^{*}_G(Y,X)\xrightarrow{descent} KK^*(C_0(Y)\rtimes_{\redg}G, C^*_{\redg}G)\xrightarrow{p_{KM}}KK^*(\C, C^*_{\redg}G) \cong K_{*}(C^*_{\redg}G)
$$
The KM contraction is given by reducing to the image of a Michschenko projection $p_{KM}\in C_0(Y)\rtimes_{\redg}G$ and was defined by Kasparov.
More precisely such projection defines a class $[p_{KM}]$ in $K_0(C_0(Y)\rtimes_{red}G)\simeq KK (\C, C_0(Y)\rtimes_{red}G)$ and the map $p_{KM}$ is given as the Kasparov product with this class.

On the other hand, the descent map was introduced by Kasparov for groups and extended by Le Gall to groupoids in \cite{LeGall}.
For a ($\Z_2$-graded) $G$-Hilbert module $E$, one can define the crossed-product Hilbert $C^*_{\redg}G$-module $E\rtimes G$, it is by definition given by an interior tensor product as follows:
$$
E\rtimes G:= E\otimes_{C_0(X), r^*} C^*_{\redg}G
$$
where the action of $C_0(X)$ on $C^*_{\redg}G$ is given via the pull-back map induced by the range map $r: G\rightarrow X$. Note that $E\rtimes G$ inherits a $\Z_2$-grading from $E$.
Any  $G$-equivariant, degree-preserving representation $\pi: C_0(Y)\rightarrow \maL_{C_0(X)}(E)$ induces the crossed-product representation
$$
\pi\rtimes \lambda: C_0(Y)\rtimes_{\redg} G \rightarrow \maL_{C^*_{\redg}G}(E\rtimes G)
$$
which is defined as follows. For $\xi\in C_c(Y), \phi \in C_c(G), \eta \in \pi(C_c(Y))E, \alpha\in C_c(G)$, and $g\in G$
$$
\pi\rtimes\lambda(\xi\otimes \phi)(\eta\otimes\alpha)(g):= \sum_{g'\in G^{r(g)}} [\pi(\xi)V^E_{g'}\eta].(\phi(g')\alpha(g'^{-1}g) )
$$
where $V^E \in \maL(s^*E, r^*E)$ is the unitary implementing the $G$-action on $E$.

Now suppose that $(\pi, E, F)$ is a triple representing a class in $KK^*_G(Y, X)$. Then the triple $(\pi\rtimes\lambda, E\rtimes G, F $ defines a $KK$-cycle in
$KK_*(C_0(Y)\rtimes_{red}G, C^*_{\redg}G)$ (see e.g. \cite{Tu} and \cite{Kasparov2}). We end up in this way with the Kasparov descent map
$$
j_G: KK^*_G(Y, X)\longrightarrow KK_*(C_0(Y)\rtimes_{red}G, C^*_{\redg}G) \text{ defined by }
j_G([(\pi,E,F)]):= [(\pi\rtimes\lambda, E\rtimes G, F ].
$$
Moreover, with $c$ being the cutoff function defined above for the cocompact proper $G$-action on $Y$, the element $e\in C_c(Y\rtimes_r G)\subseteq C_0(Y)\rtimes_{\redg} G$ given by
$$
e(y,g):= \sqrt{c}(y)\sqrt{c}(yg)
$$
is a projection in $C_0(Y)\rtimes_{\redg}G$. Therefore $e$ defines the Kasparov-Michschenko  class
$p_{KM}$ which is viewed as an element of  $KK_*(\C, C_0(Y)\rtimes_{\redg}G)$. Kasparov cup-product with this element gives a map:
$$
KK_*(C_0(Y)\rtimes_{\redg} G, C^*_{\redg} G)\xrightarrow{p_{KM}\otimes \bullet} KK_*(\C, C^*_{\redg} G)
$$
Composition of this map with the descent map $j_G$ is the Baum-Connes map associated with $Y$:
$$
\mu_{BC,Y}^*: KK^*_G(Y, X)\longrightarrow K_*(C^*_{\redg}G).
$$
Let us also recall the universal Baum-Connes map for complenetess \cite{Tu}. If $Y$ is a locally compact proper $G$-space which is not necessarily $G$-compact, then the Baum-Connes map for $Y$ is defined by an inductive limit over $G$-compact closed subspaces. More precisely, for any $G$-compact closed subspace $Y'$ of $Y$, we have the above map $\mu_{BC,Y'}^*$, and if $Y''\subset Y'$ is an inclusion of $G$-compact closed subspaces of $Y$, then the $G$-equivariant restriction  morphism $C_0(Y') \subset C_0(Y'')$ yields the map $KK^*_G(Y'', X) \to KK^*_G(Y', X)$ which can easily be seen to be compatible with the Baum-Connes maps $\mu_{BC,Y''}^*$ and $\mu_{BC,Y'}^*$. Hence, there is a well defined Baum-Connes map for the $G$-proper locally compact space $Y$ which is well defined on the inductive limit, over all $G$-compact closed subspaces, denoted
$$
RK^*_G (Y, X) \; := \;  \lim_{Y'\subset Y, Y'/G\; {\text{compact}}} \; KK^*_G(Y', X)
$$
In \cite{Tu}, a locally compact model for the classifying space of proper $G$-actions is constructed and we denote it $\underline {E}G$.

\begin{definition}
The universal Baum-Connes map for our \'etale groupoid $G$ is the well defined morphism
$$
\mu_{BC}^*: RK^*_G(\underline{E}G, X) \longrightarrow K_*(C^*_{\redg}G)
$$
\end{definition}

\begin{remark}
If $F$ is any additional $G$-algebra then we end up using the above construction with the Baum-Connes assembly map with coefficients in $F$:
$$
\mu_{BC, F}^*: RK^*_G(\underline{E}G, F) \longrightarrow K_*(F\rtimes_{\redg}G),
$$
so that $\mu_{BC}^*=\mu_{BC, C_0(X)}^*$.
\end{remark}

\subsection{The compatibility theorem}

We have proved in the previous section that the $C^*$-algebra {{$C^*_G(Y, \maE_{Y'})$}} is Morita equivalent to the reduced $C^*$-algebra $C^*_{\redg}  (G)$  associated with the \'etale groupoid $G$. {{This isomorphism result will be needed in the next papers of this series but will not be used here. Recall that if the action of $G$ on $Y$ is free for instance then $\maE_Y$ is always full and no need to use    the slightly modified Hilbert module $\maE_{Y'}$. We shall for simplicity rather give the constructions and proofs for  the  Hilbert module $\maE_Y$ and only point out that all the constructions can be easily modified so as to apply to the Hilbert module $\maE_{Y'}$.}}

Recall that we have constructed an explicit Hilbert $C^*_{\redg} (G)$-module $L^2_G(Y)$ whose compact operators are isomorphic through the map $\Phi_*$ to   $C^*_G(Y, \maE_Y)$. {{So, assuming that this module is full,   the $K$-theory isomorphism}}
$$
\maM_*: K_*(C^*_G(Y, \maE_Y)) \longrightarrow K_*(C^*_{\redg} (G)),
$$
induced by this Morita equivalence, can be described using Kasparov's $KK$-theory as the cup product with the class of the (trivially $\Z_2$-graded) even cycle  $(L^2_G(Y), \Phi_*, 0)$.

\begin{theorem}\label{compatibilityBC}
With the previous notations,  the following diagram commutes:
\[
\begin{CD}\label{BaumConnesassembly}
K_*(Q^*_G(Y,\maE_Y)) @> \del_*    >> K_{*+1}(C^*_G(Y,\maE_Y)) \\
@V\maP_* VV    @V\maM_{*+1}VV \\
KK^{*+1}_G(Y,X) @> \mu^{*+1}_{BC} >> K_{*+1}(C^*_{\redg}G)\\
\end{CD}
\]
where $\del_*$ is the connecting map in  (\ref{surgeryseq}), $\maP_*$ is the Pashcke-Higson map and $\mu^*_{BC}$ is  the Baum-Connes assembly map recalled in the previous paragraph.
\end{theorem}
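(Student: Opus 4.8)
The plan is to check commutativity directly on carefully chosen representatives, carrying out the even case $*=0$ in full and obtaining the odd case by the same bookkeeping (or by suspending the whole square). Two structural facts drive the argument: in the periodic sequence~(\ref{surgeryseq}) the connecting map $\del_0\colon K_0(Q^*_G(Y,\maE_Y))\to K_1(C^*_G(Y,\maE_Y))$ is the exponential map of the extension $0\to C^*_G(Y,\maE_Y)\to D^*_G(Y,\maE_Y)\to Q^*_G(Y,\maE_Y)\to 0$; and both vertical maps are Kasparov products, $\maM_{*+1}$ being cup product with the bimodule class $[L^2_G(Y)]$ of Proposition~\ref{Cstariso}, while $\mu^{*+1}_{BC}$ is the descent $j_G$ followed by the product with the Mishchenko projection $[p_{KM}]=[e]$, $e(y,g)=\sqrt c(y)\sqrt c(yg)$.

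\textbf{Unwinding the two routes.} Fix $y\in K_0(Q^*_G(Y,\maE_Y))$ and represent it by a self-adjoint $P\in D^*_G(Y,\maE_Y)$ with $P^2-P\in C^*_G(Y,\maE_Y)$, so that $\maP_0(y)=[(\pi_Y,\maE_Y,2P-1)]$. Since $P^2\equiv P$, the element $u:=\exp(2\pi i P)$ lies in the unitization of $C^*_G(Y,\maE_Y)$ and $\del_0(y)=[u]$ in $K_1(C^*_G(Y,\maE_Y))$ (up to orientation); by Proposition~\ref{Cstariso} it corresponds to a unitary $\hat u$ in $\maK_{C^*_{\redg}G}(L^2_G(Y))^{\sim}$, and $\maM_1(\del_0(y))=[\hat u]$ under $K_1(\maK_{C^*_{\redg}G}(L^2_G(Y)))\cong K_1(C^*_{\redg}G)$ — legitimate because $L^2_G(Y)$ is full (automatic for $\maE_{Y'}$ by Theorem~\ref{Morita}, and for $\maE_Y$ when the $G$-action is free, cf.\ Remark~\ref{semi-ample}). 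For the other route, $2P-1$ is \emph{exactly} $G$-invariant, hence descends to $(2P-1)\otimes 1$ on $\maE_Y\rtimes G=\maE_Y\otimes_{C_0(X),r^*}C^*_{\redg}G$, and $j_G(\maP_0(y))=[(\pi_Y\rtimes\lambda,\maE_Y\rtimes G,2P-1)]$. A short computation shows $[\,(2P-1)\otimes 1,\,(\pi_Y\rtimes\lambda)(\phi)\,]\in\maK_{C^*_{\redg}G}$ for every $\phi\in C_0(Y)\rtimes_{\redg}G$ (it reduces to $[2P-1,\pi_Y(f)]\otimes 1$ with $[2P-1,\pi_Y(f)]\in\maK_{C_0(X)}(\maE_Y)$); in particular $2P-1$ commutes modulo compacts with $q:=(\pi_Y\rtimes\lambda)(e)$. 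Using $e\,(C_0(Y)\rtimes_{\redg}G)\otimes_{\pi_Y\rtimes\lambda}(\maE_Y\rtimes G)\cong q(\maE_Y\rtimes G)$ and that the first cycle of the product carries the zero operator, one checks that $q(2P-1)q$ is a valid $(2P-1)$-connection (again because $q[2P-1,q]\subseteq\maK$), so that
\[
\mu^{1}_{BC}(\maP_0(y))=\bigl[\,(q(\maE_Y\rtimes G),\,q(2P-1)q)\,\bigr]\in K_1(C^*_{\redg}G),
\]
a genuine odd cycle since $(q(2P-1)q)^2-q\in\maK_{C^*_{\redg}G}$ by $(2P-1)^2-1\in C^*_G(Y,\maE_Y)$ and the $G$-compactness of $Y$ (so the cutoff is compactly supported).

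\textbf{Matching the two classes.} By Proposition~\ref{tensorisom}, $\maE_Y\cong L^2_G(Y)\otimes_\lambda\maE_G$, hence $\maE_Y\rtimes G\cong L^2_G(Y)\otimes_\lambda(\maE_G\rtimes G)$, under which $(2P-1)\otimes 1$ becomes $(2\hat P-1)\otimes\id$, where $\hat P\in\maL_{C^*_{\redg}G}(L^2_G(Y))$ corresponds to $P$ (use that $C^*_G(Y,\maE_Y)$ is an essential ideal of $D^*_G(Y,\maE_Y)$, so $D^*_G(Y,\maE_Y)\subseteq M(C^*_G(Y,\maE_Y))\cong\maL_{C^*_{\redg}G}(L^2_G(Y))$ via Proposition~\ref{Cstariso}). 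The crucial point is that, $G$ being \'etale and $Y$ being $G$-compact, the descended Mishchenko projection $q$ — built from $\sqrt c$ with $\sum_{g\in G^{\rho(z)}}c(zg)=1$ — is carried to $1\otimes q_0$, where $q_0$ projects the ample module $\maE_G\rtimes G$ onto a single copy of $C^*_{\redg}G$; this is essentially the identity $\Phi(\zeta\otimes\delta)=\zeta$ established inside the proof of Proposition~\ref{tensorisom}. Consequently $q(\maE_Y\rtimes G)\cong L^2_G(Y)$ as Hilbert $C^*_{\redg}G$-modules and $q(2P-1)q$ is carried to $2\hat P-1$. Comparing the odd cycle $(L^2_G(Y),2\hat P-1)$ with the unitary $\hat u=\exp(2\pi i\hat P)$, the former represents $[\exp(\pi i(2\hat P-1))]=[-\exp(2\pi i\hat P)]=[\hat u]$ in $K_1$, the sign being absorbed since $-1$ is homotopic to $1$ in $U(\maK_{C^*_{\redg}G}(L^2_G(Y))^{\sim})$. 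This gives $\maM_1\circ\del_0=\mu^{1}_{BC}\circ\maP_0$. For $*=1$ one runs the identical descent/compression argument on the graded cycle $(\pi_Y\oplus\pi_Y,\maE_Y\oplus\maE_Y,\begin{bmatrix}0&u^*\\u&0\end{bmatrix})$ representing $\maP_1(y)$, matching the Fredholm index of $quq$ with $\maM_0(\del_1(y))$, or one deduces the odd case from the even one by applying the square to a suspension; the modifications for $\maE_{Y'}$ use only the $\maE_{Y'}$-versions of Propositions~\ref{tensorisom} and~\ref{Cstariso} already recorded.

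\textbf{Main obstacle.} The delicate step is the middle one: identifying the range of the descended Mishchenko projection $q$ inside $\maE_Y\rtimes G$ with the copy $L^2_G(Y)$ inside $L^2_G(Y)\otimes_\lambda(\maE_G\rtimes G)$ \emph{compatibly with the operator} $2P-1$, and justifying that the Kasparov product defining $\mu_{BC}$ is represented by the bare compression $q(2P-1)q$, with no operator perturbation. Both rest on the same inputs — that $2P-1$ lies in $D^*_G$, so $[2P-1,\pi_Y(f)]\in\maK$ for $f\in C_0(Y)$, and that the $G$-action on $Y$ is proper and $G$-compact, so the cutoff is compactly supported and the averaging technique underlying Propositions~\ref{Cstariso} and~\ref{tensorisom} applies. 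A secondary nuisance is the bookkeeping of signs, gradings, and the passage between the exponential and self-adjoint-involution models of odd $K$-theory, none of which affects the final equality.
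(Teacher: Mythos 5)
Your overall skeleton is the same as the paper's: compute $\maM_{*+1}\circ\del_*$ via the multiplier/exponential description of the boundary map and the Morita module $L^2_G(Y)$ (this matches Proposition \ref{maM}), and compute $\mu^{*+1}_{BC}\circ\maP_*$ by descending $2P-1$ to $\maE_Y\rtimes G$ and compressing by the Mishchenko projection, whose range is identified with $L^2_G(Y)$ exactly as in Proposition \ref{isometry}. The problem is the matching step, which you yourself flag as the main obstacle and then resolve by an assertion rather than a proof. Your claim that, under the isomorphism $\maE_Y\rtimes G\cong L^2_G(Y)\otimes_\lambda(\maE_G\rtimes G)$ coming from Proposition \ref{tensorisom}, the descended Mishchenko projection $q=(\pi_Y\rtimes\lambda)(e)$ is carried to an operator of the form $1\otimes q_0$, so that the compression $q(2P-1)q$ becomes \emph{exactly} $2\hat P-1$, is unjustified and false in general. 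The projection $q$ is built from the cutoff $c$ on $Y$ and acts by $(q\eta)(y,g)=\sqrt{c}(y)\sum_{g'\in G^{r(g)}}\sqrt{c}(yg')\,\eta(yg',g'^{-1}g)$; it genuinely involves the $Y$-variable and does not act through the $\maE_G\rtimes G$ leg alone (the identity $\Phi(\zeta\otimes\delta)=\zeta$ you invoke is a surjectivity device in the proof of Proposition \ref{tensorisom} and says nothing about $e$). Concretely, if your exact intertwining held, then $I^*\bar F I$ would equal $\mathfrak{F}$ on the nose for every $F\in D^*_G(Y,\maE_Y)$; but the direct computation shows
\[
(I^*\bar F I)(\xi)(y)=F(\xi)(y)+\sum_{h\in G_{\rho(y)}}\sqrt{c}(yh^{-1})\,[F,\pi_Y(\sqrt{c})](h\xi)(yh^{-1}),
\]
and the second term is a nonzero operator in general (exact $G$-invariance of $F$ does not make it commute with multiplication by $\sqrt{c}$).

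What is true, and what the argument actually needs, is only that this correction term is a \emph{compact} perturbation: $I^*\bar F I-\mathfrak{F}\in\maK_{C^*_{\redg}(G)}(L^2_G(Y))$. This is precisely Lemma \ref{compactperturb} of the paper, and its proof is the real work: one reduces to finite-propagation $F$, uses the compact support of $c$ and properness of the action to see that the averaged operator built from $[F,\pi_Y(\sqrt{c})]$ is a locally finite sum of compact operators, hence lies in $C^*_G(Y,\maE_Y)\cong\maK_{C^*_{\redg}(G)}(L^2_G(Y))$. Your proposal skips exactly this computation, so as written it has a genuine gap; once you replace the false exact-intertwining claim by a proof of the compact-perturbation statement (and similarly treat the odd case, where the paper gives a direct matrix argument rather than a suspension), the rest of your reasoning (the connection argument for the Kasparov product with the zero-operator Mishchenko cycle, the exponential versus symmetry dictionary in odd $K$-theory, and the fullness caveats for $\maE_Y$ versus $\maE_{Y'}$) is sound and in line with the paper.
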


The Paschke-Higson map is known to be an isomorphism for many classes of groupoids, especially for discrete countable groups, and also for groupoids associated with discrete countable group actions on spaces. This latter result is proved in the second paper of this series using an equivariant family version of the Voiculescu theorem. Therefore, Theorem \ref{compatibilityBC} relates the Baum-Connes conjecture for the groupoid $G$ with vanishing rigidity results.

\subsubsection{Proof of Theorem \ref{compatibilityBC} in the even case}
Our goal is thus to prove the commutativity of the following diagram:
\begin{equation}\label{BaumConnesassembly}
\begin{CD}
K_0(Q^*_G(Y,\maE_Y)) @> \del_0    >> K_1(C^*_G(Y,\maE_Y)) \\
@V\maP_0 VV     @VV \maM_1 V\\
KK^1_G(Y,X) @> \mu^1_{BC} >> K_1(C^*_rG)\\
\end{CD}
\end{equation}

As already observed, we can start with an operator $P\in D^*_G(Y,\maE_Y)$  which satisfies the relations $P=P^*$ and $P^2- P \in C^*_G(Y,\maE_Y)$, and represents a class $[P]\in K_0(Q^*_G (Y, \maE_Y))$. No need here to use matrix algebras which would yield to the same argument. Recall the Hilbert module $L^2_G(Y)$ over the reduced $C^*$-algebra $C^*_{\redg}(G)$ with the representation $\Phi_*$ of $C^*_G(Y, \maE_Y)$.
The computation of  $\maM\circ \del$ is given in the  following

\begin{proposition}\label{maM}
Denote by $\mathfrak{F}$ the adjointable operator on $L^2_G(Y)$ which corresponds to $2P-I$ through the isomorphism $\Phi_*$  of Proposition \ref{Cstariso}. Then the image of $[P]$ under the composite map $\maM_1\circ \del_0$ is represented by the odd Kasparov  $ (\C, C^*_{\redg} (G))$ cycle
$$
(L^2_G( Y) , \pi_\C, \mathfrak{F})
$$
where $\pi_\C$ is the trivial representation by multiplication of scalars.
\end{proposition}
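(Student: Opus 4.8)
\medskip
\noindent\textbf{Proof strategy.}
The plan is to unwind $\maM_1\circ\del_0$ completely and to recognise the outcome as continuous functional calculus applied to $\mathfrak{F}$. First I would fix the algebraic framework. By Proposition~\ref{Cstariso} the map $\Phi_*$ restricts to a $C^*$-isomorphism $\maK_{C^*_{\redg}(G)}(L^2_G(Y))\cong C^*_G(Y,\maE_Y)$; it therefore extends uniquely to an isomorphism of the associated multiplier algebras $\maL_{C^*_{\redg}(G)}(L^2_G(Y))\cong M(C^*_G(Y,\maE_Y))$, and since $C^*_G(Y,\maE_Y)$ is an ideal of the unital algebra $D^*_G(Y,\maE_Y)$, the inclusion $D^*_G(Y,\maE_Y)\hookrightarrow M(C^*_G(Y,\maE_Y))$ allows us to view $2P-I$ as an element of $\maL_{C^*_{\redg}(G)}(L^2_G(Y))$; this is exactly the operator $\mathfrak{F}$ of the statement. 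The relations $P=P^*$ and $P^2-P\in C^*_G(Y,\maE_Y)$ translate into $\mathfrak{F}=\mathfrak{F}^*$ and $\mathfrak{F}^2-I\in\maK_{C^*_{\redg}(G)}(L^2_G(Y))$, so that $(L^2_G(Y),\pi_\C,\mathfrak{F})$ is a legitimate odd Kasparov $(\C,C^*_{\redg}(G))$-cycle, $\pi_\C$ being the unital representation by scalars.

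Next I would compute $\del_0[P]$ via the exponential description of the connecting homomorphism for the extension $0\to C^*_G(Y,\maE_Y)\to D^*_G(Y,\maE_Y)\to Q^*_G(Y,\maE_Y)\to 0$. Writing the entire function $e^{2\pi i t}-1=t(t-1)h(t)$, one has $\exp(2\pi i P)-I=(P^2-P)\,h(P)\in C^*_G(Y,\maE_Y)$ since $h(P)\in D^*_G(Y,\maE_Y)$, so $u:=\exp(2\pi i P)$ is a unitary in $C^*_G(Y,\maE_Y)^{+}$ congruent to $I$ modulo $C^*_G(Y,\maE_Y)$, and $\del_0[P]=[u]\in K_1(C^*_G(Y,\maE_Y))$ with the orientation of the six-term sequence recalled above. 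Then I would push $[u]$ through $\maM_1$. Because the multiplier extension of $\Phi_*^{-1}$ is a $*$-isomorphism it commutes with continuous functional calculus, and it sends $P$ to $\tfrac12(\mathfrak{F}+I)$; hence it sends $u$ to $\exp\!\big(\pi i(\mathfrak{F}+I)\big)=-\exp(\pi i\mathfrak{F})$, a unitary in $\maK_{C^*_{\redg}(G)}(L^2_G(Y))^{+}$ congruent to $I$ modulo the compacts. Thus $\maM_1(\del_0[P])$ is the image of $\big[-\exp(\pi i\mathfrak{F})\big]\in K_1\big(\maK_{C^*_{\redg}(G)}(L^2_G(Y))\big)$ under the stability isomorphism $K_1\big(\maK_{C^*_{\redg}(G)}(L^2_G(Y))\big)\cong K_1(C^*_{\redg}(G))\cong KK^{1}(\C,C^*_{\redg}(G))$. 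Finally I would invoke the standard dictionary between the unitary picture of $K_1$ and odd Kasparov cycles: a self-adjoint operator $\mathfrak{F}$ on a full Hilbert $C^*_{\redg}(G)$-module whose square is $I$ modulo the compacts yields, via the Cayley-type exponential $\mathfrak{F}\mapsto\exp(\pi i(\mathfrak{F}+I))=-\exp(\pi i\mathfrak{F})$, a unitary representing precisely the $K$-theory class of $(L^2_G(Y),\pi_\C,\mathfrak{F})$; matching the (elementary) signs then gives $\maM_1\circ\del_0[P]=\big[(L^2_G(Y),\pi_\C,\mathfrak{F})\big]$.

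The points where I would be most careful, and which I regard as the only real obstacles, are: (i) verifying that $\Phi_*$ genuinely extends to the multiplier level, carries $D^*_G(Y,\maE_Y)$ into $\maL_{C^*_{\redg}(G)}(L^2_G(Y))$, and is compatible with the functional calculi involved, so that $u=\exp(2\pi i P)$ transports to $\exp(\pi i(\mathfrak{F}+I))$ on the nose; and (ii) the bookkeeping of orientation conventions — those of the six-term sequence on the one hand, and of the unitary-versus-cycle descriptions of $K_1(C^*_{\redg}(G))$ on the other — so that no spurious sign survives in the final identification. Neither is difficult, but both are easy to get wrong, and the cleanest presentation is probably to phrase $\del_0$, $\maM_1$ and the KK-class of $(L^2_G(Y),\pi_\C,\mathfrak{F})$ all through the same functional calculus on $\mathfrak{F}$, after which the commutativity is essentially tautological.
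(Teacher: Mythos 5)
Your proof is correct, and it reaches the statement by a route that differs in its mechanics from the paper's, even though the two share the same skeleton. The paper first identifies $\del_0[P]$ as a Kasparov class by comparing the Roe extension with the multiplier-algebra extension $0\to C^*_G(Y,\maE_Y)\to M^s(C^*_G(Y,\maE_Y))\to Q^s(C^*_G(Y,\maE_Y))\to 0$ and invoking Blackadar's Proposition 17.5.5, obtaining the odd cycle $(C^*_G(Y,\maE_Y), F)$ with $F=2P-1$; it then computes $\maM_1$ as an explicit Kasparov product with the Morita cycle, and verifies by hand, using the isomorphism $L^2_G(Y)\otimes_{\lambda}\maE_G\simeq\maE_Y$ of Proposition \ref{tensorisom}, that the transported operator $\mathfrak{F}_0$ on $C^*_G(Y,\maE_Y)\otimes_{C^*_G(Y,\maE_Y)}L^2_G(Y)\simeq L^2_G(Y)$ coincides with $\mathfrak{F}$. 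You instead stay in the unitary picture of $K_1$ throughout: you use the exponential formula $\del_0[P]=[\exp(2\pi iP)]$ (which the paper itself records in the remark following its proof), transport this unitary through the multiplier-level extension of $\Phi_*^{-1}$ by functional calculus to get $[-\exp(\pi i\mathfrak{F})]$, and only at the end convert back to a cycle via the standard dictionary $KK^1(\C,C^*_{\redg}G)\cong K_1(C^*_{\redg}G)$ sending $(E,\mathfrak{F})$ to $\del[\tfrac12(1+\mathfrak{F})]=[\exp(\pi i(\mathfrak{F}+1))]$. What your route buys is that the explicit tensor-product identification $\mathfrak{F}_0=\mathfrak{F}$ is absorbed into the (soft) compatibility of the extended isomorphism with functional calculus; what the paper's route buys is an explicit description of the product cycle on $L^2_G(Y)$ within the KK framework, which is what is reused afterwards (Lemma \ref{compactperturb} and the comparison with $\mu_{BC}$). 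The two caveats you flag are the right ones and are genuine but minor: for (i), note that the multiplier of $\maK_{C^*_{\redg}(G)}(L^2_G(Y))$ determined by $2P-I$ really is the operator $\mathfrak{F}$ of the statement because $\maK(L^2_G(Y))\cdot\maE_Y$ is dense in $\maE_Y\simeq L^2_G(Y)\otimes_{\lambda}\maE_G$, so $\Phi_*(\mathfrak{F})$ and $2P-I$ agree after right multiplication by all of $C^*_G(Y,\maE_Y)$ and hence agree; for (ii), the orientation conventions are treated with exactly the same looseness in the paper, so your level of care matches the source. Also keep in mind, as the paper does, that fullness of $L^2_G(Y)$ (automatic e.g. for free actions, otherwise one passes to $\maE_{Y'}$) is needed for the stability isomorphism you invoke to be the map $\maM_1$.
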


%

\begin{proof}
{{Recall  that $P\in D^*_G(Y, \maE_Y)$, and descends to a self-adjoint idempotent in the quotient $C^*$-algebra $Q^*_G(Y,\maE_Y)$. In order to deduce the expected representative of the Kasparov class $\pa [P]$ in $KK_1(\C,C^*_G(Y,\maE_Y))$, we simply apply Proposition 17.5.5 in \cite{Blackadar}, as suggested to us by the referee. Indeed, let us denote by $\iota: D^*_G(Y, \maE_Y)\hookrightarrow M^s (C^*_G(Y, \maE_Y))$ the inclusion  in the multiplier $C^*$-algebra $M^s (C^*_G(Y, \maE_Y))$ (with its strict topology). See again \cite{Blackadar}. Then $\iota$ induces the inclusion, still denoted $\iota$, of $Q^*_G(Y,\maE_Y)$ in the quotient algebra $Q^s (C^*_G(Y, \maE_Y))$ given by $M^s (C^*_G(Y, \maE_Y))/C^*_G(Y, \maE_Y)$. We thus have the following commutative diagram of $C^*$-algebra exact sequences:}}

\begin{eqnarray}\label{ExactSequences}
{{\begin{CD}
0\to C^*_G(Y,\maE_Y) @>>> D^*_G(Y,\maE_Y) @>>> Q^*_G(Y,\maE_Y)\to 0 \\
@V{=}VV   @ VV{\iota} V @VV{\iota}V \\
0\to C^*_G(Y,\maE_Y) @>>> M^s (C^*_G(Y, \maE_Y)) @>>> Q^s (C^*_G(Y, \maE_Y))\to 0\\
\end{CD}}}
\end{eqnarray}

{{The boundary map for the first sequence can then be deduced from the boundary map for the second sequence, indeed, one has}}
$$
{{\del_0=\del\circ \iota_*\; : \; K_0(Q^*_G(Y,\maE_Y)) \stackrel{\iota_*}{\longrightarrow} K_0(Q^s (C^*_G(Y, \maE_Y)) \stackrel{\del}{\longrightarrow} K_1(C^*_G(Y,\maE_Y)),}}
$$
{{where $\del$ is the isomorphism described in \cite{Blackadar}. From this discussion we deduce that when viewed through the isomorphism $K_1(C^*_G(Y,\maE_Y))\simeq KK_1 (\C, C^*_G(Y,\maE_Y))$, the class $\del_0[P]$ is represented by the cycle}}
$$
{{\left(C^*_G(Y, \maE_Y),  F \right) \text{ where } F=2P-1. }}
$$
{{Notice that $F^2-I = 4 (P^2-P) \in C^*_G(Y, \maE_Y)$. }} Now the transport of $F$ through the identification $ C^*_G(Y, \maE_Y)\otimes_{ C^*_G(Y, \maE_Y)} L^2_G(Y)\simeq L^2_G(Y)$  is the operator $\mathfrak{F}_0$ defined, on the dense submodule generated by $\Phi_*^{-1}(T)(\zeta)$ with $T\in  C^*_G(Y, \maE_Y)$ and $\zeta\in L^2_G(Y)$, by
$$
\mathfrak{F}_0 \left(\Phi_*^{-1}(T)(\zeta)\right)  = \Phi_*^{-1} (F\circ T) (\zeta).
$$
{In order} to show that $\mathfrak{F}_0=\mathfrak{F}$, it suffices to use the isomorphism $L^2_G(Y)\otimes_{\lambda} \maE_G \simeq \maE_Y$ of Proposition \ref{tensorisom},  and to compute the resulting operator arising from $\mathfrak{F}_0\otimes_{\lambda} \id$ on $\maE_Y$.   Let then $S\in \maK_{C^*_{\redg}(G)} (L^2_G(Y))$ be such that $\Phi_*S= \Phi\circ (S\otimes_\lambda \id)\circ \Phi^{-1} = T$ and  let $\xi\in \maE_G$ be given. Then we obtain
$$
(\mathfrak{F}_0\otimes_\lambda \id) (S(\zeta)\otimes_\lambda \xi)  = ( \mathfrak{F}_0 \circ S)(\zeta) \otimes_\lambda \xi
$$
while
$$
(F\circ \Phi) (S\zeta\otimes_\lambda \xi)  =  [\Phi\circ (\mathfrak{F}_0\otimes_\lambda \id)] (S\zeta\otimes_\lambda \xi).
$$
Hence the proof of Proposition \ref{maM} is complete.

\end{proof}

\begin{remark}\
{{We have used in  the previous proof Proposition 17.5.5 in \cite{Blackadar} to identify $\partial [P]$. Since only the idea of the proof of that proposition is given  in \cite{Blackadar},  we point out that $\partial [P]$ is by definition represented in $K_1 (C^*_G(Y, \maE_Y))$ by the multiplier unitary $e^{2i\pi P}$, which obviously differs from $I$ by an element of $C^*_G(Y, \maE_Y)$ and also belongs to $D^*_G(Y, \maE_Y)$. Hence the identification of the corresponding Kasparov cycle in $KK_1(\C, C^*_G(Y, \maE_Y))$ is standard.}}
\end{remark}


\medskip

We now proceed to compute $(\mu_{BC}^1\circ \maP_0)[P]$.
Let again  $Y\rtimes_r G$ be the groupoid induced by the $G$-action on $Y$, i.e. pulled back using $r$ and defined in Section \ref{metricGspace}. We construct the Hilbert $C^*_{\redg}(G)$-module $\maE_Y\rtimes G$ as usual, see for instance \cite{Tu}.
The inner product and module structure of this module  are described explicitly by
\begin{enumerate}
\item For $\varphi\in C_c(G)$ and  $\xi\in C_c(Y\rtimes_r G)$:
$$
(\xi \varphi)(y,g): = \sum_{g'\in G_{\rho(y)}} \xi(y, {g'}^{-1}) \varphi(g'g)
$$
\item For $\eta,\xi\in C_c(Y\rtimes_r G)$ and $g\in G$:
$$
<\eta_1,\eta_2>(g): = \sum_{g'\in G_{r(g)}}\int_{y\in Y_{r(g')}}  {\overline{\eta_1(y,g')}}\, \eta_2 (y, g' g) d\mu_{r(g')}(y).
$$
\end{enumerate}

The Hilbert $C^*_{\redg}(G)$-module $\maE_Y\rtimes G$ carries a representation $\pi_{Y\rtimes_r G}$ of $C_0(Y)\rtimes_{red} G\cong C^*_{\redg}(Y\rtimes_r G)$ given,
for $\phi\in C_c(Y\rtimes_r G)\subseteq C^*_{\redg}(Y\rtimes_r G)$ and $\xi \in C_c(Y\rtimes_r G)\subseteq \maE_Y\rtimes G$, by:
$$
\pi_{Y\rtimes_r G} (\phi)(\xi)(y,g):= \sum_{g'\in G^{r(g)}}    \phi(y,g')  \; \xi(yg', g'^{-1}g) =\sum_{g_1\in G_{s(g)}} \phi(y, yg_1^{-1})\; \xi (ygg_1^{-1}, g_1).
$$

Another interpretation of $\maE_Y\rtimes G$ is by considering the composition Hilbert module over $C^*_{\redg} (G)$ given by $\maE_Y\otimes_{C_0(X),r} C_{\redg}^*(G)$ where we view $C^*_{\redg} (G)$ as a Hilbert $C^*_{\redg} (G)$-module where $C_0(X)$ represents through multiplication with pull-backs by $r^*$. More precisely,
the map $\Psi: C_c(Y)\otimes_{C_0(X),r} C_c(G)\rightarrow C_c(Y\rtimes_r G)$ given by:
\begin{eqnarray}\label{iso1}
\Psi(\xi\otimes f)(y,g):= \xi(y)f(g), \quad \text{ for }\xi\in C_c(Y), f\in C_c(G),
\end{eqnarray}
can be easily seen to be an isometric isomorphism of Hilbert modules.

%

Recall the cut-off function $c\in C_c(Y, [0, 1])$ from Definition (\ref{cutoff}).

\begin{proposition}\label{isometry}
There is an isometry of Hilbert $C^*_{\redg} (G)$-modules $I: L^2_G (Y)\rightarrow \maE_Y\rtimes G$ given by:
$$
I(\xi)(y,g):= \sqrt{c}(y)\xi(yg), \text{ for }\xi\in C_c(Y).
$$
The range of $I$ coincides with the Hilbert submodule which is the range of $\pi_{Y\rtimes_r G} (e)$ with $e$ the self-adjoint idempotent in $C_c(Y\rtimes_r G)$ given by $e(y, g) := \sqrt{c}(y)\sqrt{c}(yg)$.
\end{proposition}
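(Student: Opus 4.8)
The plan is to check, on the dense submodule $C_c(Y)\subset L^2_G(Y)$, that the explicit formula for $I$ gives a $C^*_{\redg}(G)$-linear isometry, to extend it by continuity, and then to identify its range by computing $II^*$. First I would verify that $I$ maps $C_c(Y)$ into $C_c(Y\rtimes_r G)$: if $I(\xi)(y,g)=\sqrt{c}(y)\xi(yg)\ne 0$ then $y\in\Supp(c)$ and $yg\in\Supp(\xi)$, so $(y,g)$ lies in the preimage of the compact set $\Supp(c)\times\Supp(\xi)$ under the proper map $(y,g)\mapsto (y,yg)$ coming from $G$-properness; hence $I(\xi)$ is compactly supported. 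Right $C_c(G)$-linearity, $I(\xi f)=I(\xi)f$, then follows by expanding the convolution defining the action on $L^2_G(Y)$ and substituting $\gamma=g'g$, which is a bijection from $s^{-1}(r(g))$ onto $s^{-1}(s(g))$; the two sides become term-by-term equal.

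The substantive step is the isometry identity $\langle I\xi,I\eta\rangle=\langle\xi,\eta\rangle$ in $C^*_{\redg}(G)$. Writing out $\langle I\xi,I\eta\rangle(g)=\sum_{g'\in G_{r(g)}}\int_{Y_{r(g')}}c(y)\,\overline{\xi(yg')}\,\eta(yg'g)\,d\mu_{r(g')}(y)$, I would apply the invariance axiom of the $\rho$-system to each summand in the form $\int_{Y_{r(g')}}F\,d\mu_{r(g')}=\int_{Y_{s(g')}}F(\,\cdot\, g'^{-1})\,d\mu_{s(g')}$. Since $g'\in G_{r(g)}$ forces $s(g')=r(g)$, all the resulting measures coincide with $\mu_{r(g)}$; after interchanging the (finite, by properness) sum with the integral one is left with $\int_{Y_{r(g)}}\big(\sum_{g'\in G_{r(g)}}c(zg'^{-1})\big)\overline{\xi(z)}\,\eta(zg)\,d\mu_{r(g)}(z)$. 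Reindexing $g'\mapsto g'^{-1}$, a bijection $s^{-1}(\rho(z))\to r^{-1}(\rho(z))=G^{\rho(z)}$, the bracket is $\sum_{h\in G^{\rho(z)}}c(zh)$, which equals $1$ by the cutoff normalization of Definition \ref{cutoff}; this yields exactly $\langle\xi,\eta\rangle(g)$. Density of $C_c(Y)$ then extends $I$ to an isometry of Hilbert $C^*_{\redg}(G)$-modules.

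For the range statement I would compute the Hilbert-module adjoint $I^*$ (it exists since $I$ is bounded) from $\langle I\zeta,\xi\rangle=\langle\zeta,I^*\xi\rangle$, running the same change of variables as in the isometry step and using that $\mu_{r(g)}$ has full support; this gives $(I^*\xi)(u)=\sum_{h\in G^{\rho(u)}}\sqrt{c}(uh)\,\xi(uh,h^{-1})$ for $\xi\in C_c(Y\rtimes_r G)$. A further substitution $g'=gh$ then identifies $II^*\xi(y,g)=\sqrt{c}(y)\sum_{g'\in G^{r(g)}}\sqrt{c}(yg')\,\xi(yg',g'^{-1}g)$ with $\pi_{Y\rtimes_r G}(e)\xi(y,g)$, where $e(y,g)=\sqrt{c}(y)\sqrt{c}(yg)$. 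Since $I$ is an isometry, $I^*I=\id$, so $II^*$ is the projection onto the closed, complemented submodule $\Range(I)$; combined with $II^*=\pi_{Y\rtimes_r G}(e)$ this gives $\Range(I)=\Range(\pi_{Y\rtimes_r G}(e))$ (and in passing that $\pi_{Y\rtimes_r G}(e)$ is a projection; that $e=e^*=e\ast e$ in $C_c(Y\rtimes_r G)$ was already recorded when recalling the descent map, and can in any case be checked in one line from $\sum_{g'\in G^{\rho(y)}}c(yg')=1$).

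I expect no analytic difficulty: every sum that appears is finite because the $G$-action on $Y$ is proper and $\Supp(c)$ is compact, and no completion issues arise beyond routine density. The main place where care is needed — and where one can easily slip — is keeping the source/range constraints on the groupoid elements consistent through the three reindexings ($\gamma=g'g$, $z=yg'$, $g'=gh$), checking each time that the substitution is a genuine bijection of the relevant $s$- or $r$-fibres; and noting that the formula obtained for $I^*$ on $C_c(Y\rtimes_r G)$ is automatically the true adjoint once $I$ is known to be a well-defined bounded map on a dense submodule.
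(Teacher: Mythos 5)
Your proposal is correct and follows essentially the same route as the paper: one checks directly that $I$ is adjointable with $I^*(\eta)(y)=\sum_{g\in G_{\rho(y)}}\sqrt{c}(yg^{-1})\,\eta(yg^{-1},g)$ (your formula is this one re-indexed via $h=g^{-1}$), and then verifies by the same fibrewise changes of variables that $I^*I=\id$ (equivalently the inner-product identity you derive from the cutoff normalization and the equivariance of the $\rho$-system) and that $II^*=\pi_{Y\rtimes_r G}(e)$, which identifies the range. Your extra bookkeeping (compact supports via properness, $C_c(G)$-linearity, finiteness of the sums) just makes explicit what the paper's ``straightforward computation'' leaves to the reader.
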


\begin{proof}
This is a straightforward computation. It is easy to see that $I$ is adjointable and that $I^*$ is given for $\xi\in C_c(Y)$ by
$$
I^*(\eta) (y) = \sum_{g\in G_{\rho(y)}} \sqrt{c}(yg^{-1}) \, \eta (yg^{-1}, g).
$$
Therefore we get by direct inspection
$$
I^*\circ I = \id \text{ and } (I\circ I^*) (\eta) (y, g) = \sqrt{c}(y) \sum_{g'\in G_{s(g)}} \sqrt{c}(yg {g'}^{-1}) \, \eta (yg{g'}^{-1}, g'), \text{ for }\eta\in C_c(Y\rtimes_r G).
$$
This last formula is precisely $\pi_{Y\rtimes_r G} (e) (\eta) (y, g)$.
\end{proof}

Let as before $F=2P-I \, \in D^*_G(Y,\maE_Y)\subseteq \maL_{C_0(X)} (\maE_Y)$.
Then $F$ yields the operator $F\otimes \id_{C^*_{\redg}G} \in \maL(\maE_Y\otimes_{C_0(X),r}C^*_{\redg}G)$ and hence using the isomorphism from (\ref{iso1})
an operator $\bar{F}$ on the Hilbert module $\maE_Y\rtimes G$. Moreover, and as explained above, there is a well defined operator $\mathfrak{F}$  on $L^2_G (Y)$
which is associated with  $F$ through the isomorphism $\Phi_*$ of Proposition (\ref{Cstariso}). Since $F$ is $G$-invariant, it is easy to see that the operator $\mathfrak{F}$ coincides with the operator  $F$ on the the common dense domain $C_c(Y)$. {{In order}} to complete  the proof of the commutativity of Diagram \eqref{BaumConnesassembly},
we prove the following

\begin{lemma}\label{compactperturb}
We have $
I^*\circ \bar{F}\circ I-  \mathfrak{F}\; \in \; \maK_{C^*_{\redg}(G)}\, \left(L^2_G(Y)\right)
$
\end{lemma}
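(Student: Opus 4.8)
The plan is to compute $I^{*}\bar F I$ explicitly on the dense submodule $C_{c}(Y)\subset L^{2}_{G}(Y)$, to recognise its difference with $\mathfrak F$ as the average of a compactly supported $C_{0}(X)$-compact operator, and then to invoke the identification $\maK_{C^{*}_{\redg}(G)}(L^{2}_{G}(Y))\cong C^{*}_{G}(Y,\maE_{Y})$ of Proposition \ref{Cstariso}. First I would unwind the definitions: for $\zeta\in C_{c}(Y)$, Proposition \ref{isometry} gives $I(\zeta)(y,g)=\sqrt c(y)\,\zeta(yg)$ and the explicit formula for $I^{*}$, while under the identification $\maE_{Y}\rtimes G=\maE_{Y}\otimes_{C_{0}(X),r}C^{*}_{\redg}(G)$ with $\bar F=F\otimes\id$ the operator $\bar F$ acts fibrewise in the $Y$-variable, i.e.\ $(\bar F\eta)(\,\cdot\,,g)=F_{r(g)}\bigl(\eta(\,\cdot\,,g)\bigr)$ on $L^{2}(Y_{r(g)},\mu_{r(g)})$. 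Composing the three maps and using the $G$-invariance of $F$ (which, through the unitaries $V_{g}$ implementing the $G$-action on the field $(L^{2}(Y_{x},\mu_{x}))_{x\in X}$, intertwines $F_{r(g)}$ with $F_{s(g)}$), a direct calculation gives, for each $x\in X$ and on $L^{2}(Y_{x},\mu_{x})$,
$$
(I^{*}\bar F I)_{x}\;=\;\sum_{g\in G_{x}}M_{c_{g}}\,F_{x}\,M_{c_{g}},\qquad c_{g}:=\sqrt c(\,\cdot\, g^{-1})\big|_{Y_{x}},
$$
where $M_{\varphi}$ denotes multiplication by $\varphi$; the sum is locally finite since the $G$-action on $Y$ is proper and $Y$ is $G$-compact, so that $c$ is compactly supported.

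Next I would rewrite the difference. The cutoff property gives $\sum_{g\in G_{x}}c_{g}^{2}\equiv 1$ on $Y_{x}$, and $\mathfrak F$ coincides with $F$ on $C_{c}(Y)$, so fibrewise $(I^{*}\bar F I-\mathfrak F)_{x}=\sum_{g\in G_{x}}M_{c_{g}}[F_{x},M_{c_{g}}]$. Writing $M_{c_{g}}=V_{g^{-1}}M_{\sqrt c}V_{g}$ and using $V_{g}F_{x}V_{g}^{-1}=F_{r(g)}$, the substitution $\gamma=g^{-1}$ turns the first identity into
$$
(I^{*}\bar F I)_{x}\;=\;\sum_{\gamma\in G^{x}}V_{\gamma}\bigl(M_{\sqrt c}\,F_{s(\gamma)}\,M_{\sqrt c}\bigr)V_{\gamma}^{-1}\;=\;\Av\bigl(\pi_{Y}(\sqrt c)\,F\,\pi_{Y}(\sqrt c)\bigr)_{x}.
$$
Since $\pi_{Y}(\sqrt c)F\pi_{Y}(\sqrt c)=\pi_{Y}(c)F+\pi_{Y}(\sqrt c)[F,\pi_{Y}(\sqrt c)]$, since $\Av$ is additive, and since $\Av(\pi_{Y}(c)F)=F$ by relation (\ref{Average}) (as $F$ is $G$-invariant), one obtains $\Av(\pi_{Y}(\sqrt c)F\pi_{Y}(\sqrt c))=F+\Av(\pi_{Y}(\sqrt c)[F,\pi_{Y}(\sqrt c)])$. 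Adjointable operators on the continuous field $\maE_{Y}$ are determined by their fibres, so this identity of fibrewise actions lifts to $\Phi_{*}(I^{*}\bar F I)=F+\Av\bigl(\pi_{Y}(\sqrt c)[F,\pi_{Y}(\sqrt c)]\bigr)$ in $\maL_{C_{0}(X)}(\maE_{Y})$, and therefore $\Phi_{*}(I^{*}\bar F I-\mathfrak F)=\Av\bigl(\pi_{Y}(\sqrt c)[F,\pi_{Y}(\sqrt c)]\bigr)$.

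To conclude, observe that $\pi_{Y}(\sqrt c)[F,\pi_{Y}(\sqrt c)]$ is compactly supported (because $\sqrt c\in C_{c}(Y)$, $F$ has finite propagation, and $d_{Y}$ satisfies the properness axiom) and belongs to $\maK_{C_{0}(X)}(\maE_{Y})$ (because $F\in D^{*}_{G}(Y,\maE_{Y})$). By the averaging argument carried out in the proof of Proposition \ref{Cstariso} (following \cite{Paterson2}), the average of a compactly supported $C_{0}(X)$-compact operator lies in $C^{*}_{G}(Y,\maE_{Y})$; hence $\Phi_{*}(I^{*}\bar F I-\mathfrak F)\in C^{*}_{G}(Y,\maE_{Y})$. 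Since $\Phi_{*}$ restricts to the isomorphism $\maK_{C^{*}_{\redg}(G)}(L^{2}_{G}(Y))\xrightarrow{\;\cong\;}C^{*}_{G}(Y,\maE_{Y})$ of Proposition \ref{Cstariso} and is injective on all adjointable operators (because the regular representation $\lambda\colon C^{*}_{\redg}(G)\to\maL_{C_{0}(X)}(\maE_{G})$ is injective), it follows that $I^{*}\bar F I-\mathfrak F\in\maK_{C^{*}_{\redg}(G)}(L^{2}_{G}(Y))$, as claimed.

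The step I expect to be the main obstacle is the rewriting in the second paragraph: one must keep the source/range conventions and the directions of the unitaries $V_{g}$ straight, check that the locally finite sums over $G_{x}$ may indeed be rearranged by the substitution $\gamma=g^{-1}$, and carefully justify moving back and forth between adjointable operators on $L^{2}_{G}(Y)$, adjointable operators on $\maE_{Y}$ (via $\Phi_{*}$), and their fibrewise actions on the $L^{2}(Y_{x},\mu_{x})$. Once the identity $\Phi_{*}(I^{*}\bar F I-\mathfrak F)=\Av\bigl(\pi_{Y}(\sqrt c)[F,\pi_{Y}(\sqrt c)]\bigr)$ is in place, the remainder is a direct appeal to results already established in the paper.
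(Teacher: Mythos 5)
Your argument is correct and follows essentially the same route as the paper: you compute $I^*\circ\bar F\circ I$ on $C_c(Y)$, recognise $\Phi_*(I^*\bar F I)-F$ as the average $\Av\bigl(\pi_Y(\sqrt c)[F,\pi_Y(\sqrt c)]\bigr)$ (which is exactly the paper's operator $T$), show this average lies in $C^*_G(Y,\maE_Y)$ by the compactly-supported averaging argument from the proof of Proposition \ref{Cstariso}, and transport back through the injective map $\Phi_*$. The only point to add is the paper's opening reduction: an element of $D^*_G(Y,\maE_Y)$ is merely a norm limit of finite-propagation operators, so you should first assume that $F$ has finite propagation (which you use when asserting that $\pi_Y(\sqrt c)[F,\pi_Y(\sqrt c)]$ is compactly supported) and then conclude in general by a density argument, using that $\maK_{C^*_{\redg}(G)}\bigl(L^2_G(Y)\bigr)$ is norm closed and that $I^*\bar F I-\mathfrak{F}$ depends on $F$ linearly and norm-continuously.
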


\begin{proof}\
We may assume in this proof that $F$ has finite propagation. Indeed since $\maK_{C^*_{\redg}(G)}(L^2_G(Y))$ is closed and since the proof only uses the fact that $F\in D^*_G(Y,\maE_Y)$, an easy density argument then allows to conclude. Recall from Proposition \ref{Cstariso} that the map $\Phi_*$ is an isometric isomorphism from $\maK_{C^*_{\redg}(G)} (L^2_G(Y))$ to the $C^*$-algebra $C^*_G (Y, \maE_Y)$.
So, we shall prove that the operator
$$
\Phi_*[I^*\circ \bar{F}\circ I]- F\; = \; \Phi\circ \left([I^*\circ \bar{F}\circ I]\otimes_\lambda \id_{\maE_G}\right) \circ \Phi^{-1} -  F \; \in \; C^*_G (Y, \maE_Y).
$$
But if we fix $\xi\in C_c(Y) \subset L^2_G(Y)$ and $\varphi\in C_c (G) \subset \maE_G$ and we set $\eta:=\Phi (\xi\otimes \varphi)$ then we may write
\begin{eqnarray*}
\Phi_*[I^*\circ \bar{F}\circ I] (\eta) (y) & = & \sum_{g\in G_{\rho (y)}} (I^*\circ {\bar F}\circ I) (\xi) (yg^{-1}) \varphi (g)\\
& = & \sum_{g\in G_{\rho (y)}}  \varphi (g) \sum_{k\in G_{r(g)}} {\sqrt c} (yg^{-1} k^{-1}) {\bar F} (I\xi) (yg^{-1} k^{-1}, k).
\end{eqnarray*}
But identifying the operator $F$ with the corresponding field $(F_x)_{x\in X}$, we can write
\begin{eqnarray*}
{\bar F} (I\xi) (yh^{-1}, h) & = & F\left(z\mapsto (I\xi) (z, h) \right) (yh^{-1})\\
& = & F ({\sqrt c} (h\xi)) (yh^{-1})\\
& = & [F, \pi_Y({\sqrt c})] (h\xi) (yh^{-1}) + (\pi_Y({\sqrt c}) \circ F) (h\xi) (yh^{-1})\\
& = & [F, \pi_Y({\sqrt c})] (h\xi) (yh^{-1}) + {\sqrt c} (yh^{-1}) F(\xi) (y).
\end{eqnarray*}
This is more precisely and for a fixed $h\in G_{\rho (y)}$ given by
$$
[F, \pi_Y({\sqrt c})]_{r(h)} ((h\xi)_{r(h)}) (yh^{-1}) + {\sqrt c} (yh^{-1}) F_{\rho(y)} (\xi_{\rho(y)}) (y),
$$
where only the restriction of $\xi$ to $Y_{\rho(y)}$ is involved.
We deduce from this computation
\begin{multline*}
(I^*\circ {\bar F}\circ I) (\xi) (y)=\sum_{h\in G_{\rho (y)}} {\sqrt c} (yh^{-1})\left([F, \pi_Y({\sqrt c})] (h\xi) (yh^{-1})  + {\sqrt c} (yh^{-1}) F(\xi) (y)  \right)\\ = F_{\rho (y)}(\xi)(y) + \sum_{h\in G_{\rho (y)}} {\sqrt c} (yh^{-1}) [F, \pi_Y({\sqrt c})]_{r(h)} (h\xi) (yh^{-1}).
\end{multline*}
Therefore,
\begin{multline*}
\Phi_*[I^*\circ \bar{F}\circ I] (\eta) (y)  =  \sum_{g\in G_{\rho (y)}} \varphi (g) \sum_{k\in G_{r(g)}} {\sqrt c} (yg^{-1} k^{-1}) \\ \left([F, \pi_Y({\sqrt c})] (k\xi) (yg^{-1}k^{-1}) + {\sqrt c} (yg^{-1} k^{-1}) F(\xi) (yg^{-1}) \right)
\end{multline*}
Hence we get
$$
\Phi_*[I^*\circ \bar{F}\circ I] (\eta) (y)  =   \Phi (\mathfrak{F}\xi\otimes \varphi) (y) +   \sum_{g\in G_{\rho (y)}} \varphi (g) \sum_{k\in G_{r(g)}} {\sqrt c} (yg^{-1} k^{-1})  [F, \pi_Y({\sqrt c})]_{r(g)} (k\xi) (yg^{-1}k^{-1}).
$$
Now, $ \Phi (\mathfrak{F}\xi\otimes \varphi)$ is nothing but $F (\eta)$ by definition of $\mathfrak{F}$. On the other hand, we define out of our operator $F$ the operator $T$ on $C_c(Y) \subset \maE_Y$ by setting for our $\xi\in C_c(Y)$:
$$
T (\xi) (y) := \sum_{k\in G_{\rho(y)}} {\sqrt c} (yk^{-1}) [F, \pi_Y({\sqrt c})]_{r(k)} (k\xi) (yk^{-1}) =\sum_{k\in G_{\rho (y)}} \left\{ k^{-1} \left(\pi_Y({\sqrt c})\circ [F, \pi_Y({\sqrt c})]\right)\right\}_{\rho(y)} (\xi) (y).
$$
Then we check that  $T$ is well defined since ${\sqrt c}$ is compactly supported, and it is obviously $C_c(X)$-linear. Also it extends to a $C_0(X)$-adjointable operator on $\maE_Y$ by direct inspection, in fact it is self-adjoint since $F$ and $\pi_Y({\sqrt c})$ are self-adjoint. Moreover, since the commutator $[F, \pi_Y({\sqrt c})]$ is compact on the Hilbert module $\maE_Y$, so is $T$.
Indeed, if $\eta_1, \eta_2$ are elements of $C_c(Y)\subset \maE_Y$ then
$$
\sum_{k\in G_{\rho(y)}} {\sqrt c} (yk^{-1}) \theta_{\eta_1, \eta_2} (k\xi) (yk^{-1}) = \left(\sum_{k\in G_{\rho(y)}} \pi_{\rho (y)} (k^{-1} {\sqrt c}) \circ (\theta_{k^{-1}\eta_1, k^{-1}\eta_2})_{\rho (y)} \right) (\xi) (y).
$$
Now notice that for any compactly supported function $\zeta$ on $Y$, the set $K_{\zeta,c}$  defined by
$$
K_{\zeta,c}:=\{ g\in G| \text{ there exists } y\in \supp(c) \text{ such that } yg\in \supp(\zeta)\},
$$
 is compact in $G$,  since the $G$-action on $Y$ is proper. So,  we see that the above sum is finite and hence we get a compact operator. One then deduces that $T$ is compact and also by easy verification that this operator $T$ has finite propagation if $F$ does and is in general an element of $C^*(Y, \maE_Y)$. We thus get for any $x\in X$
$$
[\Phi_*[I^*\circ \bar{F}\circ I] (\eta)  -  F (\eta)] _{x}= \sum_{g\in G_{x}}  T (g^{-1}\xi) \varphi (g)= T_x (\Phi (\xi\otimes _\lambda \varphi) )= T_x (\eta).
$$
\end{proof}

\begin{corollary}
The $KK$-cycles $(I(L^2_G (Y)), \lambda_\C,\pi_{Y\rtimes_r G}(e)\bar{F}\pi_{Y\rtimes_r G}(e))$ and $(L^2_G (Y), \lambda_\C, \maF)$ are equivalent.
\end{corollary}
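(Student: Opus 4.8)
The plan is to derive the corollary formally from Proposition \ref{isometry} and Lemma \ref{compactperturb}, with no additional analysis: the substantive estimate has already been carried out in Lemma \ref{compactperturb}, and what is left is only to repackage it as an equivalence of Kasparov cycles. First I would record the elementary consequences of Proposition \ref{isometry}. There the isometry $I:L^2_G(Y)\to\maE_Y\rtimes G$ is shown to have range exactly $I(L^2_G(Y))=\pi_{Y\rtimes_r G}(e)\bigl(\maE_Y\rtimes G\bigr)$, so that $I^*I=\id$, $II^*=\pi_{Y\rtimes_r G}(e)$, and hence $\pi_{Y\rtimes_r G}(e)\,I=I$ and $I^*\,\pi_{Y\rtimes_r G}(e)=I^*$. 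In particular, regarded as a map onto $I(L^2_G(Y))$, $I$ is a unitary isomorphism of Hilbert $C^*_{\redg}(G)$-modules.

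Next I would invoke the standard fact that a unitary isomorphism of Hilbert modules transports a Kasparov cycle to an isomorphic, hence equivalent, one. Conjugation by $I$ carries the first cycle $\bigl(I(L^2_G(Y)),\lambda_\C,\pi_{Y\rtimes_r G}(e)\bar{F}\pi_{Y\rtimes_r G}(e)\bigr)$ to the cycle $\bigl(L^2_G(Y),\lambda_\C,\,I^*\pi_{Y\rtimes_r G}(e)\bar{F}\pi_{Y\rtimes_r G}(e)\,I\bigr)$ on $L^2_G(Y)$, with the same (scalar) representation $\lambda_\C$; and by the identities above the operator here is simply $I^*\bar{F}I$. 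Being isomorphic to a Kasparov cycle, $\bigl(L^2_G(Y),\lambda_\C,I^*\bar{F}I\bigr)$ is again a Kasparov $(\C,C^*_{\redg}(G))$-cycle.

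It then remains to compare $\bigl(L^2_G(Y),\lambda_\C,I^*\bar{F}I\bigr)$ with $\bigl(L^2_G(Y),\lambda_\C,\maF\bigr)$. These are cycles on the same module with the same representation, and by Lemma \ref{compactperturb} their operators differ by $K:=I^*\bar{F}I-\maF\in\maK_{C^*_{\redg}(G)}(L^2_G(Y))$. The affine path $t\mapsto\maF+tK$, $t\in[0,1]$, is then a norm-continuous family of self-adjoint operators, each commuting exactly with $\lambda_\C(\C)=\C\cdot\id$ and satisfying the Kasparov relations modulo $\maK_{C^*_{\redg}(G)}(L^2_G(Y))$ --- immediate, since the two endpoints do and $tK$ is compact throughout --- so it is an operator homotopy. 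Hence the two cycles are equal in $KK_1(\C,C^*_{\redg}(G))$, and in particular equivalent, which is the assertion of the corollary. I do not expect a genuine obstacle: the analytic heart of the matter is already Lemma \ref{compactperturb}, and the only thing needing attention is the bookkeeping with the compressions $\pi_{Y\rtimes_r G}(e)$ furnished by Proposition \ref{isometry}.
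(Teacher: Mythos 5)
Your argument is correct and follows the same route as the paper: conjugate the compressed cycle by the isometry $I$ of Proposition \ref{isometry} (using $II^*=\pi_{Y\rtimes_r G}(e)$, $I^*I=\id$) to obtain $\bigl(L^2_G(Y),\lambda_\C,I^*\bar F I\bigr)$, then invoke Lemma \ref{compactperturb} to conclude that $I^*\bar F I$ is a compact perturbation of $\mathfrak{F}$, hence the cycles coincide in $KK$. The only difference is that you spell out the standard compact-perturbation/operator-homotopy step that the paper leaves implicit.
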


\begin{proof}
Recall that $\pi_{Y\rtimes_r G}(e)=I\,I^*$ and that, by Lemma \ref{isometry}, $I$ is an isometry which identifies $L^2_G(Y)$ with the range $I(L^2_G (Y))$ in $\maE_Y\rtimes G$. Therefore, using the unitary isomorphism
$$
I^* : I(L^2_G (Y)) \longrightarrow L^2_G(Y),
$$
we deduce that the $KK$-cycle $(I(L^2_G (Y)), \lambda_\C, \pi_{Y\rtimes_r G}(e)\bar{F}\pi_{Y\rtimes_r G}(e))$ is conjugate to the $KK$-cycle $(L^2_G(Y), \lambda_\C, I^* {\bar F} I)$. Now,
applying  Lemma (\ref{compactperturb}), we see that $I^* \bar{F}I$ is a compact perturbation of $\mathfrak{F}$. This implies the assertion.
\end{proof}

Thus we have proved:

\begin{corollary}
The image of $[P]$ under $\mu_{BC}^1\circ\maP_0$ coincides with that under $\maM_1\circ \del_0$.
\end{corollary}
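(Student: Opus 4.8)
The plan is to unwind the two constructions making up $\mu_{BC}^1\circ\maP_0$ and to match the outcome with Proposition \ref{maM}. By definition of the Paschke--Higson map, $\maP_0[P]$ is represented by the odd $G$-equivariant Kasparov cycle $(\pi_Y,\maE_Y,F)$ with $F=2P-I$; recall that $F$ is honestly $G$-invariant and that $F^2-I=4(P^2-P)\in C^*_G(Y,\maE_Y)\subseteq\maK_{C_0(X)}(\maE_Y)$. First I would apply the Kasparov descent $j_G$. Using the identification $\maE_Y\rtimes G\cong\maE_Y\otimes_{C_0(X),r}C^*_{\redg}G$ and the isometric isomorphism $\Psi$ of \eqref{iso1}, the operator $F\otimes\id_{C^*_{\redg}G}$ transports to the operator $\bar F$ on $\maE_Y\rtimes G$, and $j_G(\maP_0[P])$ is represented by the cycle $(\pi_{Y\rtimes_r G},\maE_Y\rtimes G,\bar F)$ in $KK_1(C_0(Y)\rtimes_{\redg}G,C^*_{\redg}G)$; that this is a genuine cycle is the standard descent construction of \cite{Tu,LeGall}.

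Next I would compose with the KM contraction, i.e. take the left Kasparov product with the class $[p_{KM}]=[e]\in KK_0(\C,C_0(Y)\rtimes_{\redg}G)$ of the projection $e(y,g)=\sqrt c(y)\sqrt c(yg)$. Since $[e]$ is represented by a projection, this product is computed by compression: it is represented by the cycle
\[
\bigl(\pi_{Y\rtimes_r G}(e)(\maE_Y\rtimes G),\ \pi_\C,\ \pi_{Y\rtimes_r G}(e)\,\bar F\,\pi_{Y\rtimes_r G}(e)\bigr)
\]
over $(\C,C^*_{\redg}G)$, where $\pi_\C$ is the trivial (scalar) representation. That this compressed triple is a cycle and that the product formula applies rests on $\bar F$ almost commuting with $\pi_{Y\rtimes_r G}(e)$, which follows from $F\in D^*_G(Y,\maE_Y)$ because $e$ is built out of $\sqrt c\in C_c(Y)$. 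By Proposition \ref{isometry} the isometry $I$ identifies $L^2_G(Y)$ with $\pi_{Y\rtimes_r G}(e)(\maE_Y\rtimes G)=I(L^2_G(Y))$, so the cycle above is exactly the first cycle of the Corollary proved just before the statement.

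By that Corollary the cycle above is equivalent to $(L^2_G(Y),\pi_\C,\mathfrak{F})$: conjugating by the unitary $I^*\colon I(L^2_G(Y))\to L^2_G(Y)$ replaces the operator by $I^*\bar F I$, which by Lemma \ref{compactperturb} is a compact perturbation of $\mathfrak{F}$. On the other hand, Proposition \ref{maM} asserts that $\maM_1\circ\del_0[P]$ is represented by precisely $(L^2_G(Y),\pi_\C,\mathfrak{F})$. Hence $\mu_{BC}^1(\maP_0[P])=[p_{KM}]\otimes_{C_0(Y)\rtimes_{\redg}G}j_G(\maP_0[P])=[(L^2_G(Y),\pi_\C,\mathfrak{F})]=\maM_1(\del_0[P])$ in $K_1(C^*_{\redg}G)$, which is the assertion; this completes the even case of Theorem \ref{compatibilityBC}.

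Given Proposition \ref{maM}, Lemma \ref{compactperturb} and the preceding Corollary, no substantial difficulty remains. The only points requiring care are matching the conventions of the Le Gall descent with the explicit operator $\bar F$ defined via $\Psi$, and checking that the KM contraction is implemented by the naive compression by $\pi_{Y\rtimes_r G}(e)$ --- the standard description of the Kasparov product against a projection class, which is legitimate here precisely because $F$ almost commutes with $\pi_Y(C_0(Y))$, i.e. because $F\in D^*_G(Y,\maE_Y)$. The genuine computational content was already absorbed into Lemma \ref{compactperturb}.
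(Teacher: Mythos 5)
Your proposal is correct and follows the same route as the paper: represent $\maP_0[P]$ by $(\pi_Y,\maE_Y,2P-I)$, apply the Le Gall descent and the KM contraction by the Mishchenko projection $e$ to obtain the compressed cycle on $\pi_{Y\rtimes_r G}(e)(\maE_Y\rtimes G)=I(L^2_G(Y))$, and then invoke the preceding corollary (resting on Proposition \ref{isometry} and Lemma \ref{compactperturb}) together with Proposition \ref{maM} to identify both sides with the class of $(L^2_G(Y),\pi_\C,\mathfrak{F})$. Your added justification that the naive compression computes the Kasparov product because $\bar F$ almost commutes with $\pi_{Y\rtimes_r G}(e)$ is exactly the point the paper leaves implicit, so no gap remains.
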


\subsubsection{Proof of Theorem \ref{compatibilityBC} in the odd case}

The similar result in the odd case can be deduced using the space $Y\times \R$ with the anchor map $\rho\circ p_1$ with $p_1: Y\times \R\to Y$ being the first projection. Then the groupoid $G$ needs to be replaced by the groupoid $G\times \Z$. This needs though some extra-arguments and since the direct proof is shorter, we chose it here. So, we now prove by a  direct computation the analogous result in the odd case, i.e.  the commutativity of the following diagram:

\[
\begin{CD}\label{BaumConnesassembly}
K_1(Q^*_G(Y,\maE_Y)) @> \del_1    >> K_0(C^*_G(Y,\maE_Y)) \\
@V\maP_1 VV     @VV \maM_0 V\\
KK^0_G(Y,X) @> \mu^0_{BC} >> K_0(C^*_rG)\\
\end{CD}
\]

\begin{proposition}\label{OddCase}
Given an element $u\in D^*_G(Y, \maE_Y)$ whose projection in $Q^*_G(Y, \maE_Y)$ is a unitary operator, the image under the map $\maM_0\circ \del_1$, of the class of $u$ in $K_1(Q^*_G(Y, \maE_Y))$, is represented  in $KK (\C, C^*_{\redg}(G))$, by the Kasparov $KK$-cycle
$$
\left( L^2_G(Y), L^2_G(Y), \left(\begin{array}{cc}  0 & \mathfrak{U}^*\\ \mathfrak{U} & 0\end{array}\right)\right),
$$
where $\mathfrak{U}$ is the adjointable operator on $L^2_G(Y)$ which corresponds to $u$ under the isomorphism $\Phi_*$ of Proposition (\ref{Cstariso}).
\end{proposition}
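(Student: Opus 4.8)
The plan is to transcribe the proof of Proposition \ref{maM}, interchanging the even and odd Kasparov pictures. First I would realise $\del_1$ through the multiplier algebra exactly as there: with $\iota: D^*_G(Y,\maE_Y)\hookrightarrow M^s(C^*_G(Y,\maE_Y))$ the strictly continuous inclusion, the commutative diagram of extensions \eqref{ExactSequences} gives $\del_1=\del\circ\iota_*$, where now $\del: K_1(Q^s(C^*_G(Y,\maE_Y)))\to K_0(C^*_G(Y,\maE_Y))$ is the standard index isomorphism of \cite{Blackadar}. Since $u\in D^*_G(Y,\maE_Y)$ satisfies $u^*u-I,\ uu^*-I\in C^*_G(Y,\maE_Y)$ and sits inside $M^s(C^*_G(Y,\maE_Y))$, the index-map counterpart of Proposition 17.5.5 of \cite{Blackadar} shows that, viewed through the identification $K_0(C^*_G(Y,\maE_Y))\simeq KK_0(\C,C^*_G(Y,\maE_Y))$, the class $\del_1[u]$ is represented by the even Kasparov cycle
$$
\left(C^*_G(Y,\maE_Y)\oplus C^*_G(Y,\maE_Y),\ \pi_\C,\ \begin{pmatrix} 0 & u^*\\ u & 0\end{pmatrix}\right),
$$
with $\pi_\C$ the representation of $\C$ by scalars, the $\Z_2$-grading given by the two summands, and the off-diagonal operator odd.

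Next I would push this class forward under $\maM_0$, which by the discussion preceding Theorem \ref{compatibilityBC} is the Kasparov product with the even cycle $(L^2_G(Y),\Phi_*,0)$. Because this cycle carries the zero operator and $L^2_G(Y)$ implements a Morita equivalence, the product requires no choice of connection and reduces to the interior tensor product; using the identification $C^*_G(Y,\maE_Y)\otimes_{C^*_G(Y,\maE_Y)}L^2_G(Y)\simeq L^2_G(Y)$ on each summand, one obtains the even $(\C,C^*_{\redg}(G))$-cycle
$$
\left(L^2_G(Y)\oplus L^2_G(Y),\ \pi_\C,\ \begin{pmatrix} 0 & \mathfrak U_0^*\\ \mathfrak U_0 & 0\end{pmatrix}\right),
$$
where $\mathfrak U_0$ is the transported operator, characterised on the dense submodule spanned by the $\Phi_*^{-1}(T)(\zeta)$ with $T\in C^*_G(Y,\maE_Y)$ and $\zeta\in L^2_G(Y)$ by $\mathfrak U_0(\Phi_*^{-1}(T)(\zeta))=\Phi_*^{-1}(u\circ T)(\zeta)$; this makes sense because $C^*_G(Y,\maE_Y)$ is an ideal in $D^*_G(Y,\maE_Y)$, so $u\circ T\in C^*_G(Y,\maE_Y)$, and $\mathfrak U_0^*$ is given by the same formula with $u^*$ in place of $u$.

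It then remains to identify $\mathfrak U_0$ with $\mathfrak U:=\Phi_*^{-1}(u)$, the adjointable operator on $L^2_G(Y)$ corresponding to $u$ under $\Phi_*$. This is the exact odd analogue of the verification $\mathfrak F_0=\mathfrak F$ carried out at the end of the proof of Proposition \ref{maM}: one applies $\cdot\otimes_\lambda\id$ on the isomorphism $L^2_G(Y)\otimes_\lambda\maE_G\simeq\maE_Y$ of Proposition \ref{tensorisom} and checks, on the dense set of elements $S\zeta\otimes_\lambda\xi$ with $S\in\maK_{C^*_{\redg}(G)}(L^2_G(Y))$, $\zeta\in C_c(Y)$ and $\xi\in C_c(G)$, that $\Phi\circ(\mathfrak U_0\otimes_\lambda\id)\circ\Phi^{-1}=u$, i.e. $\Phi_*(\mathfrak U_0)=u=\Phi_*(\mathfrak U)$; since $\Phi_*$ is a $*$-isomorphism this also forces $\mathfrak U_0^*=\mathfrak U^*$, and Proposition \ref{OddCase} follows. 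I expect this last step --- showing that the transport of $u$ through $\,\cdot\otimes_{\Phi_*}L^2_G(Y)$ agrees with $\Phi_*^{-1}(u)$ --- to be the only point requiring real care; every other step is a routine transcription of the even case with the $\Z_2$-grading inserted.
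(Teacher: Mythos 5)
Your proposal is correct, but the first half of your argument follows a genuinely different route from the paper's proof of this proposition. For the identification of $\del_1[u]$, the paper does not reuse the multiplier-algebra diagram of Proposition \ref{maM}; it argues directly: it lifts $\mathrm{diag}(\pi(u),\pi(u^*))$ to a unitary $U\in M_2(D^*_G(Y,\maE_Y))$, sets $P=UP_0U^*$ so that $\del_1[u]=[P]-[P_0]$ by the very definition of the index map, realizes this class in $KK_0(\C,C^*_G(Y,\maE_Y))$ on the module $\Im(P)\oplus C^*_G(Y,\maE_Y)$ with the off-diagonal operator built from $P$, $i_1$, $\pi_1$, checks by hand that $F^2-I$ lies in $C^*_G(Y,\maE_Y)$ (using that the off-diagonal entries $v,w$ of $U$ belong to the ideal), and then conjugates by the unitary $\Psi=\mathrm{diag}(\pi_1\circ U^*,\,I)$ to reach the off-diagonal cycle with entries $u,u^*$. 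You instead invoke naturality of the index map for $\iota\colon D^*_G(Y,\maE_Y)\hookrightarrow M^s(C^*_G(Y,\maE_Y))$ together with the index-map half of Blackadar's Proposition 17.5.5 (the generalized-Fredholm picture of an essentially unitary multiplier), i.e.\ you transcribe the paper's even-case argument from Proposition \ref{maM}. Both routes are legitimate: yours is shorter and uniform with the even case, but it leans on a statement whose proof is only sketched in Blackadar --- which is exactly why the paper felt obliged to add a clarifying remark in the even case and opted for the explicit, self-contained matrix construction in the odd one, where the concrete lift $U$ and the compactness verification come for free. The second half of your argument --- the Kasparov product with the Morita cycle $\left(L^2_G(Y),0\right)$ and the identification of the transported operator $u\otimes_{C^*_G(Y,\maE_Y)}\id$ with $\mathfrak{U}=\Phi_*^{-1}(u)$ via Proposition \ref{tensorisom} --- coincides with the paper's, and your remark that this density argument is the only step requiring care is accurate.
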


\begin{proof}
We know that $uu^*-1, u^*u-1\in C^*_G(Y,\maE_Y)$ by assumption. There exists a unitary (actually in the connected component of the identity) $U\in M_2(D^*_G(Y,\maE_Y))$ such that
\[
 \pi(U)=\begin{bmatrix} \pi(u) & 0\\ 0 & \pi(u^*) \end{bmatrix}\quad  \in U_2(Q^*_G(Y,\maE_Y))
\]

Denote by $P_0$ the projection $\begin{bmatrix} I & 0\\ 0 & 0 \end{bmatrix}$ and define the projection $P= UP_0 U^*$. Then the class $[P]-[P_0]\in K_0(C^*_G(Y,\maE_Y))$ is by definition $\del_1 ([u]$. The corresponding class in $KK^0(\C,C^*_G(Y,\maE_Y))$ is thus given by the Kasparov cycle $(\maE:=\maE^+\oplus \maE^-, F)$, where
$$
\maE^+:= \Im (P)\subseteq C^*_G(Y,\maE_Y)\oplus C^*_G(Y,\maE_Y), \, \maE^-:= C^*_G(Y,\maE_Y) \text{ and }F=\begin{bmatrix} 0 & P\circ i_1\\ \pi_1\circ P& 0 \end{bmatrix} \in \maL_{C^*_G(Y, \maE_Y)} (\maE).
$$
The operator $\pi_1\in  \maL_{C^*_G(Y,\maE_Y)}(C^*_G(Y,\maE_Y)\oplus C^*_G(Y,\maE_Y),C^*_G(Y,\maE_Y))$ is the projection onto the first factor and the operator $i_1: C^*_G(Y,\maE_Y)\rightarrow C^*_G(Y,\maE_Y)\oplus C^*_G(Y,\maE_Y)$ is the inclusion; we note that $\pi_1^*=i_1$. It is clear from the definition that $F$ is self-adjoint.

Let $U= \begin{bmatrix} u & w\\ v & u^* \end{bmatrix}$, since $\pi(U)=diag(\pi(u),\pi(u^*))$, we get $v,w\in C^*_G(Y,\maE_Y)$.
%
%
Then
$$
F^2-I= \begin{bmatrix} P\circ i_1\circ \pi_1\circ P-P & 0\\ 0 & \pi_1\circ P\circ i_1-I \end{bmatrix}
$$

Since $\pi_1\circ P\circ i_1=uu^*$, by the hypothesis on $u$, we get $\pi_1\circ P\circ i_1-I\in C^*_G(Y,\maE_Y)$. On the other hand, the term $P\circ i_1\circ \pi_1\circ P-I $ is given by
$$
P\circ i_1\circ \pi_1\circ P-P= \begin{bmatrix} uu^*(uu^*-I) & (uu^*-I)uv^*\\ vu^*(uu^*-I) & v(u^*u - I)v^* \end{bmatrix}
$$
Since by hypothesis, the elements $uu^*-I$ and $u^*u-I$ belong to $C^*_G(Y,\maE_Y)$, it is clear that all the entries in the matrix above belong to $C^*_G(Y,\maE_Y)$.

So we have proved that $F^2-I\in C^*_G(Y,\maE_Y)$ and hence is a compact operator on the Hilbert module $\maE$.
Now consider the operator $U^*: C^*_G(Y,\maE_Y)\oplus C^*_G(Y,\maE_Y)\rightarrow C^*_G(Y,\maE_Y)\oplus C^*_G(Y,\maE_Y)$, then the map $\Psi$  given by
$$
\Psi=\begin{bmatrix} \pi_1\circ U^* & 0\\ 0 & I \end{bmatrix}: \maE^+\oplus \maE^-\rightarrow C^*_G(Y,\maE_Y)\oplus C^*_G(Y,\maE_Y)
$$
 is a unitary isomorphism $\Psi: \maE^+\oplus \maE^-\longrightarrow C^*_G(Y,\maE_Y)\oplus C^*_G(Y,\maE_Y)$. This is an easy  consequence of the relation $U^*P=P_0U^*$ and the verification is omitted.
Therefore, the Kasparov cycle  $(\maE^+\oplus \maE^-, F)$ is unitarily equivalent to $(C^*_G(Y,\maE_Y)\oplus C^*_G(Y,\maE_Y), \Psi\circ F\circ \Psi^{-1})$. Computing the operator $\hat{F}:= \Psi\circ F\circ \Psi^{-1}$, we get that it is given by
$$
\hat{F}= \begin{bmatrix}  0 & \pi_1 U^* P i_1\\ \pi_1PUi_1 & 0 \end{bmatrix} =\begin{bmatrix}  0 & u^*\\ u & 0 \end{bmatrix}
$$
Taking now the Kasparov product with the Morita cycle $M= [L^2_G(Y), 0]\in KK_0(C^*_G(Y,\maE_Y), C^*_rG)$ that we have already described, with the representation given by the inclusion of $C^*_G(Y,\maE_Y)$ in $\maL_{C^*_{\redg}(G)}(L^2_G(Y))$ through the Morita isomorphism $C^*_G(Y,\maE_Y)\cong\maK_{C^*_{\redg}(G)}(L^2_G(Y))$, we obtain the following class in $KK_0(\C,C^*_{\redg}(G))$:
$$
\left(L^2_GY\oplus L^2_GY, \lambda_\C, \begin{bmatrix}  0 & \mathfrak{U}^*\\ \mathfrak{U} & 0 \end{bmatrix}\right),
$$
where $\mathfrak{U}=u\otimes_{C^*_G(Y, \maE_Y)} \id_{L^2_G(Y)}$. But the representation of $D^*_G(Y, \maE_Y)$ in $L^2_G(Y)$ is precisely given by the isomorphism $\Phi_*^{-1}$.
\end{proof}

\begin{proposition}
The image of the class of $u$ under the composite map $\mu_{G}^0\circ \maP_1$  is represented by the Kasparov cycle
$$
\left(L^2_GY\oplus L^2_GY, \lambda_\C, \begin{bmatrix}  0 & \mathfrak{U}^*\\ \mathfrak{U} & 0 \end{bmatrix}\right).
$$
\end{proposition}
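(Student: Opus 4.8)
The plan is to compute $\mu^0_{BC}\circ\maP_1$ on $[u]$ by running the Kasparov descent map followed by the Mishchenko contraction, in exact analogy with the even case, and to match the outcome with the cycle of Proposition \ref{OddCase}. Recall that $\maP_1([u])$ is the class of the $\Z_2$-graded $G$-equivariant cycle $(\pi_Y\oplus\pi_Y,\ \maE_Y\oplus\maE_Y,\ \begin{bmatrix} 0 & u^*\\ u & 0\end{bmatrix})$ in $KK^0_G(Y,X)$.

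First I would apply the descent $j_G$. Since descent is additive and sends the unitary implementing the $G$-action on a direct sum to the direct sum of the two unitaries, $j_G(\maP_1([u]))$ is represented over $(C_0(Y)\rtimes_{\redg}G,\ C^*_{\redg}G)$ by $((\pi_Y\rtimes\lambda)^{\oplus 2},\ (\maE_Y\rtimes G)^{\oplus 2},\ \begin{bmatrix} 0 & \bar u^*\\ \bar u & 0\end{bmatrix})$, where $\bar u$ is the operator on $\maE_Y\rtimes G$ induced by $u\otimes\id_{C^*_{\redg}G}$ through the identification (\ref{iso1}) --- precisely the operator appearing in Lemma \ref{compactperturb}. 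Next I would take the Kasparov product with the Mishchenko class $p_{KM}=[e]\in KK(\C,C_0(Y)\rtimes_{\redg}G)$. Because $e$ is a projection and $[\bar u,\pi_{Y\rtimes_r G}(e)]\in\maK_{C^*_{\redg}G}(\maE_Y\rtimes G)$ (inherited from $[u,\pi_Y(f)]\in\maK_{C_0(X)}(\maE_Y)$ in the definition of $D^*_G(Y,\maE_Y)$), this product is represented by compressing module and operator by $q:=\pi_{Y\rtimes_r G}(e)$, yielding the cycle $(\lambda_\C,\ q(\maE_Y\rtimes G)\oplus q(\maE_Y\rtimes G),\ \begin{bmatrix} 0 & q\bar u^*q\\ q\bar uq & 0\end{bmatrix})$ in $KK_0(\C,C^*_{\redg}G)$. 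By Proposition \ref{isometry}, the isometry $I\colon L^2_G(Y)\to\maE_Y\rtimes G$ has range exactly $q(\maE_Y\rtimes G)$, so conjugation by $I$ identifies this compressed cycle with $(L^2_G(Y)\oplus L^2_G(Y),\ \lambda_\C,\ \begin{bmatrix} 0 & I^*\bar u^*I\\ I^*\bar uI & 0\end{bmatrix})$.

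Finally I would invoke Lemma \ref{compactperturb}: although stated for $F=2P-I$, its proof uses only that the operator lies in $D^*_G(Y,\maE_Y)$, so the same argument gives $I^*\bar uI-\mathfrak{U}\in\maK_{C^*_{\redg}(G)}(L^2_G(Y))$, and after taking adjoints also $I^*\bar u^*I-\mathfrak{U}^*\in\maK_{C^*_{\redg}(G)}(L^2_G(Y))$, where $\mathfrak{U}=\Phi_*^{-1}(u)$; the identification of $\mathfrak{U}$ with $u\otimes_{C^*_G(Y,\maE_Y)}\id_{L^2_G(Y)}$ is the verbatim argument of Proposition \ref{maM}. Since compact perturbations do not change a Kasparov class, the cycle above is equivalent to $(L^2_G(Y)\oplus L^2_G(Y),\ \lambda_\C,\ \begin{bmatrix} 0 & \mathfrak{U}^*\\ \mathfrak{U} & 0\end{bmatrix})$, which is exactly the cycle of Proposition \ref{OddCase}. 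Comparing the two computations proves the commutativity of the odd diagram and completes the proof of Theorem \ref{compatibilityBC}.

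The step I expect to be the main obstacle is the second one --- making precise that the Kasparov product with the Mishchenko projection is literally the compression by $q=\pi_{Y\rtimes_r G}(e)$. This requires checking the connection condition and that the compressed operator still satisfies the Kasparov axioms, using that $\bar u\bar u^*-I$ and $\bar u^*\bar u-I$ become $\maK$-valued after descent and compression and that $[\bar u,q]$ is compact; everything else is bookkeeping parallel to the even case already treated.
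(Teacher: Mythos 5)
Your proposal is correct and follows essentially the same route as the paper: transport the even $G$-equivariant cycle $(\pi_Y\oplus\pi_Y,\maE_Y\oplus\maE_Y, \begin{bmatrix} 0 & u^*\\ u & 0\end{bmatrix})$ through descent and the Mishchenko compression exactly as in the even case, identify the compressed module with $L^2_G(Y)\oplus L^2_G(Y)$ via the isometry $I$ of Proposition \ref{isometry}, and conclude by the compact-perturbation argument of Lemma \ref{compactperturb}, whose proof indeed only uses membership in $D^*_G(Y,\maE_Y)$ and hence applies to $u$. Your explicit remarks on the compression step and on extending Lemma \ref{compactperturb} to $u$ simply fill in details the paper leaves implicit.
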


\begin{proof}
{In order} to compute the image of the class of $u$ under the composite map $\mu_{G}^0\circ \maP_1$, we apply the same construction as for the even case. Notice first that the image under the Paschke map $\maP_1$ of $u$ is easily described by the even $G$-equivariant Kasparov cycle
$$
\left(\maE_Y\oplus \maE_Y, \pi_Y\oplus\pi_Y, T=\begin{bmatrix} 0 & u^*\\ u & 0 \end{bmatrix}\right)
$$
and we need to represent the image of this latter cycle under the Baum-Connes map $\mu_G^0$. The computation is similar to the even case and we get that this image is represented by the Kasparov cycle
$$
\left(   \pi_{Y\rtimes G} (e) (\maE_Y\rtimes G) \oplus  \pi_{Y\rtimes G} (e) (\maE_Y\rtimes G),   \; \pi_{Y\rtimes G} (e)\circ {\bar T}\circ \pi_{Y\rtimes G} (e)\right),
$$
where ${\bar T}$ is the adjointable operator on $\maE_Y\rtimes G$ corresponding to $T\otimes_{C_0(X), r} C^*_{\redg} (G)$ under the isomorphism describe previously, and $e$ is the Michschenko idempotent also described previously. Recall that the isometry $I$ identifies $L^2_G(Y)$ with the orthocomplemented Hilbert submodule $ \pi_{Y\rtimes G} (e) (\maE_Y\rtimes G)$ of $\maE_Y\rtimes G$ and it satisfies more precisely that $II^* = \pi_{Y\rtimes G} (e)$. Therefore, the previous Kasparov cycle is unitarily equivalent to the Kasparov cycle
$$
\left( L^2_G(Y) \oplus L^2_G(Y), I^* {\bar T} I \right).
$$
If now $\mathfrak{T}$ is the $C^*_{\redg} (G)$-adjointable operator in $L^2_G(Y)$ corresponding to $T$ under the isomorphism $\Phi_*$, then we need to show that
$$
I^* {\bar T} I - \mathfrak{T} \quad \in \maK_{C^*_{\redg}(G)} (L^2_G(Y)).
$$
This is a corollary of  Lemma \ref{compactperturb}.
\end{proof}

\section{The coarse $G$-index for continuous $G$-families}

We explain in this last section how to define the coarse $G$-index of a $G$-equivariant family of Dirac-type operators on a $G$-proper continuous
family of bounded geometry smooth riemannian manifolds. Our construction relies on some classical results due to Shubin \cite{Shubin} and extends
the work of Paterson to the coarse category. In particular, the construction given is a generalization of the index class for laminations defined by Moore-Schochet
\cite{MooreSchochet} to the setting of (uniform) bounded geometry laminations.

\subsection{Continuous families of manifolds}\label{ContFamilies}

The material in this overview subsection is  taken from \cite{Paterson2} so we shall be brief.


\begin{definition}[$C^{\infty,0}$ maps]
Let $M,N$ be smooth manifolds and let $X$ be a locally compact Hausdorff space. A function $f: M\times X\rightarrow N\times X$ is said to be
of class $C^{\infty,0}$ if $f(M\times \{x\})\subset N\times \{x\}$ for each
$x\in X$, and the map $X\ni x\mapsto f (\bullet, x)\in C^\infty(M,N)$ is continuous.
\end{definition}

In the previous definition, $C^\infty(M,N)$ is given the
usual Fr\'{e}chet topology of uniform convergence on compact subsets together with derivatives of all orders.
Let $Y$ be an second countable locally compact Hausdorff space and $\rho: Y\rightarrow X$ be an open surjective map.

\begin{definition}[Continuous family of smooth manifolds]\label{ContSmooth}
The triple $(Y,\rho, X)$ is called a continuous family of smooth manifolds if $\exists  k\in \N$ and a collection $\{U_\alpha,\phi_\alpha\}_{\alpha\in A}$ where:
\begin{enumerate}
 \item $U_\alpha \subset Y$ is open, and $Y=\bigcup_{\alpha\in A} U_\alpha$,
 \item for each $\alpha \in A$, $\phi_\alpha: U_\alpha\rightarrow \rho(U_\alpha) \times V_\alpha$ is a fiber-preserving homeomorphism with $V_\alpha\subseteq \R^k$ an open subset.
 \item for any $\alpha,\beta\in A$ such that $U_\alpha\cap U_\beta\neq \emptyset$, the maps
 $\phi_{\alpha\beta}:=\phi_\beta\circ \phi_\alpha^{-1}: \phi_\alpha(U_\alpha\cap U_\beta)\rightarrow \phi_\beta(U_\alpha\cap U_\beta)$ is a $C^{\infty,0}$ function from
 $\rho(U_\alpha\cap U_\beta) \times (V_\alpha \cap V_\beta)$ to itself.
\end{enumerate}

The pairs $(U_\alpha,\phi_\alpha), \alpha \in A$ will be called local charts. We shall call $Y$ a $C^{\infty,0}$-manifold.
\end{definition}

In \cite{Paterson2}, the integer $k$ is assumed to be $\geq 1$ but we prefer here to include $k=0$ which corresponds to $\R^k$ being $\{\star\}$.
The notion of $C^{\infty,0}$-maps between continuous families is defined similarly as well as $C^{\infty,0}$-diffeomorphism for instance.
It is also standard to define fibrations of continuous families of manifolds as well as $C^{\infty,0}$-vector bundles or hermitian $C^{\infty,0}$-vector bundles over
continuous families of smooth manifolds, see again \cite{Paterson2}.  If for instance,  $(Y,\rho, X)$ be a continuous family of smooth manifolds,
then the space $TY = \bigcup_{x\in X} TY_x$ (as well as  all the functorially associated bundles) inherits the structure of $C^{\infty,0}$-vector bundle over $Y$.

\begin{lemma}[\cite{Paterson2}]
 Let $U=\{U_\alpha\}_{\alpha\in A}$ be a locally finite open cover of $Y$ consisting of local charts. Then there exists a partition of unity $\{\psi_\alpha\}_{\alpha\in A}$ consisting
 of $C^{\infty,0}$ functions subordinate to $U$.
\end{lemma}

Let $G$ be our \'{e}tale Hausdorff groupoid as before with $G^{(0)}=X$ then $G$ is in fact a continuous family groupoid in the sense of \cite{Paterson2} since we have included the case $k=0$ in Definition \ref{ContSmooth}. Recall the definition of a $G$-space (Definition (\ref{Gspace})).

\begin{definition}[Smooth $G$-spaces]
 Let $Y$ be a $C^{\infty,0}$-manifold which is endowed with a $G$-action, thus making it a $G$-space. The $G$-action is said to be of class $C^{\infty,0}$ if $\rho:Y\rightarrow X$ is a continuous family of smooth manifolds, and the structure map $\lambda: Y{\rtimes}_r G\rightarrow Y$ is of class $C^{\infty,0}$.
 \end{definition}

Notice that the space $Y\rtimes_r G$ carries a natural $C^{\infty,0}$ structure so that the previous definition  makes sense.
The previous definition makes sense for any continuous family groupoid $G$ \cite{Paterson2}, but the general case is not needed in the present paper.

\subsection{The coarse $G$-index}

We  assume from now on that the proper $G$-space $(Y, \rho:Y\to X)$ is a continuous family of smooth riemannian manifolds (also denoted $C^{\infty, 0}$ according to Connes' notation for laminations) such that the action is of class $C^{\infty, 0}$, see  \cite{Paterson2} or the short overview given in \ref{ContFamilies}. We are mainly interested in the coarse index for {\em{complete laminations}} and the Paterson formalism will be convenient for us.

We assume moreover that $Y$ has (uniformly over $X$) bounded geometry in the fiber direction, and we also assume that the $G$-action is fiberwise isometric. In particular,  we assume  that the injectivity radius associated with the induced Riemannian metric on each $Y_x=\rho^{-1} (x)$ is bounded below independently of $x$, so that we have well defined barycentric fiberwise coordinates with  $C^{\infty,0}$-bounded changes over $Y$, and also that the curvature tensor defined on each smooth fiber  of $\rho$ is $C^{\infty, 0}$-bounded over $Y$. In particular, the smooth manifolds $Y_x$ are all complete riemannian manifolds and we use the complete riemannian $G$-invariant distance associated with the $C^{\infty, 0}$ $G$-invariant riemannian metric
to see $Y$ as a $G$-family of proper metric spaces.  Finally, notice that there exist  $C^{\infty, 0}$-bounded partitions of unity which are subordinate to  covers by geodesic balls, see  \cite{Shubin, Paterson2}. All the geometric structure that we shall use in this section are assumed to have uniformly  bounded geometry in an obvious sense which extends the classical definitions of \cite{Shubin} to the setting of continuous families. We point out that for $m\geq 1$, any $C^{\infty, m}$-submersion $\rho$ satisfies the assumptions of a $C^{\infty, 0}$-family of smooth manifolds and Definition \ref{ContSmooth} extends the notion of a submersion to encompass the setting where the topological space $X$ is not even $1$-differentiable.

\begin{definition}[Bounded Propagation Speed]
A family $\D=(D_x)_{x\in X}$ of symmetric, first-order  elliptic differential operators on $\{L^2(Y_x, E_x)\}_{x\in X}$ such that the quantity
$$
C(\D) :=  \sup\{ ||\sigma(y,\xi)||_{\End(E_x)} : y \in Y, \xi \in S^*_yY_{\rho (y)} \}
$$
is finite, is said to be of (uniformly) bounded propagation speed.
\end{definition}

By elliptic we of course mean here fully elliptic in the sense of Shubin, see \cite{Shubin}. Using the results of \cite{Shubin} and \cite{Paterson2}, it is a routine argument to show that, under our assumptions, families of Dirac-type operators associated with  (uniformly) $C^{\infty, 0}$-bounded structures, do have (uniformly) bounded propagation speed. Observe that a family $\D$ of symmetric, first-order elliptic differential operators induces a family of self-adjoint, closed unbounded operators which we also denote by $\D$, cf. \cite{HRbook}, Chapter 10. Suppose that this induced family $\D$ is continuous (see Definition \ref{Continuousclosed}) and has bounded propagation speed. We denote the induced regular operator on the associated Hilbert module $\maE_{Y, E}$ by $\maD$. By the functional calculus of regular operators, we know that for any continuous complex valued bounded function $f$ on $\R$, the operator $f(\maD)$ is a well defined adjoiontable operator on the Hilbert module $\maE_{Y, E}$ associated with the continuous field of Hilbert spaces $(L^2(Y_x, E_x))_{x\in X}$. In particular, the wave operator $\exp(is\maD)$ can be defined in this way as  an adjointable operator on $\maE_{Y, E}$.

\begin{lemma}[Finite propagation property of the wave operator]
Let $f, g \in C_0(Y)$ such that $\supp(f)\cap \supp(g)=\emptyset$ and either $f$ or $g$ has compact support. Then there exists $\epsilon>0$ such that $\pi_Y(f)\exp(is\maD)\pi_Y(g)=0$ for $|s|<\epsilon$.
\end{lemma}

\begin{proof}
By the compatibility of functional calculi (\ref{compatibility}), the adjointable operator $\exp(is\maD)$ corresponds to the continuous family of bounded operators $\{\exp(isD_x)\}_{x\in X}$. Then, by \cite{HRbook}, Corollary 10.3.3 for finite propagation of wave operators on complete manifolds, we have
$$
\pi_x(f|_{Y_x})\exp(isD_x)\pi_x(g|_{Y_x})=0 \quad \text{ for all $s$ such that} |s|<1/c_x
$$
where $c_x$ is the propagation speed of $D_x$ and $\pi_x$ is the representation of $C_0(Y_x)$ on $L^2(Y_x,E_x)$ by pointwise multiplication.

Therefore for $|s|<1/C(\D)$, the continuous family of operators $\pi_x(f|_{Y_x})\exp(isD_x)\pi_x(g|_{Y_x})$ is identically zero. Since $\pi_Y(f)\exp(is\maD)\pi_Y(g)$ corresponds to this family, the proof is concluded.
\end{proof}


\begin{lemma}[Inverse Fourier representation]\label{invFourier}
Let $\phi\in \maS(\R)$ be a Schwartz function such that it has compactly supported Fourier transform. Then, for any $u, v\in C^{\infty,0}_c(Y)$ the function $s \mapsto <\exp(is\maD)u,v>$ is continuous, and  the following relation holds:
$$
<\phi(\maD)u,v>= \int_{\R} \hat{\phi}(s)<\exp(is\maD)u,v> ds
$$
\end{lemma}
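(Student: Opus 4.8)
The statement is an operator-module version of the standard fact that, for a self-adjoint operator $\maD$, one has $\phi(\maD) = \frac{1}{2\pi}\int_{\R}\hat{\phi}(s)\exp(is\maD)\,ds$ when $\phi$ and its Fourier transform are nice (here the normalization constant is absorbed into the definition of $\hat{\phi}$, matching the convention implicitly used in the preceding wave-operator lemma). Since everything is happening in the Hilbert module $\maE_{Y,E}$, the proof will reduce the module statement to the already-known Hilbert-space statement fibrewise, and then reassemble. First I would recall from the compatibility of functional calculi (the result labelled \ref{compatibility}) that $\phi(\maD)$ corresponds to the continuous family $\{\phi(D_x)\}_{x\in X}$ and $\exp(is\maD)$ corresponds to $\{\exp(isD_x)\}_{x\in X}$. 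Fixing $u,v\in C^{\infty,0}_c(Y)$, the $C_0(X)$-valued inner product $\langle\phi(\maD)u,v\rangle$ is then the function $x\mapsto \langle\phi(D_x)u_x,v_x\rangle_{L^2(Y_x,E_x)}$, and similarly $\langle\exp(is\maD)u,v\rangle$ is $x\mapsto\langle\exp(isD_x)u_x,v_x\rangle$.

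Next I would establish the pointwise identity. For each fixed $x$, $D_x$ is an essentially self-adjoint elliptic operator on the complete manifold $Y_x$ (bounded geometry in the fibre direction), so by the classical inverse-Fourier / wave-equation argument on complete manifolds — exactly the circle of ideas behind \cite{HRbook}, Chapter 10, and Higson–Roe's treatment of finite propagation — one has
$$
\langle\phi(D_x)u_x,v_x\rangle = \int_{\R}\hat{\phi}(s)\,\langle\exp(isD_x)u_x,v_x\rangle\,ds .
$$
The integral is over a compact set (the support of $\hat{\phi}$), the integrand is continuous and bounded in $s$, so this is a genuine Riemann/Bochner integral and there is no convergence subtlety once $\hat\phi$ is compactly supported. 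Continuity of $s\mapsto\langle\exp(is\maD)u,v\rangle$ follows from strong continuity of the unitary group $\exp(is\maD)$ on $\maE_{Y,E}$ (functional calculus of regular operators), which in turn gives joint continuity in $(s,x)$ once one notes $u_x,v_x$ vary continuously and have fibrewise compact support controlled by $\supp(u),\supp(v)$ together with the properness assumption.

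**Reassembling.** Now I would promote the fibrewise equality to an equality of functions on $X$, hence of elements of $C_0(X)$: the left-hand side of the asserted identity is the function $x\mapsto\langle\phi(D_x)u_x,v_x\rangle$, and the right-hand side $\int_{\R}\hat{\phi}(s)\langle\exp(is\maD)u,v\rangle\,ds$ is an integral of the $C_0(X)$-valued continuous compactly-$s$-supported function $s\mapsto\langle\exp(is\maD)u,v\rangle$, which one evaluates at each $x$ by interchanging evaluation-at-$x$ (a bounded $*$-homomorphism $C_0(X)\to\C$, hence commuting with Bochner integrals) with the integral; this yields exactly $\int_{\R}\hat{\phi}(s)\langle\exp(isD_x)u_x,v_x\rangle\,ds$, which equals $\langle\phi(D_x)u_x,v_x\rangle$ by the pointwise identity. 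Since this holds for all $x$, the two elements of $C_0(X)$ agree, which is the claim.

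**Main obstacle.** The only genuinely nontrivial input is the pointwise inverse-Fourier formula on each complete fibre $Y_x$ — i.e. $\phi(D_x) = \int_{\R}\hat{\phi}(s)\exp(isD_x)\,ds$ for Schwartz $\phi$ with compactly supported Fourier transform — and I would not reprove it but cite \cite{HRbook}, Chapter 10 (it is the standard consequence of Stone's theorem plus the spectral theorem, valid for any self-adjoint operator, with the finite-propagation interpretation of $\exp(isD_x)$ coming from the preceding lemma). The part requiring care in writing, rather than a real difficulty, is the interchange of the Bochner integral with the point-evaluation functional on $C_0(X)$ and the verification that $s\mapsto\langle\exp(is\maD)u,v\rangle$ is a continuous $C_0(X)$-valued function of compact support in $s$ — both are routine given strong continuity of the wave group on $\maE_{Y,E}$ and the finite-propagation lemma, which bounds the $s$-support once $u,v$ are fixed, but one must invoke them explicitly. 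There is also a small normalization bookkeeping point (the $2\pi$) which I would handle simply by declaring $\hat{\phi}$ to be normalized consistently with the convention already in force for the wave operators in the previous lemma.
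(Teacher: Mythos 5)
Your proposal is correct and matches the paper's intent: the paper's own proof simply says the lemma "can be deduced from the spectral theorem" and refers to the argument of Hanke--Pape--Schick (Lemma 3.6), which is exactly the classical inverse-Fourier identity you invoke fibrewise, promoted to the Hilbert module $\maE_{Y,E}$ via the compatibility of functional calculi (Lemma \ref{compatibility}) and the strong continuity of the wave group. Your write-up is just a more explicit version of that same route (fibrewise spectral-theorem identity plus interchange of the Bochner integral with evaluation at $x$), so there is nothing to correct.
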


\begin{proof}
This is a classical result which can be deduced from the spectral theorem. The reader can also consult the arguments in \cite{HankePapeSchick}[Lemma 3.6] which immediately carries over to our situation.
\end{proof}

\begin{definition}[Normalizing function]
 A function $\chi \in C(\R)$ is called normalizing if $\chi$ is an odd function such that $\chi(x)>0$ for $x>0$ and $\chi(x)\xrightarrow{x\rightarrow \infty} 1$.
\end{definition}

\begin{remark}\label{invFourier1}
The space of Fourier transforms of smooth compactly supported functions on $\R$, a subspace of the Schwartz space $\maS(\R)$ is dense in $C_0(\R)$. It is easy to see that the inverse Fourier representation of Lemma (\ref{invFourier}) also allows to define the operators $f(\maD)$ for any  $f\in C_0(\R)$. Moreover, using the extension of the Fourier transform to distributions, one can also define $\chi (\maD)$ for any normalizing function $\chi$ by using again the inverse Fourier representation of Lemma \ref{invFourier}.
\end{remark}

\begin{theorem}[Roe's Lemma]\label{Roe}
Let $\D$ be a $G$-equivariant family of first-order elliptic differential operators with (uniformly) bounded propagation speed, inducing a continuous field of closed, self-adjoint operators, also denoted $\D$. Denote the regular operator on $\maE_{Y,E}$ induced by $\D$ as $\maD$. Then for any normalizing function $\chi$, the operator $\chi(\maD)$ belongs to the Roe algebra $D^*_G(Y;\maE_{Y,E})$. \\Moreover, for  any $\phi \in C_0(\R)$, the operator $\phi(\maD)$ belongs to the ideal  $C^*_G(Y;\maE_{Y,E})$. In particular, the class in the quotient $C^*$-algebra $C^*_G(Y;\maE_{Y,E})$ defined by the operaror $\chi (\maD)$ is independent of the choice of $\chi$.
\end{theorem}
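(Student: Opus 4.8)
The plan is to establish the three assertions in the order: first $\phi(\maD)\in C^*_G(Y;\maE_{Y,E})$ for $\phi\in C_0(\R)$, then $\chi(\maD)\in D^*_G(Y;\maE_{Y,E})$ for a normalizing function $\chi$, and finally the independence of the class in the quotient, which will be immediate from the first statement. Two facts are used throughout. (a) Since $\D$ is $G$-equivariant, the intertwining $V_gD_{s(g)}V_g^{-1}=D_{r(g)}$ and uniqueness of the functional calculus give $f(\maD)\in[\maL_{C_0(X)}(\maE_{Y,E})]^G$ for every bounded Borel $f$, and fibrewise $f(\maD)$ is the family $(f(D_x))_{x}$. (b) By Lemma \ref{invFourier} together with the finite propagation speed $\leq C(\D)$ of the wave operator established above, every $\psi\in\maS(\R)$ with $\widehat\psi$ supported in $[-R_0,R_0]$ has $\psi(\maD)$ of propagation $\leq R_0\,C(\D)$: pairing $\pi_Y(f)\psi(\maD)\pi_Y(g)$ against sections from the dense submodule $C^{\infty,0}_c(Y)$ and inserting the inverse Fourier representation, the integrand vanishes whenever $|s|\leq R_0$ and $d_Y(\supp f,\supp g)>R_0\,C(\D)$, since then $|s|\,C(\D)<d_Y(\supp f,\supp g)$.

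For $\phi\in C_0(\R)$: by Remark \ref{invFourier1} the functions $\psi\in\maS(\R)$ with $\widehat\psi$ compactly supported are norm-dense in $C_0(\R)$, the functional calculus is norm-contractive, and $C^*_G(Y;\maE_{Y,E})$ is norm-closed, so it suffices to treat such $\psi$. Then $\psi(\maD)$ is $G$-invariant by (a) and has finite propagation by (b), and it remains to see that $\psi(\maD)\pi_Y(f)\in\maK_{C_0(X)}(\maE_{Y,E})$ for every $f\in C_0(Y)$. I would deduce this by combining the finite propagation of $\psi(\maD)$ with the exponential off-diagonal decay of $(\maD+i)^{-1}$ (itself an exponentially weighted integral of wave operators) to localise the product to a region that is compact over $X$, where compactness follows fibrewise from elliptic regularity and the Rellich--Kondrachov theorem, the bounds being uniform in $x$ by the uniform bounded-geometry hypotheses, cf. \cite{Shubin, Paterson2}. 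Hence $\psi(\maD)\in C^*_G(Y;\maE_{Y,E})$, and therefore $\phi(\maD)\in C^*_G(Y;\maE_{Y,E})$ for all $\phi\in C_0(\R)$; in particular $[\phi(\maD),\pi_Y(f)]\in\maK_{C_0(X)}(\maE_{Y,E})$.

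For a normalizing function $\chi$: fix once and for all a smooth normalizing $\chi_0$ with $\chi_0'\in\maS(\R)$ (e.g. a normalized error function). Any two normalizing functions differ by an element of $C_0(\R)$, so $(\chi-\chi_0)(\maD)\in C^*_G(Y;\maE_{Y,E})\subseteq D^*_G(Y;\maE_{Y,E})$ by the previous paragraph, and it is enough to show $\chi_0(\maD)\in D^*_G(Y;\maE_{Y,E})$. Choose $\kappa_R\in C_c^\infty(\R)$ even with $\kappa_R\equiv1$ on $[-R,R]$, and set $\chi_{0,R}:=(\kappa_R\,\widehat{\chi_0})^\vee$, the inverse Fourier transform of the compactly supported tempered distribution $\kappa_R\,\widehat{\chi_0}$; since $\widehat{\chi_0}$ consists of a multiple of $\delta_0$ plus a principal-value term built from the Schwartz function $\widehat{\chi_0'}$, the function $\chi_{0,R}$ is bounded and smooth and $\|\chi_0-\chi_{0,R}\|_\infty\to0$ as $R\to\infty$ (the difference is the inverse Fourier transform of $(1-\kappa_R)\widehat{\chi_0'}/(i\xi)$, an $L^1$ function of vanishing norm). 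Each $\chi_{0,R}(\maD)$ is $G$-invariant, has propagation $\leq R\,C(\D)$ by (b) (now pairing against a compactly supported distribution), and satisfies $[\chi_{0,R}(\maD),\pi_Y(f)]\in\maK_{C_0(X)}(\maE_{Y,E})$: indeed this equals $[\chi_0(\maD),\pi_Y(f)]-[(\chi_0-\chi_{0,R})(\maD),\pi_Y(f)]$, the first commutator being $C_0(X)$-compact by the standard resolvent computation made uniform over $X$ exactly as above (cf. \cite{Roe, HRbook}), the second by the previous paragraph since $\chi_0-\chi_{0,R}\in C_0(\R)$. Thus each $\chi_{0,R}(\maD)$ lies in the set whose norm closure defines $D^*_G(Y;\maE_{Y,E})$, and $\|\chi_0(\maD)-\chi_{0,R}(\maD)\|\leq\|\chi_0-\chi_{0,R}\|_\infty\to0$ gives $\chi_0(\maD)\in D^*_G(Y;\maE_{Y,E})$, hence $\chi(\maD)\in D^*_G(Y;\maE_{Y,E})$.

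Independence is then immediate: if $\chi_1,\chi_2$ are normalizing, $\chi_1-\chi_2\in C_0(\R)$, so $\chi_1(\maD)-\chi_2(\maD)=(\chi_1-\chi_2)(\maD)\in C^*_G(Y;\maE_{Y,E})$ and $\chi_1(\maD),\chi_2(\maD)$ have the same image in $Q^*_G(Y;\maE_{Y,E})$. The only step that is not formal is the local compactness of $\psi(\maD)\pi_Y(f)$ and the accompanying resolvent commutator estimate: the genuine content is that the fibrewise Rellich/elliptic-regularity compactness is \emph{uniform over the parameter space} $X$, which is exactly where the uniform bounded-geometry assumptions on $(Y,\rho)$ and on the coefficient bundle are needed; granting this, the remainder is the usual finite-propagation-plus-Fourier bookkeeping, adapting Roe's argument \cite{Roe} to Hilbert modules over $C^*_{\redg}(G)$.
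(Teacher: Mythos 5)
Your overall strategy --- density of functions with compactly supported Fourier transform, finite propagation of the wave operator via Lemma \ref{invFourier}, Fourier truncation of a normalizing function, and reduction of the general $\chi$ to the $C_0(\R)$ case --- is essentially the paper's, and those parts are fine. The genuine gap is at the one step you yourself flag as non-formal: the claim that $\psi(\maD)\pi_Y(f)\in\maK_{C_0(X)}(\maE_{Y,E})$ because the fibrewise operators $\psi(D_x)\pi_Y(f|_{Y_x})$ are compact (elliptic regularity plus Rellich) \emph{with bounds uniform in $x$}. Uniform boundedness of a field of fibrewise compact operators is not what makes an adjointable operator compact on the Hilbert module associated with a continuous field of Hilbert spaces: what is needed is that the field $x\mapsto \psi(D_x)\pi_Y(f|_{Y_x})$ be \emph{norm-continuous} in $x$ (together with vanishing of its norm off the compact set $\rho(\supp f)$, which is automatic here). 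A strong-$*$ continuous but norm-discontinuous field of uniformly bounded finite-rank projections is adjointable and fibrewise compact, yet not module-compact, so your justification does not suffice. The paper's proof confronts exactly this point: after the convolution trick and the finite-propagation localization to a trivializing chart $U_\alpha$, it reduces everything to norm-continuity in $x$ of the family $(\psi(D_x)\pi_Y(f|_{Y_x}))_{x}$, which is then supplied by Paterson's Proposition 11, resting on Shubin's uniform, parameter-dependent estimates. You cite Shubin and Paterson, but for uniformity of bounds rather than for continuity in the $X$-variable, so the key analytic input is missing rather than merely delegated.

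The same gap recurs in your treatment of the normalizing function: to show each truncated $\chi_{0,R}(\maD)$ is pseudolocal you invoke ``the standard resolvent computation made uniform over $X$'' to get $[\chi_0(\maD),\pi_Y(f)]\in\maK_{C_0(X)}(\maE_{Y,E})$, which is an assertion of exactly the same nature and again requires norm-continuity in $x$, not just uniform bounds. This detour is also unnecessary: as in the paper, once $\chi-\chi'$ (or $\chi_0-\chi_{0,R}$) is absorbed by the $C_0(\R)$ part, pseudolocality of the finite-propagation piece follows from Kasparov's Lemma (Lemma \ref{KasparovLemma}), since for $f,g$ with disjoint supports and one of them compactly supported one gets $\pi_Y(f)\chi'(\maD)\pi_Y(g)=0$ outright from finite propagation, so no analytic estimate is needed at that step. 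A minor point: the exponential off-diagonal decay of $(\maD+i)^{-1}$ you invoke is superfluous --- since $\widehat{\psi}$ is compactly supported, $\psi(\maD)$ already has finite propagation, and with $\supp f$ compact the localization to a region compact over $X$ is immediate from properness of $d_Y$.
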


\begin{proof}\
It is clear that functional calculus preserves $G$-equivariance, so $\chi(\maD)\in \maL_{C_0(X)}(\maE_{Y,E})^G$. Let $\chi'$ be a normalizing function such that $\supp(\hat{\chi'})$ is included in $[-\epsilon, \epsilon]$ for $\epsilon>0$ small enough. Then, $\chi-\chi' \in C_0(\R)$ so that corresponding operator $\chi(\maD)-\chi'(\maD)$ is locally compact according to the {{second}} item. Therefore it suffices to show that $\chi'(\maD)\in D^*_G(Y;\maE_{Y,E})$. To show that $[\chi'(\maD), \pi_Y(f)]\in \maK_{C_0(X)}(\maE_Y)$ for all $f\in C_0(Y)$, from Lemma \ref{KasparovLemma} it suffices to show that $\pi_Y(f)\chi'(\maD)\pi_Y(g)$ is compact whenever $f,g \in C_0(Y)$ have disjoint supports and at least one of them have compact support. However, to this end we apply the finite propagation property of the wave operator in conjunction with the inverse Fourier representation of $\chi'(\maD)$ (cf. Lemma \ref{invFourier1}) as
$$
\chi'(\maD) = \int_{-\epsilon}^\epsilon \what{\chi'}(s)\exp(is \maD) ds
$$

Since $\pi_Y(f)\exp(is\maD)\pi_Y(g)=0$ for $|s|<\epsilon$ for small enough $\epsilon$, the operator $\pi_Y(f)\chi'(\maD)\pi_Y(g)$ is in fact zero (thus compact). Lastly, that $\chi(\maD)$ is a norm-limit of finite propagation operators can be shown following the techniques in \cite{HankePapeSchick}, Lemma 3.6.


For the second part of the theorem, it suffices to give the proof for $\phi$ in the subspace $J\subset \maS(\R)$ of Fourier transforms of smooth compactly supported functions. Now, for any $\psi \in J$, the operator $\psi(\maD)$
has finite propagation because of the integral representation of Lemma \ref{invFourier}. By density of $C^{\infty, 0}_c(Y)$ in $C_0(Y)$, it moreover suffices to show that  for any $f\in C^{\infty, 0}_c(Y)$ and any $\psi\in J$, $\psi (\maD)\pi_Y(f) \in \maK_{C_0(X)}(\maE_{Y, E})$.
From Proposition 10.5.2 of \cite{HRbook}, we know that $\psi (D_x)\pi_Y(f_{|_{Y_x}})$ is compact for each $x\in \rho(\Supp (f))$ and only the regularity with respect to the $X$-variable needs to be investigated. Now, a classical argument using convolution reduces the  proof to the case where $\hat\psi$ is supported within a small enough neighborhood of $0$. By the uniform speed propagation, we can thus assume that the finite propagation operator $\psi (\maD)$ is actually supported within a uniform in $X$ small enough neighborhood of the diagonals of the fibers. Again, assuming that $f$ is also supported within a small enough subset of $Y$, we are reduced to the case of an open trivializing set $U_\alpha$ as in Definition \ref{ContSmooth}, with a fiber-preserving homeomorphism
$$
\phi_\alpha: U_\alpha\rightarrow \rho (U_\alpha)\times V_\alpha\text{ with } V_\alpha\text{ an open set in }{{ \R^k}}.
$$
Now, the proof is reduced to this local situation and we need to show that the  family $(\psi (D_x)\pi_Y(f_{|_{Y_x}}))_{x\in X}$ is then continuous in $x$ for the operator norm.  {{But this latter is justified  in \cite{Paterson2}[Proposition 11] using classical results due to M. Shubin \cite{Shubin}[Section 6].}} We have now shown that the operator $\psi (\maD)\pi_Y(f)$ is a compact operator on the Hilbert $C_0(X)$-module, i.e. $\psi (\maD)\pi_Y(f) \in \maK_{C_0(X)}(\maE_{Y, E})$, and hence that $\psi (\maD)\in C^*_G(Y;\maE_{Y,E})$ for any $\psi\in J$ and finally also for any $\psi\in C_0(\R)$ by density.

We end this proof by pointing out that if $\chi_1$ and $\chi_2$ are two normalizing functions then the difference $\chi_1-\chi_2$ belongs to $C_0(\R)$ and hence $\chi_1(\maD) - \chi_2 (\maD)\in C^*_G(Y;\maE_{Y,E})$. Therefore, the two operators $\chi_1(\maD)$ and  $\chi_2 (\maD)$ yield the same class in the quotient Roe algebra $Q^*_G(Y;\maE_{Y,E})$.
\end{proof}

We recall now the so-called Kasparov's lemma that was used in the above proof. Let $\pi:C_0(Y)\rightarrow \maL_{C_0(X)}(\maE)$ be a non-generate representation in the Hilbert $C_0(X)$-module $\maE$. Recall that $T \in \maL_{C_0(X)}(\maE)$  is pseudolocal if $[T,\pi_Y(f)] \in \maK_{C_0(X)}(\maE)$ for all $f\in C_0(Y)$. The following is a straightforward generalization of Kasparov's Lemma characterizing pseudolocal operators. The proof is a straightforward adaptation of \cite{HRbook}[Lemma 5.4.7] and is omitted.

\begin{lemma}[Kasparov's Lemma]\label{KasparovLemma}
An operator $T\in \maL_{C_0(X)}(\maE)$ is pseudolocal if and only if for all $f,g \in C_0(Y)$ such that $\supp(f) \cap \supp(g) = \emptyset$ and at least one of the functions $f$ and $g$ have compact support, the operator $\pi (f)T\pi (g) \in \maK_{C_0(X)}(\maE)$.
\end{lemma}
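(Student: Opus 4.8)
The statement is the Hilbert‑module incarnation of Kasparov's characterisation of pseudolocal operators, and the plan is to transcribe the proof of \cite{HRbook}, Lemma 5.4.7, the only change being that Hilbert–space operators and compacts are replaced throughout by $\maL_{C_0(X)}(\maE)$ and the ideal $\maK_{C_0(X)}(\maE)$; one keeps in mind that, since $\pi$ is non‑degenerate, it extends to a $*$-homomorphism $\bar\pi\colon C_b(Y)\to\maL_{C_0(X)}(\maE)$ with $\bar\pi(1)=\id$.

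The forward implication is immediate. If $T$ is pseudolocal and $f,g\in C_0(Y)$ have disjoint supports with, say, $\supp(g)$ compact, choose (using that the Tietze theorem is available for our spaces) $h\in C_c(Y)$ with $h\equiv 1$ on $\supp(g)$ and $\supp(h)\cap\supp(f)=\emptyset$; then
\[
\pi(f)T\pi(g)=\pi(f)T\pi(h)\pi(g)=\pi(fh)T\pi(g)+\pi(f)[T,\pi(h)]\pi(g)=\pi(f)[T,\pi(h)]\pi(g),
\]
which lies in $\maK_{C_0(X)}(\maE)$ because $fh=0$ and $[T,\pi(h)]$ is $C_0(X)$-compact. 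The case in which $f$ is the compactly supported function follows by passing to adjoints, as $T^*$ satisfies the same hypothesis.

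For the converse one must show $[T,\pi(f)]\in\maK_{C_0(X)}(\maE)$ for all $f\in C_0(Y)$. Since $f\mapsto[T,\pi(f)]$ is norm continuous, $\maK_{C_0(X)}(\maE)$ is closed, and $C_c(Y)$ is dense in $C_0(Y)$, it suffices to take $f\in C_c(Y)$, and (splitting into real and imaginary parts) $f$ real. Fix $\epsilon>0$ and a partition of unity $(\chi_k)$ of $\R$ subordinate to a cover by intervals of length $<\epsilon$ in which only consecutive intervals meet; put $g_k:=\chi_k\circ f\in C_b(Y)$, so that $g_k\ge 0$, $\sum_kg_k=1$, only finitely many $g_k$ are nonzero (as $f$ has compact range), each $g_k$ is compactly supported except the single one $g_{k_0}$ whose interval contains $0$ — which nevertheless differs from $1$ by a function in $C_c(Y)$ — and $|f-c_k|<\epsilon$ on $\supp(g_k)$ for suitable $c_k\in\R$. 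Using $\sum_k\bar\pi(g_k)=\id$, write
\[
[T,\pi(f)]=\sum_{k,l}\bar\pi(g_k)\,[T,\pi(f)]\,\bar\pi(g_l).
\]
When $|k-l|\ge 2$ the supports of $g_k,g_l$ (and of $fg_k,fg_l$) are disjoint, so each such term is $C_0(X)$-compact by hypothesis — the finitely many terms involving $g_{k_0}$ being reduced to the hypothesis by writing $g_{k_0}=1-\tilde\sigma$ with $\tilde\sigma\in C_c(Y)$ and manipulating as in \cite{HRbook} — and there being only finitely many of them their sum is compact. When $|k-l|\le 1$ one replaces $[T,\pi(f)]$ by $[T,\pi(f-c_{kl})]$, where $c_{kl}$ can be chosen with $|f-c_{kl}|<2\epsilon$ on $\supp(g_k)\cup\supp(g_l)$, so that each such term has norm $O(\|T\|\epsilon)$; grouping these according to $k-l\in\{-1,0,1\}$ and, within each group, to the parity of $k$ (so that the remaining $\bar\pi(g_k)$ have pairwise orthogonal ranges, because $g_kg_{k'}=0$ for $|k-k'|\ge 2$, and using $\sum_kg_k^2\le 1$) bounds the whole near-diagonal sum by a fixed multiple of $\|T\|\epsilon$, uniformly in the fineness of the partition. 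Thus $[T,\pi(f)]$ is a compact operator plus one of norm $O(\|T\|\epsilon)$; letting $\epsilon\to 0$ and using closedness of $\maK_{C_0(X)}(\maE)$ completes the proof.

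The step I expect to require the most care is precisely the treatment of the single bounded‑but‑not‑$C_0$ cut‑off $g_{k_0}$ at infinity — i.e.\ verifying that the finitely many off‑diagonal terms involving it are still governed by the hypothesis — together with the organisation of the near‑diagonal sum so that its norm, and not merely that of its individual (and increasingly numerous) summands, is $O(\epsilon)$; both are routine but slightly delicate adaptations of the corresponding steps in \cite{HRbook}, Lemma 5.4.7.
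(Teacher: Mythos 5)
You have correctly identified where the difficulty sits, but the step you flag is not merely delicate: as written it is a genuine gap. Your forward direction is fine, and your near-diagonal estimate is fine as well (with $a_k=\bar\pi(g_k)$, $b_l=\bar\pi(g_l)$ and $X=[T,\pi(f)-c_{kl}]$ the positive elements $b_lX^*a_k^2Xb_l$ for different $l$ of the same parity annihilate one another, so the norm of each group is the maximum of the norms of its terms; all of this lives in the $C^*$-algebra $\maL_{C_0(X)}(\maE)$ and so passes to Hilbert modules), and this is the same Higson--Roe route the paper invokes, its own proof being omitted with a reference to \cite{HRbook}[Lemma 5.4.7]. The problem is the off-diagonal terms $\bar\pi(g_{k_0})[T,\pi(f)]\bar\pi(g_l)$ and $\bar\pi(g_l)[T,\pi(f)]\bar\pi(g_{k_0})$, $|k_0-l|\ge 2$, where $g_{k_0}$ is the cut-off which equals a nonzero constant at infinity. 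Expanding the commutator gives, e.g., $\bar\pi(g_{k_0})T\pi(fg_l)-\pi(fg_{k_0})T\pi(g_l)$; the second summand is covered by the hypothesis, but the first is not, and the substitution $g_{k_0}=1-\tilde\sigma$ only produces $T\pi(fg_l)-\pi(\tilde\sigma)T\pi(fg_l)$, in which the first term carries no $C_0$ factor on the left and in the second the supports of $\tilde\sigma=\sum_{k\neq k_0}g_k$ and of $fg_l$ are \emph{not} disjoint (indeed $\tilde\sigma\equiv 1$ on $\supp g_l$). So these finitely many terms are not reduced to the hypothesis by this manipulation.

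Moreover, no rearrangement can reduce them, because the implication you need at that point is false under the stated hypothesis when $Y$ is not compact. Take $X$ a point, $Y=\R$, $\maE=L^2(\R)$ with the multiplication representation, $(e_n)_{n\ge 2}$ an orthonormal basis of $L^2[0,1]$, $h_n$ a unit vector supported in $[n,n+1]$, and $T=\sum_{n\ge 2}\theta_{e_n,h_n}$, i.e.\ $T\xi=\sum_n e_n\langle h_n,\xi\rangle$. For every $g\in C_0(\R)$ the operator $T\pi(g)=\sum_n\theta_{e_n,\bar g h_n}$ is compact, since the remaining rank-one summands past level $N$ have pairwise orthogonal ranges and pairwise orthogonal initial vectors of norm at most $\sup_{[n,n+1]}|g|\to 0$; hence $\pi(f)T\pi(g)\in\maK_{C_0(X)}(\maE)$ for \emph{all} $f,g\in C_0(\R)$, in particular for all disjointly supported pairs with one of compact support. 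Yet for $f\in C_c(\R)$ with $f\equiv 1$ on $[0,1]$ and $\supp f\subset[-1,3/2]$ one has $T\pi(f)=0$ and $\pi(f)T=T$, so $[T,\pi(f)]=-T$ is not compact. The cure is to strengthen the input exactly at infinity: test against pairs $h\in C_c(Y)$, $g\in C_b(Y)$ with disjoint supports, using the extension $\bar\pi$ of the paper's remark following the definition of finite propagation (this condition is still necessary for pseudolocality, by your own Urysohn-function argument with $\bar\pi(g)T\pi(h)=\bar\pi(g)[T,\pi(\varphi)]\pi(h)$). With that hypothesis every off-diagonal term, including those involving $g_{k_0}$, is covered and your grid argument closes as written; and it is this stronger input that the paper's application actually provides, since for $T=\chi'(\maD)$ finite propagation of the wave operator kills $\pi(h)\exp(is\maD)\bar\pi(g)$ for bounded $g$ supported at positive distance from the compact set $\supp h$ (alternatively, assuming $T$ of finite propagation makes the far off-diagonal terms vanish outright).
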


From Theorem (\ref{Roe}) we conclude that for a normalizing function $\chi$, the operator $\chi(\maD)\in D^*_G(Y;\maE_{Y,E})$, while $\chi^2(\maD)-1\in C^*_G(Y, \maE_{Y, E})$. Thus, $\frac{1}{2}(1+\chi(\maD))$ gives a well-defined element in the $K$-theory group $K_0(Q^*_G(Y; \maE_{Y,E}))$. Let $\del_0$ be the boundary map in the $K$-theory
long exact sequence induced by the short exact sequence
$$
0\rightarrow C^*_G(Y;\maE_{Y, E})\rightarrow D^*_G(Y;\maE_{Y, E})\rightarrow Q^*_G(Y; E_{Y, E})\rightarrow 0
$$
We can now give the following definition of the coarse index.

\begin{definition}[Coarse $G$-index]
The (odd) coarse $G$-index $\Ind_G(\maD)$ i defined as the class $\Ind_G(\maD)$ which is the image of the class $\left[\frac{1}{2}(1+\chi(\maD))\right]$ under the boundary map
$$
\del_0: K_0(Q^*_G(Y; \maE_{Y,E})) \longrightarrow K_1 (C^*_G(Y; \maE_{Y,E})).
$$
So,
 $$
 \Ind_G(\maD):= \del_0\left(\left[\frac{1}{2}(1+\chi(\maD))\right] \right)\in K_1(C^*_G(Y;\maE_{Y, E}))
 $$
\end{definition}

{{Since the boundary map $\del_0$ is an exponential map, it is easy to see that the index class is represented by the unitary associated with $\chi$ by the formula $-e^{i\pi\chi (\maD)}$. In particular, the index class could as well be a priori defined by the Cayley transform $(i+\maD)(i-\maD)^{-1}$. }}
We have  treated the odd case which corresponds for families of Dirac-type operators to assuming that the fibers $Y_x=\rho^{-1}(x)$ be odd dimensional. The even case is similar and uses the notion of Voiculescu $G$-isometry.
It is worthpointing out that when $Y$ is  $G$-compact, Morita equivalence allows to deduce a well defined  index class  in {{$K_* (C^*_r (G))$}}, and we recover in this way the index class defined by Paterson in \cite{Paterson2}. Notice that Paterson's index class  extends the classical construction of Moore-Schochet for laminations of compact spaces \cite{MooreSchochet}, which in turn extends the Connes-index class for smooth foliations on compact manifolds   \cite{ConnesIntegration}.  We end this paper with the following corollary.

\begin{corollary}
Assume that $\maD$ is invertible with a gap in its spectrum around $0$, then $\maD$ has a well defined transgressed coarse $G$-index class $[\maD]$ which lives in $K_0 (D^*_G(Y;\maE_{Y, E}))$ whose image under the functoriality map $K_0 (D^*_G(Y;\maE_{Y, E}))\rightarrow K_0(Q^*_G(Y; E_{Y, E}))$ is the class $\left[\frac{1}{2}(1+\chi(\maD))\right]$. \end{corollary}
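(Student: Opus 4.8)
The plan is to produce $[\maD]$ directly as the class of an honest projection in $D^*_G(Y;\maE_{Y,E})$, using the spectral gap to upgrade the mod-$C^*_G$ idempotent $\tfrac12(1+\chi(\maD))$ appearing in the definition of the coarse $G$-index to an idempotent on the nose.

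First I would pin down the hypothesis: invertibility of $\maD$ with a gap around $0$ means that there is $\delta>0$ with $\mathrm{spec}(D_x)\cap(-\delta,\delta)=\emptyset$ for every $x$, equivalently $\mathrm{spec}(\maD)\subseteq\R\setminus(-\delta,\delta)$ for the regular self-adjoint operator $\maD$ on $\maE_{Y,E}$. Then I would choose the normalizing function $\chi$ adapted to the gap: odd, continuous, and $\equiv 1$ on $[\delta,\infty)$ (hence $\equiv-1$ on $(-\infty,-\delta]$); such a $\chi$ is normalizing in the sense of the preceding definition, and in fact $\chi=\mathrm{sign}$ on $\mathrm{spec}(\maD)$. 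By Roe's Lemma (Theorem \ref{Roe}) the operator $\chi(\maD)$ lies in $D^*_G(Y;\maE_{Y,E})$ and is $G$-invariant; moreover it is self-adjoint, and since $\chi^2$ and the constant function $1$ agree on $\R\setminus(-\delta,\delta)\supseteq\mathrm{spec}(\maD)$, the fiberwise spectral theorem on each $L^2(Y_x,E_x)$ gives $\chi(\maD)^2=(\chi(D_x)^2)_x=\Id$. Hence $\chi(\maD)$ is a self-adjoint involution, and
\[
p_\chi:=\tfrac12\bigl(1+\chi(\maD)\bigr)
\]
is a genuine projection in $D^*_G(Y;\maE_{Y,E})$. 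I would set $[\maD]:=[p_\chi]\in K_0(D^*_G(Y;\maE_{Y,E}))$.

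Next I would check independence of the auxiliary choices. If $\chi_0,\chi_1$ are two normalizing functions which both equal $\mathrm{sign}$ on $\mathrm{spec}(\maD)$ (each adapted to its own gap), the affine path $\chi_s=(1-s)\chi_0+s\chi_1$, $s\in[0,1]$, again consists of normalizing functions — oddness, positivity on $(0,\infty)$, and the limit condition at $\pm\infty$ are all preserved by convex combinations — and each $\chi_s$ still equals $\mathrm{sign}$ on $\mathrm{spec}(\maD)$. Thus each $\chi_s(\maD)$ is an involution in $D^*_G(Y;\maE_{Y,E})$ by Roe's Lemma, and $s\mapsto p_{\chi_s}$ is a norm-continuous path of projections there, because the bounded functional calculus of $\maD$ is $1$-Lipschitz for the uniform norm while $\chi_s-\chi_{s_0}=(s-s_0)(\chi_1-\chi_0)$ with $\chi_1-\chi_0\in C_0(\R)$. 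Hence $[p_{\chi_0}]=[p_{\chi_1}]$, so $[\maD]$ is a well-defined element of $K_0(D^*_G(Y;\maE_{Y,E}))$.

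Finally I would identify the image of $[\maD]$ under the map $\pi_*\colon K_0(D^*_G(Y;\maE_{Y,E}))\to K_0(Q^*_G(Y;\maE_{Y,E}))$ induced by the quotient homomorphism $\pi\colon D^*_G(Y;\maE_{Y,E})\twoheadrightarrow Q^*_G(Y;\maE_{Y,E})$. By construction $\pi_*[\maD]=[\pi(p_\chi)]=\bigl[\tfrac12(1+\pi(\chi(\maD)))\bigr]$, and $\pi(\chi(\maD))$ is exactly the image of $\chi(\maD)$ in the quotient Roe algebra, so $\tfrac12(1+\pi(\chi(\maD)))$ is the self-adjoint idempotent whose $K_0$-class is the element $\bigl[\tfrac12(1+\chi(\maD))\bigr]$ entering the definition of $\Ind_G(\maD)$; by the last assertion of Theorem \ref{Roe} this class is independent of $\chi$. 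Therefore $\pi_*[\maD]=\bigl[\tfrac12(1+\chi(\maD))\bigr]$, as claimed. I expect the only genuinely delicate step to be the passage from "idempotent modulo $C^*_G$" to "idempotent", i.e.\ the identity $\chi(\maD)^2=\Id$: it hinges on reading the spectral gap uniformly in $x$, so that the continuous functional calculus of $\maD$ only sees the region of $\R$ where $\chi^2\equiv1$; once this is granted, the remaining arguments are routine $K$-theory bookkeeping.
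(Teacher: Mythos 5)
Your argument is correct and follows the same route as the paper's proof: exploit the spectral gap to choose a normalizing function $\chi$ with $\chi^2=1$ on $\mathrm{spec}(\maD)$, so that $\tfrac12(1+\chi(\maD))$ is already a projection in $D^*_G(Y;\maE_{Y,E})$ (lying there by Theorem \ref{Roe}) and defines $[\maD]$, its image in the quotient being the class $\left[\tfrac12(1+\chi(\maD))\right]$. Your additional checks (well-definedness via the convex homotopy of normalizing functions, and the compatibility with the quotient map) are routine elaborations of what the paper leaves implicit.
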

\begin{proof}\
Indeed,  we may then  choose $\chi$ such that $\chi^2=1$ on the spectrum of $\maD$ and then we see that $\frac{1}{2}(1+\chi(\maD))$ is already a projection in $D^*_G(Y;\maE_{Y, E})$ which defines the class $[\maD]$ and which embodies the vanishing of the index class $\Ind_G(\maD)$.
\end{proof}

The previous proposition applies when the $G$-equivariant family of smooth bounded geometry manifolds is a spin family and admits (uniformly in the $X$-variable) positive scalar curvature. For foliations this is related with the recent results of \cite{BenameurHeitsch}. A similar statement happens when we only assume positive scalar curvature at infinity but using relative Roe algebras. These developments will be treated in a forthcoming paper in relation with the Gromov-Lawson relative index theorem.

%

\bigskip

\appendix

\section{$G$-representations in $G$-modules}\label{AppendixA}


We  give the definition and gather the properties of $G$-algebras and representations of $G$-algebras in $G$-modules, that will be used in the sequel. For most of the material about groupoid actions on $C^*$-algebras, we refer to  \cite{LeGall, Tu}. Given a  $C^*$-algebra $A$, we denote by $MA$ the  $C^*$-algebra of multipliers of $A$ \cite{Kasparov}, and the center of $M(A)$ is denoted by $ZM(A)$.

\begin{definition}\cite{LeGall}[Definition 3.1]
\begin{enumerate}
\item A $C_0(X)$-algebra is a couple $(A,\theta)$, where $A$ is a $C^*$-algebra and $\theta: C_0(X)\rightarrow ZM(A)$ is a $*$-homomorphism such that $\theta(C_0(X))A=A$.
\item A morphism $\phi: (A, \theta_A)\rightarrow (B, \theta_B)$ of $C_0(X)${-}algebras is a $*$-homomorphism $\phi: A\rightarrow B$ such that the following relation holds:
$$
\phi(\theta_A(f)a)=\theta_B(f)(\phi(a)), \quad \text{ for }a\in A \text{ and } f\in C_0(X).
$$
\end{enumerate}
\end{definition}

So, a $C_0(X)$-algebra structure on the $C^*$-algebra $A$ endows it with the structure of a $C_0(X)$-module.
Given a $C_0(X)$-algebra $A$, the fibre of $A$ at  $x\in X$ is $A_x:= A\, /\, \theta(C_x)A$, where $C_x$ is the ideal of functions in $C_0(X)$ vanishing at $x$. This yields an u.s.c.  field $(A_x)_{x\in X}$ of $C^*$-algebras.

The $C^*$-algebra $C_0(X)$ is itself a $C_0(X)$-algebra ($\theta$ is just the multiplication operator by the given function), and more generally for  any locally compact Hausdorff space $Y$ with a continuous map $\rho: Y\to X$, we may endow the $C^*$-algebra $C_0(Y)$ with the $C_0(X)$-algebra structure  corresponding to the $*$-homomorphism $\theta=\rho^*$ defined by $\rho^*(f)=f\circ \rho$.
\begin{remark}
If $f:Y\to X$ is a continuous map between locally compact Hausdorff spaces, then the u.s.c. field associated with the $C_0(X)$-algebra $(C_0(Y), f^*)$ corresponds to a continuous field in the sense of Dixmier if and only if $f$ is open. See for instance \cite{Blanchard, GengouxTuXu}.
\end{remark}
If we are given a $C_0(X)$-algebra $(A, \theta)$, then we can pull it back to a $C_0(Y)$-algebra $(\rho^*A, \rho^*\theta)$.
Moreover, given a morphism $\phi: (A, \theta_A)\rightarrow (B, \theta_B)$ of $C_0(X)${{-}}algebras, we easily get a morphism  of $C_0(Y)$-algebras between the pull-backs
$$
\rho^*\phi :(\rho^*A, \rho^*\theta_A)\longrightarrow  (\rho^*B, \rho^*\theta_B).
$$
See  again \cite{LeGall} for the details of these standard constructions.

In particular, we may consider the $C^*$-algebras $r^*A$ and $s^*A$ which are endowed with the structures of $C_0(G)$-algebras.  Recall the  set  of pairs of composable arrows   $G^{(2)}=\{ (g , g')\in G^2, s(g)=r(g')\}$ for our groupoid $G$.  Then we have the three maps $\pi_{01},\pi_{12}$ and $\pi_{02}$ from $G^{(2)}\rightarrow G^{(1)}$ corresponding respectively, to  projection onto the first component, projection onto the second component, and composition of arrows.
The following relations hold:
$$
r\circ \pi_{01}=r\circ \pi_{02},\quad s\circ\pi_{12}=s\circ \pi_{02},\quad \text{ and } \quad s\circ\pi_{01}=r\circ\pi_{12}.
$$

%
%

\begin{definition}\label{Galgebra}
A $G$-algebra is a $C_0(X)$-algebra $(A, \theta)$ together with an isomorphism $\alpha:s^*A\rightarrow r^*A$ of $C_0(G)$-algebras such that
$$
\pi_{02}^*\alpha=\pi_{01} ^*\alpha \circ \pi_{12}^*\alpha,
$$
where these maps are seen as $C_0(G^{(2)})$-algebra isomorphisms from $\pi_{02}^*s^*A$ to $\pi_{02}^*r^*A$.
\end{definition}

So, we have the following commutative diagram
\[
\begin{CD}
\pi_{12}^*s^*A @> \pi_{12}^*\alpha    >> \pi_{12}^*r^*A \\
@V\pi_{02}^*\alpha VV     @VV \id V\\
\pi_{01}^*r^*A @< \pi_{01}^*\alpha <<\pi_{01}^*s^*A\\
\end{CD}
\]

\begin{remark}
In term of the corresponding u.s.c. field $(A_x)_{x\in X}$, the above isomorphism $\alpha: r^*A\rightarrow s^*A$ of $C_0(G)$-algebras satisfies  the expected relation
$\alpha_{gg'}=\alpha_g\circ \alpha_{g'}$,   for $(g,g')\in G^{(2)}.$
\end{remark}

\begin{definition}
Let $(A, \theta_A, \alpha_A)$ and $(B, \theta_B, \alpha_B)$ be $G$-algebras.
 A morphism  between these two $G$-algebras is a morphism $\phi$ between the $C_0(X)$-algebras  $(A, \theta_A)$ to $(B, \theta_B)$ such that
$$
r^*\phi\circ \alpha^A=\alpha^B\circ s^*\phi.
$$
\end{definition}

So the following diagram is assumed to commute:
\[
\begin{CD}
s^*A @> \alpha^A    >> r^*A \\
@Vs^*\phi VV     @VV r^*\phi V\\
s^*B @> \alpha^B >>r^*B
\end{CD}
\]

We now review the notion of a Hilbert $G$-module. For the classical material about Hilbert modules over $C^*$-algebras, we refer the reader for instance to \cite{Kasparov} or to the more recent monograph \cite{Lance}.
Let $E$ be a Hilbert $A$-module where $A$ is assumed to be a $G$-algebra with the isomorphism $\alpha:s^*A\rightarrow r^*A$ satisfying the conditions of Definition (\ref{Galgebra}). Then we  may define the fibre of $E$ at $x\in X=G^{(0)}$ as being  $E_x:= E\otimes_A A_x.$ Then $E_x$ is inherits the structure of a Hilbert $A_x$-module.
We define the Hilbert $r^*A$-module, denoted $r^*E$ so that its fibre at $g\in G$ is given by $E_{r(g)}$.
More precisely,
$$
r^*E:= E\otimes_{A}r^*A,
$$
carries a left module action of $A$ through multiplication on the first factor.
Similarly we can define the Hilbert $s^*A$-module $s^*E$. In this situation, the notion of adjointable operator between Hilbert modules over isomorphic $C^*$-algebras is well defined \cite{Lance}. In particular, we shall denote by $\maL_\alpha (s^*E,r^*E)$ the space of adjointable $\alpha$-linear operators. More precisely, using the isomorphism $\alpha$, we endow $r^*E$ with the structure of a Hilbert module over the $C^*$-algebra $s^*A$ and $\maL_\alpha (s^*E, r^*E)$ is the space of adjointable operators between the Hilbert $s^*A$-modules thus obtained. This is the space of linear maps $T:s^*E\rightarrow r^*E$ such that $T(\xi u) = T(\xi) \alpha (u)$ for any $\xi \in s^*E$ and  $u\in s^*A$
and which are adjointable in the sense that there exists an $\alpha^{-1}$-linear operator $T^\sharp: r^*E\rightarrow s^*E$ such that $\left< T(\xi), \eta \right> = \alpha \left( \left<\xi, T^\sharp (\eta) \right>\right)$ for any $\xi\in s^*E$ and $\eta\in r^*E$.

\begin{definition}\
Let $(A, \theta, \alpha)$ be a $G$-algebra as before. A Hilbert $A$-module $E$ is endowed with the structure of a Hilbert $G$-module if we are given a unitary element $V\in \maL_\alpha (s^*E,r^*E)$ such that (setting $V_{ij}=\pi_{ij}^*V$)
$$
V_{01} \circ V_{12} = V_{02} \text{ as elements of } \maL_{C_0(G^{(2)})} (\pi_{02}^*s^*E, \pi_{02}^*r^*E).
$$
\end{definition}

%
%

If we fix a Hilbert $G$-module $(E, V)$ over the $G$-algebra $(A, \theta, \alpha)$ and let $\what{V}: \maL_{C_0(G)}(s^*E)\rightarrow \maL_{C_0(G)}(r^*E)$ be  conjugation by $V$, i.e.
$$
\widehat{V}(T):= V\circ T\circ V^*, \quad\text{ for }T\in \maL_{C_0(G)}(s^*E).
$$
Clearly the operator   $\what{V} (T)$ is then adjointable on the Hilbert $r^*A$-module $r^*E$.

\begin{definition}
An element $T\in \maL_{C_0(X)}(E)$ is called $G$-equivariant if the following equality holds:
$$
\what{V}(s^*T)=r^*T
$$
where $s^*T=T\otimes_{C_0(X),s} \id\in \maL_{C_0(G)}(s^*E)$ and $r^*T=T\otimes_{C_0(X),r} \id\in \maL_{C_0(G)}(r^*E)$.
\end{definition}

The space of $G$-equivariant elements in $\maL_{C_0(X)}(E)$ forms a $C^*$-subalgebra that we denote by $\maL_{C_0(X)}(E)^G$.

We can as well use the maps $\pi_{01}, \pi_{12}$ and $\pi_{02}$ defined above to pull-back one step further the Hilbert $C_0(X)$-module $E$ and get Hilbert modules
$$
\pi_{01}^*(s^*E) = \pi_{12}^*(r^*E), \; \pi_{01}^* r^*E = \pi_{02}^* r^*E \;\text{ and } \; \pi_{12}^* s^*E = \pi_{02}^* s^*E.
$$
Then the pull-back of the transformation $\what{V}$ gives
$$
\maL_{C_0(G^{(2)})}(\pi_{12}^*s^*E) \stackrel{\what{V}_{12}}{\longrightarrow}\maL_{C_0(G^{(2)})}(\pi_{12}^*r^*E) = \maL_{C_0(G^{(2)})}(\pi_{01}^*s^*E) \stackrel{\what{V}_{01}}{\longrightarrow} \maL_{C_0(G^{(2)})}(\pi_{01}^*r^*E)
$$
and also
$$
\maL_{C_0(G^{(2)})}(\pi_{12}^*s^*E) = \maL_{C_0(G^{(2)})}(\pi_{02}^*s^*E) \stackrel{\what{V}_{02}}{\longrightarrow}  \maL_{C_0(G^{(2)})}(\pi_{02}^*r^*E) = \maL_{C_0(G^{(2)})}(\pi_{01}^*r^*E).
$$
From the properties of the unitary $V$, we deduce that the map $\what{V}$ satisfies the cocycle condition
$$
\what{V}_{01}\circ \what{V}_{12}= \what{V}_{02}.
$$


To end this appendix, we say some words about $G$-equivariant representations.
\begin{definition}
Let $(A, \theta)$ be a $C_0(X)$-algebra. A $C_0(X)$-representation of $A$ is given by a Hilbert $C_0(X)$-module $E$ together with a $*$-homomorphism $\pi: A \rightarrow \maL_{C_0(X)}(E)$ such that:
$$
\pi(\theta(f)(a))(e) = \pi(a)(ef) \quad\text{ for }f\in C_0(X), a \in A\text{ and } e\in E.
$$
\end{definition}

So,  equivalently the $*$-representation $\pi$ must satisfy the condition
$$
\pi\circ \theta(f)=R_f \circ \pi: A \rightarrow \maL_{C_0(X)}(E),\quad \text{ for any }f\in C_0(X).
$$
Here $R_f$ is the (adjointable) operator implementing the right module action of $C_0(X)$ on $E$. It is clear by definition that any $C_0(X)$-representation $\pi$ gives rise to a field of {{representations}} in Hilbert spaces $(\pi_x: A_x \to \maL (E_x))_{x\in X}$, where $(E_x)_{x\in X}$ is the field of Hilbert spaces over $X$ which is associated with $E$.

\begin{definition}[$G$-equivariant representation]\
Let $(A, \theta, \alpha)$ be a $G$-algebra and let $(E, V)$ be a Hilbert $G$-module. A $C_0(X)$-representation $\pi: A \rightarrow \maL_{C_0(X)}(E)$ is  $G$-equivariant  if the following diagram commutes:

\[
\begin{CD}
s^*A @> \alpha    >> r^*A \\
@Vs^*\pi VV     @VV r^*\pi V\\
\maL_{C_0(G)}(s^*E) @> \hat{V} >> \maL_{C_0(G)}(r^*E)
\end{CD}
\]
\end{definition}

In terms of the u.s.c. field of $C^*$-algebras associated with $A$ and the u.s.c. field of Hilbert modules associated with $E$, the  equivariance property can be written  as expected
$$
\pi_{r(g)} [\alpha_g(x)] = V_g \circ \pi_{s(g)} (x) \circ V_g^{*}, \text{ for  } g\in G \text{ and } x\in A_{s(g)},
$$

\section{Continuous fields of operators}\label{AppendixB}


Let $X$ be a paracompact locally compact Hausdorff space. For each $x \in X$, let $H_x$ be a complex separable Hilbert space.
\begin{definition}[Continuous field of Hilbert spaces, compare cf. \cite{DixmierC*}, Definition 10.1.2]
A continuous field of Hilbert spaces is a (complex) linear subspace $F {{\subset}} \prod_{x\in X} H_x$ such that:
\begin{enumerate}
\item(totality) the collection $\{u(x): u \in F\} \subset H_x\}$ is dense in $H_x$,
\item(norm continuity) for every $u \in F$, the map $x\mapsto ||u(x)||$ is a continuous function vanishing at infinity,
\item(closure under uniform local approximability) given $v\in \prod_{x\in X} H_x$, if for each $\epsilon>0$ and each $x_0\in X$ there exists an element $u \in F$ and a neighbourhood $U_{x_0}$ of $x_0$ such that $||v(x) - u(x)||<\epsilon, \forall x \in U_{x_0}$, then $v \in F$.
\end{enumerate}
\end{definition}


%
%

\begin{definition}[Continuous field of bounded operators, cf. \cite{Phillips}, Chapter 1]
A continuous field of bounded operators on $F$ is a family $t= \{t_x\}_{x\in X} \in \prod_{x\in X} B(H_x)$ such that:
\begin{enumerate}
\item (uniform bound in norm) $||t||:=\sup_{x\in X} ||t_x|| <\infty$
\item(strong-* continuity) for $u \in F$, the elements $tu= \{t_xu_x\}_{x\in X}$  and $t^*u = \{t^*_xu_x\}_{x\in X}$ belong to $F$.
\end{enumerate}
\end{definition}

A continuous field of Hilbert spaces $F$ over $X$ gives rise to a full Hilbert $C_0(X)$-module $\maH$.
We shall denote the collection of such continuous fields of bounded operators by $\maB$.

\begin{remark} (cf. \cite{Phillips}, Chapter 1)\label{Adjointability_Continuousfields}
Let $t\in \maB$. Then $t$ induces an adjointable operator on $T$ on the Hilbert $C_0(X)$-module $\maH$. In turn, any element $T\in \maL_{C_0(X)}(\maH)$ defines a continuous field of bounded operators $t \in \maB$.
\end{remark}

The following result is standard and will be used later.

\begin{lemma}[Dense range]\label{denserange}
Let $\S=(S_x)_{x\in X}$ be a continuous family of bounded operators on $F$, such that $\Range(S_x)$ is dense in $H_x$ for each $x\in X$. Then the adjointable operator $S$ induced by $\S$ has dense range on $\maH$.
\end{lemma}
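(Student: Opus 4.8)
The plan is to establish density of $\Range(S)$ directly, by a local-to-global patching argument, using that the module norm on $\maH$ is the supremum of the fibrewise Hilbert norms and that $S$ acts fibrewise, $(S\eta)(x)=S_x\eta(x)$. So I fix $\xi\in\maH$ and $\epsilon>0$, and aim to produce $\eta\in\maH$ with $\sup_{x\in X}\|\xi(x)-S_x\eta(x)\|_{H_x}\le\epsilon$, which is exactly $\|\xi-S\eta\|_{\maH}\le\epsilon$.

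First, a pointwise approximation. Fix $x_0\in X$. By totality, $\{u(x_0):u\in F\}$ is dense in $H_{x_0}$, and since $S_{x_0}$ is bounded with dense range, $\{S_{x_0}u(x_0):u\in F\}$ is also dense in $H_{x_0}$; hence there is a section $\eta_{x_0}\in F$ with $\|\xi(x_0)-S_{x_0}\eta_{x_0}(x_0)\|<\epsilon$. Since $\xi-S\eta_{x_0}$ lies in $F$, the norm-continuity axiom yields an open neighbourhood $U_{x_0}$ of $x_0$, which we may take relatively compact, on which $\|\xi(x)-S_x\eta_{x_0}(x)\|<\epsilon$.

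Next, reduce to a finite cover so that the local approximants can actually be summed inside $\maH$. The function $x\mapsto\|\xi(x)\|$ vanishes at infinity, so $K:=\{x:\|\xi(x)\|\ge\epsilon\}$ is compact; choose $x_1,\dots,x_n$ with $K\subseteq\bigcup_{i=1}^n U_{x_i}$ and put $U_0:=X\setminus K=\{x:\|\xi(x)\|<\epsilon\}$, with the convention that the section attached to $U_0$ is $0$. Then $\{U_0,U_{x_1},\dots,U_{x_n}\}$ is a finite open cover of $X$, and since $X$ is paracompact Hausdorff, hence normal, there is a subordinate partition of unity $\{\phi_0,\phi_1,\dots,\phi_n\}$ with $\phi_1,\dots,\phi_n\in C_c(X)$. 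Set $\eta:=\sum_{i=1}^n\phi_i\eta_{x_i}\in\maH$, a finite sum of elements of $\maH$. Because $S$ is $C_0(X)$-linear and acts fibrewise, $(S\eta)(x)=\sum_{i=1}^n\phi_i(x)S_x\eta_{x_i}(x)$, so, using $\sum_{i=0}^n\phi_i\equiv1$,
\[
\xi(x)-(S\eta)(x)=\phi_0(x)\xi(x)+\sum_{i=1}^n\phi_i(x)\bigl(\xi(x)-S_x\eta_{x_i}(x)\bigr).
\]
On $\supp\phi_0\subseteq U_0$ one has $\|\xi(x)\|<\epsilon$, and on $\supp\phi_i\subseteq U_{x_i}$ (for $i\ge1$) one has $\|\xi(x)-S_x\eta_{x_i}(x)\|<\epsilon$, whence $\|\xi(x)-(S\eta)(x)\|\le\sum_{i=0}^n\phi_i(x)\,\epsilon=\epsilon$ for every $x$. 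Thus $\|\xi-S\eta\|_{\maH}\le\epsilon$; as $\epsilon>0$ and $\xi\in\maH$ were arbitrary, $S$ has dense range.

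The one genuinely delicate point is the patching: local approximants exist around every point, but to assemble them into a single element of $\maH$ one cannot simply invoke an arbitrary locally finite partition of unity, since an infinite combination of fibrewise approximants need not vanish at infinity. The fix is precisely the reduction to a \emph{finite} cover, made possible because $\xi$ vanishes at infinity so that $\{x:\|\xi(x)\|\ge\epsilon\}$ is compact; after that one only needs the harmless fact that multiplying a section of $F$ by a $C_c(X)$-function again yields a section of $F$.
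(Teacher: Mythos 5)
Your argument is correct and follows essentially the same route as the paper's own proof: pointwise approximation in each fibre using totality and density of $\Range(S_{x_0})$, continuity of $x\mapsto\|\xi(x)-S_x\eta_{x_0}(x)\|$ to spread the estimate over a neighbourhood, compactness of the superlevel set of $\|\xi(\cdot)\|$ coming from vanishing at infinity, a finite subcover, and a subordinate partition of unity with a convexity estimate. The only (cosmetic) difference is that you adjoin the open set $U_0=\{x:\|\xi(x)\|<\epsilon\}$ with the zero section to get a finite cover of all of $X$, whereas the paper works with two compact level sets $K_1\subset K_2$ and multiplies the patched section by a compactly supported cutoff equal to $1$ on $K_1$; both devices serve the same purpose of keeping the approximant inside $\maH$.
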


For a Banach algebra $B$ we denote the spectrum of an element $a\in B$ by $Sp(a)$.
Notice that if $t=\{t_x\}_{x\in X}\in \maB$  induces the operator $T \in \maL_{C_0(X)}(\maH)$, then, we have for each $x\in X$, $Sp(t_x)\subseteq Sp(T)$.

%
%
%

\begin{lemma}\label{compatibilitybounded}
Let $T\in \maL_{C_0(X)}(\maH)$ be self-adjoint, inducing an element $t\in \maB$ such that each $t_x$ is self-adjoint. Let $f \in C_b(\R)$, then the operator $f(T) \in \maL_{C_0(X)}(\maH)$ given by the continuous functional calculus induces a continuous field of bounded operators $s \in \maB$ such that $s_x =f(t_x)$.
\end{lemma}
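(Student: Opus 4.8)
The plan is to transport the continuous functional calculus from $\maL_{C_0(X)}(\maH)$ to $\maB$ through the isometric $*$-isomorphism of Remark~\ref{Adjointability_Continuousfields}, first for polynomials and then by uniform approximation. Recall that the constant field $(\id_{H_x})_{x\in X}$ lies in $\maB$ and corresponds to $\id_\maH$, that the norm on $\maB$ is $\|t\| = \sup_{x\in X}\|t_x\|$ (so the identification with $\maL_{C_0(X)}(\maH)$ is isometric), and, as already noted just before the statement, that $Sp(t_x)\subseteq Sp(T)$ for all $x$. Since each $t_x$ is self-adjoint and $\|t_x\|\le\|T\|$, we moreover have $Sp(t_x)\subseteq K:=[-\|T\|,\|T\|]$ for every $x$.

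For a complex polynomial $p(\lambda)=\sum_k a_k\lambda^k$, the element $p(T)=\sum_k a_k T^k$ of $\maL_{C_0(X)}(\maH)$ is obtained from $T$ and $\id_\maH$ by the ring operations; applying the inverse of the isomorphism of Remark~\ref{Adjointability_Continuousfields}, which is again a $*$-homomorphism, shows that $p(T)$ induces the field $(p(t_x))_{x\in X}\in\maB$.

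Now let $f\in C_b(\R)$. As $T$ is bounded, $K$ is compact, and by the Weierstrass approximation theorem there are polynomials $p_n$ with $\sup_{\lambda\in K}|p_n(\lambda)-f(\lambda)|\to 0$. Using that the continuous functional calculus of the self-adjoint operator $T$ is norm-decreasing, $\|g(T)\|=\sup_{Sp(T)}|g|\le\sup_K|g|$, one gets $p_n(T)\to f(T)$ in $\maL_{C_0(X)}(\maH)$; and for each $x$, likewise $\|p_n(t_x)-f(t_x)\|=\sup_{Sp(t_x)}|p_n-f|\le\sup_K|p_n-f|$, so the fields $\big(p_n(t_x)\big)_{x\in X}$ converge, uniformly in $x$, to the field $\big(f(t_x)\big)_{x\in X}$. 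Since $\maB$ is norm-complete, the limiting field belongs to $\maB$; and since the isomorphism of Remark~\ref{Adjointability_Continuousfields} is isometric and carries $p_n(T)$ to $\big(p_n(t_x)\big)_{x}$, passing to the limit it carries $f(T)$ to $\big(f(t_x)\big)_x$. Thus $f(T)$ induces $s\in\maB$ with $s_x=f(t_x)$, as claimed.

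The one slightly delicate point — and the step I would flag as the main obstacle — is verifying that the limiting field $\big(f(t_x)\big)_x$ genuinely satisfies the axioms of $\maB$, in particular the strong-$*$ continuity of $x\mapsto f(t_x)$; this is taken care of precisely by completeness of $\maB$, i.e. its closure under uniform local approximability, which is exactly why the approximation is run at the level of fields and not merely fiberwise. The remainder is the routine compatibility of the continuous functional calculus with an isometric $*$-isomorphism of $C^*$-algebras.
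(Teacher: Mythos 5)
Your proof is correct and follows essentially the same route as the paper: the polynomial case via the $*$-isomorphism between $\maB$ and $\maL_{C_0(X)}(\maH)$, then a uniform (Weierstrass) approximation argument on a compact interval containing all the spectra, using completeness of $\maB$ to conclude. The paper's proof is just a one-line version of exactly this, so no further comment is needed.
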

\begin{proof}
The statement is obvious for polynomials, and for $f\in C_b(Sp(T))$ one uses a standard approximation argument.
\end{proof}

We now consider fields of closed (unbounded) operators. start with the following proposition/definition.

\begin{proposition}\cite{Skandaliscourse, Lance}\label{regularop}
Let $T$ be a densely defined, closed, unbounded operator on $\maH$, and suppose that $T^*$ is also densely-defined. Then the following conditions are equivalent:
\begin{itemize}
\item $I+T^*T$ has dense image.
\item $I+T^*T$ is surjective.
\item The graph of $T$, denoted $G(T),$ is an orthocomplemented submodule of $\maH\oplus \maH$, with $G(T)^\perp = \sigma G(T^*)$, where $\sigma: \maH\oplus \maH\rightarrow \maH\oplus \maH$ is the map $(x,y)\mapsto (y,-x)$.
\item The projection $p$ onto $G(T)$ is a self-adjoint idempotent in $\maL(\maH\oplus \maH)$.
\end{itemize}
\end{proposition}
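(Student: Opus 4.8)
The plan is to establish the cycle of implications $(1)\Leftrightarrow(2)$ and then $(2)\Rightarrow(4)\Rightarrow(3)\Rightarrow(2)$; this is essentially the classical regularity theorem (see \cite{Lance}, Chapter~9, and \cite{Skandaliscourse}), so I would mainly organise the known arguments and flag where the Hilbert-module subtleties enter.

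First, for $(1)\Leftrightarrow(2)$: since $\Dom(T^*T)\subseteq\Dom(T)$, for $x\in\Dom(T^*T)$ we have $\langle(I+T^*T)x,x\rangle=\|x\|^2+\|Tx\|^2$, so by Cauchy--Schwarz $\|(I+T^*T)x\|\geq\|x\|$ and $\|Tx\|^2\leq\|(I+T^*T)x\|\,\|x\|$. In particular $I+T^*T$ is injective. If $(I+T^*T)x_n\to y$, the first estimate makes $(x_n)$ Cauchy, say $x_n\to x$, and the second makes $(Tx_n)$ Cauchy; closedness of $T$ gives $x\in\Dom T$ and $Tx_n\to Tx$, and then $T^*(Tx_n)=(I+T^*T)x_n-x_n\to y-x$ together with closedness of $T^*$ gives $Tx\in\Dom T^*$, $x\in\Dom(T^*T)$ and $(I+T^*T)x=y$. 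Hence $\Range(I+T^*T)$ is closed, so if it is dense it is all of $\maH$; the converse is trivial. Thus $(2)$ makes $I+T^*T$ a bijection, with bounded positive inverse $Q:=(I+T^*T)^{-1}\in\maL(\maH)$, $\|Q\|\leq1$.

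Next, $(2)\Rightarrow(4)$. Here I would use the bounded transform: from $\|TQ\xi\|^2=\langle Q\xi,\xi\rangle-\|Q\xi\|^2\leq\|\xi\|^2$ one gets that $TQ$ extends to an element of $\maL(\maH)$ (this is where one must invoke the standard Hilbert-module argument, since boundedness alone does not give adjointability), and symmetrically that $I+TT^*$ is bijective with inverse $\widetilde Q\in\maL(\maH)$ and $TQT^*=I-\widetilde Q$. One then checks that
$$
p:=\begin{pmatrix} Q & QT^*\\ TQ & I-\widetilde Q\end{pmatrix}
$$
is a well-defined self-adjoint idempotent in $\maL(\maH\oplus\maH)$ (the entries being the bounded extensions of $Q$, $QT^*$, $TQ$, $TQT^*$) whose range, computed on the dense submodule $\{(\xi,\eta):\eta\in\Dom T^*\}$ by $p(\xi,\eta)=\big(Q(\xi+T^*\eta),\,TQ(\xi+T^*\eta)\big)$, is exactly $G(T)$. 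This gives $(4)$, and simultaneously $I-p$ is seen to project onto $\{(T^*z,-z):z\in\Dom T^*\}=\sigma G(T^*)$, since $(I-p)(\xi,0)=\big((I-Q)\xi,-TQ\xi\big)=\sigma\big(TQ\xi,\,T^*TQ\xi\big)$ because $T^*TQ\xi=(I+T^*T)Q\xi-Q\xi=\xi-Q\xi$. As an orthogonally complemented submodule is by definition one admitting such a projection in $\maL$, and as $\sigma G(T^*)\subseteq G(T)^\perp$ always (the one-line computation $\langle(x,Tx),(T^*z,-z)\rangle=\langle x,T^*z\rangle-\langle Tx,z\rangle=0$), the implication $(4)\Rightarrow(3)$ follows, with $G(T)^\perp=\sigma G(T^*)$ because $I-p$ already projects onto $\sigma G(T^*)$.

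Finally $(3)\Rightarrow(2)$ is immediate: if $G(T)$ is orthogonally complemented with complement $\sigma G(T^*)$, then $\maH\oplus0\subseteq G(T)+\sigma G(T^*)$, so every $\xi$ can be written $(\xi,0)=(x,Tx)+(T^*z,-z)$ with $x\in\Dom T$, $z\in\Dom T^*$; comparing coordinates gives $z=Tx$ (so $Tx\in\Dom T^*$, i.e.\ $x\in\Dom(T^*T)$) and $(I+T^*T)x=\xi$. The main obstacle, and the only genuinely non-formal point, is the module-theoretic part of $(2)\Rightarrow(4)$: over a general Hilbert $C^*$-module a bounded everywhere-defined map need not be adjointable, and the operator identities defining $\widetilde Q$ carry domain subtleties, so one must argue with care that $TQ$ (hence the matrix $p$) really lies in $\maL$ and that $I+TT^*$ is again bijective. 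This is precisely what the bounded-transform machinery of \cite{Lance, Skandaliscourse} supplies; everything else is bookkeeping with graphs and the flip $\sigma$.
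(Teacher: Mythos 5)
Your reconstruction is sound and follows the same classical route that the paper itself adopts purely by citation (Skandalis's notes for the equivalence of the first two items, Lance's Theorem 9.3 and Proposition 9.5 for the remaining ones): the coercivity estimates and the closed-range argument for $(1)\Leftrightarrow(2)$, the bounded transform $Q=(I+T^*T)^{-1}$ and an explicit graph projection for $(2)\Rightarrow(4)\Rightarrow(3)$, and the direct splitting for $(3)\Rightarrow(2)$. The computations you do write out ($\|(I+T^*T)x\|\geq\|x\|$, $\|Tx\|^2\leq\|(I+T^*T)x\|\,\|x\|$, the identification of the range of $p$ on $\maH\oplus\Dom(T^*)$, the identity $T^*TQ\xi=\xi-Q\xi$, and the $(3)\Rightarrow(2)$ step) are correct in the Hilbert-module setting.

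The one justification that would not stand as written is the word ``symmetrically'' in the step producing $\widetilde Q=(I+TT^*)^{-1}$. Hypothesis $(2)$ concerns $I+T^*T$ only; at that stage you do not yet know that $T^*$ is regular (nor that $T^{**}=T$), so bijectivity of $I+TT^*$ is not a formal mirror image of bijectivity of $I+T^*T$ and has to be proved. It can be proved inside your framework: the same coercivity estimates, now using closedness of $T^*$ and then of $T$, show that $I+TT^*$ is injective with closed range, and for $\eta\in\Dom(T^*)$ the element $w:=\eta-TQT^*\eta$ lies in $\Dom(TT^*)$ and satisfies $(I+TT^*)w=\eta$ (using $T^*TQ=I-Q$), so the range contains the dense submodule $\Dom(T^*)$ and hence is all of $\maH$. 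Alternatively you can bypass $\widetilde Q$ entirely: $G(T)\perp\sigma G(T^*)$, both are closed, an orthogonal sum of closed submodules is closed (for positive elements $a\leq a+b$ forces $\|a\|\leq\|a+b\|$), and the sum contains $\maH\oplus\Dom(T^*)$ by the splitting $x=Q(\xi+T^*\eta)$, $z=Tx-\eta$; this gives $(3)$ directly, and $(4)$ follows since the projection attached to an orthogonal decomposition is automatically bounded, symmetric and everywhere defined, hence adjointable. Since you explicitly defer exactly this point (together with adjointability of $TQ$) to the bounded-transform machinery of Lance and Skandalis --- which is all the paper itself offers for the entire proposition --- I would call this an imprecision to repair rather than a fatal gap; the rest is, as you say, bookkeeping.
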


An operator satisfying one of the equivalent properties in Proposition \ref{regularop} is called a regular operator. To sum up an operator $T$ is regular if it is densely defined as well as its adjoint and if its graph ia an orthocomplemented submodule. If $T$ is regular then so is $T^*$ and one has $(T^*)^*=T$. It is also {{worth pointing}} out that for such regular operator $T$, the operator $T^*T$ is a self-adjoint regular operator whose spectrum is contained in $\R_+$. Therefore for any continuous bounded function $f:\R_+\to \C$, there is a well defined adjointable operator $f(T^*T)$ given by the spectral theorem, see again \cite{Skandaliscourse}. We then set:
$$
Q(T) := T W(T) \text{ with }  W(T) := (I+T^*T)^{-1/2} \text{ and we define similarly  }Q(T^*) \text{ and } W(T^*).
$$
Notice for instance  that $Q(T^*)=Q(T)^*$, $W(T^*)= W(T)^*$ and $(I-Q(T^*)Q(T))^{1/2}=W(T)$. See again \cite{Skandaliscourse}.
Recall that we have fixed a continuous field of Hilbert spaces $F\subset  \prod_{x\in X} H_x$. Let $(D_x)_{x\in X}$ be a family of closed unbounded operators on the family of Hilbert spaces $(H_x)_{x\in X}$. We denote the dense domains of $D_x$ by $\Dom(D_x)$.

\begin{definition} \cite{Takemoto}\label{Continuousclosed}
The field $\D: =(D_x)_{x\in X}$ of closed operators is called a continuous field  if the field $(Q(D_x))_{x\in X}$  determines a continuous
field of bounded operators on the continuous field of Hilbert spaces $F$.
\end{definition}

\begin{remark}\
\begin{enumerate}
\item One could also ask for the equivalent condition that the field of projections $(p_x)_{x\in X}$ onto the graphs of $(D_x)_{x\in X}$ be a continuous field of bounded operators.
\item When $\D$ is a self-adjoint family, then it is continuous if and only if the two fields $((D_x\pm i)^{-1})_{x\in X}$ are strongly continuous.
\end{enumerate}
\end{remark}


%
\begin{lemma}
A continuous field $\D=(D_x)_{x\in X}$ of closed self-adjoint  operators is well-defined on $F$ and induces a regular self-adjoint operator on the associated Hilbert module $\maH$.
\end{lemma}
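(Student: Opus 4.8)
The plan is to reduce the statement to the standard bijective correspondence between regular operators on a Hilbert module and their bounded transforms (the Woronowicz picture recalled around Proposition~\ref{regularop}; see \cite{Skandaliscourse, Lance}), and then to verify that this correspondence is compatible with the evaluation maps $\maH \to H_x$.

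First I would feed the hypothesis straight into Remark~\ref{Adjointability_Continuousfields}: by Definition~\ref{Continuousclosed} the field $(Q(D_x))_{x\in X}$ is a continuous field of bounded operators on $F$, so it induces an adjointable operator $\mathcal{Q} \in \maL_{C_0(X)}(\maH)$. Since each $D_x$ is self-adjoint, each $Q(D_x) = D_x(I+D_x^2)^{-1/2}$ is a self-adjoint contraction, whence $\mathcal{Q} = \mathcal{Q}^*$ and $\|\mathcal{Q}\| \le 1$. Next I would note that the adjointable operator $I - \mathcal{Q}^2$ has fibre $I - Q(D_x)^2 = (I+D_x^2)^{-1} = W(D_x)^2$ at $x$, and that each of these has dense range in $H_x$ (its range is $\Dom(D_x^2)$, which is dense); Lemma~\ref{denserange} then shows that $I - \mathcal{Q}^*\mathcal{Q} = I - \mathcal{Q}^2$ has dense range on $\maH$. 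At this point the correspondence applies: $\mathcal{Q}$ is the bounded transform of a unique regular operator $\maD := \mathcal{Q}(I - \mathcal{Q}^2)^{-1/2}$ on $\maH$, with $\Dom(\maD) = \Range\bigl((I-\mathcal{Q}^2)^{1/2}\bigr)$, and $\maD$ is self-adjoint because $\mathcal{Q}$ is. This furnishes the regular self-adjoint operator asserted by the lemma.

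The remaining---and, I expect, the only genuinely delicate---step is to identify $\maD$ with the field $\D$, i.e.\ to show that the fibre of $\maD$ at each $x$ is $D_x$; this is also what makes precise the assertion that $\D$ is ``well-defined on $F$'', the domain of $\maD$ then being the set of $\xi \in \maH$ with $\xi(x) \in \Dom(D_x)$ for every $x$ and $(D_x\xi(x))_{x\in X} \in F$. Rather than inverting the bounded transform directly (which involves an unbounded function), I would work with resolvents. With $g(\lambda) := \lambda(1+\lambda^2)^{-1/2}$, a homeomorphism of $\R$ onto $(-1,1)$, one has $Q(D_x) = g(D_x)$; let $k \in C_b(\R)$ be the function determined on $(-1,1)$ by $k(g(\lambda)) = (\lambda + i)^{-1}$ and vanishing at $\pm 1$ and outside $[-1,1]$, so that the spectral theorem gives $(D_x+i)^{-1} = k(Q(D_x))$. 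By Lemma~\ref{compatibilitybounded} the adjointable operator $k(\mathcal{Q}) \in \maL_{C_0(X)}(\maH)$ induces the continuous field $\bigl(k(Q(D_x))\bigr)_{x\in X} = \bigl((D_x+i)^{-1}\bigr)_{x\in X}$, and likewise with $-i$ in place of $i$; since $k(\mathcal{Q}) = (\maD+i)^{-1}$ by the functional calculus for the regular operator $\maD$, the fibre of $\maD$ at $x$ is forced to be $D_x$, and the domain description follows. Most of the real work in the write-up is this bookkeeping with fibres and domains; everything else is a direct appeal to the cited correspondence and to the lemmas of this appendix.
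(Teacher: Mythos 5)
Your argument is correct and follows essentially the same route as the paper: pass to the adjointable operator induced by the continuous field of bounded transforms $(Q(D_x))_{x\in X}$, verify the dense-range condition for $(1-\mathcal{Q}^2)^{1/2}$ fibrewise via Lemma \ref{denserange}, and invoke the bounded-transform correspondence (\cite{Skandaliscourse}, Th\'eor\`eme 15.10) to produce the regular self-adjoint operator $\maD$ with $Q(\maD)=\mathcal{Q}$. The only cosmetic difference is in the ``well-defined on $F$'' step: the paper checks directly that on the dense range of the operator induced by $(W(D_x))_{x\in X}$ one has $D_xu_x=Q(D_x)v_x$, hence a continuous field, whereas you identify the fibres of $\maD$ through the resolvents $(D_x\pm i)^{-1}$ and Lemma \ref{compatibilitybounded}, which anticipates the compatibility-of-functional-calculi lemma that the paper proves separately afterwards.
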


\begin{proof}\
The field $(W(D_x):=(I+ D_x^2)^{-1/2})_{x\in X}$ is then also continuous and by Lemma \ref{denserange}, the adjointable operator $\maR$ induced by this continuous field $(W(D_x))_{x\in X}$ has dense range in $\maH$, which is thus a dense submodule of $\maH$. Let $u \in \Im (\maR)$, then each $u_x \in \Im(W(D_x))=\Dom(D_x)$, and there exists an element $v\in F$ such that $u_x = W(D_x)v_x$. Therefore, $D_xu_x = Q(D_x)v_x$, which is continuous by hypothesis. Since $(1-Q(D_x)^2)^{1/2} = W(D_x)$, the regular operator $\maS$ on the Hilbert module $\maH$ associated with the continuous field $(Q(D_x))_{x\in X}$ satisfies that $(1-\maS^2)^{1/2}$ has dense range in $\maH$ with the obvious relation $||S||\leq 1$. Therefore, applying \cite{Skandaliscourse}[Th\'{e}or\`{e}me 15.10], we deduce that $\maS$ corresponds uniquely to a regular operator $\maD$ on $\maH$ such that $\maS= Q(\maD)$.
\end{proof}

\begin{lemma}[Compatibility of functional calculi]\label{compatibility}
Let $\D=(D_x)_{x\in X}$ be a continuous family of closed self-adjoint operators. Let $\maD$ be the regular operator on $\maH$ induced by $\D$. Then for $f\in C_b(\R)$ the adjointable operator $f(\maD) \in \maL_{C_0(X)}(\maH)$, defined by the functional calculus of regular self-adjoint operators, corresponds to the (continuous) family $\{f(D_x)\}_{x\in X}$ of bounded operators.
\end{lemma}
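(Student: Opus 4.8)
The plan is to reduce everything to the bounded functional calculus via the bounded transform $Q(t)=t(1+t^2)^{-1/2}$, for which the compatibility between the global and fibrewise pictures has essentially already been installed. Recall that in the preceding lemma the regular self-adjoint operator $\maD$ was produced precisely as the one whose bounded transform $Q(\maD)=\maS$ is the adjointable operator determined by the continuous field $(Q(D_x))_{x\in X}$, and with $W(\maD)=(I-\maS^2)^{1/2}$ corresponding to the continuous field $(W(D_x))_{x\in X}=\bigl((I+D_x^2)^{-1/2}\bigr)_{x\in X}$. Thus the assertion holds by construction for the two functions $t\mapsto t(1+t^2)^{-1/2}$ and $t\mapsto (1+t^2)^{-1/2}$; and since the correspondence $\maL_{C_0(X)}(\maH)\cong\maB$ of Remark \ref{Adjointability_Continuousfields} is a $*$-isomorphism and $\maS$ is self-adjoint with $\|\maS\|\le 1$, Lemma \ref{compatibilitybounded} applied to $\maS$ already gives the statement for every function of the form $g(\maS)$ with $g\in C_b(Sp(\maS))$, the corresponding field being $(g(Q(D_x)))_{x\in X}$.

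The second step I would carry out is the resolvent case. Working first with $f\in C_0(\R)$, I would use that, by the definition of the functional calculus for regular self-adjoint operators \cite{Skandaliscourse}, the resolvent $(\maD+i)^{-1}$ is given by the same norm-continuous algebraic expression in $\maS=Q(\maD)$ and $W(\maD)=(I-\maS^2)^{1/2}$ that produces $(D_x+i)^{-1}$ from $Q(D_x)$ and $W(D_x)$ fibrewise; concretely, since $\maS+iW(\maD)$ is unitary, $(\maD+i)^{-1}=W(\maD)\maS-iW(\maD)^2$, and likewise with $-i$. As $\maS$ and $W(\maD)$ lie in the image of $\maB$ and the isomorphism $\maB\cong\maL_{C_0(X)}(\maH)$ respects composition, adjoints and the bounded functional calculus, it follows that $(\maD\pm i)^{-1}$ corresponds to the continuous field $\bigl((D_x\pm i)^{-1}\bigr)_{x\in X}$. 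By Stone--Weierstrass on the one-point compactification of $\R$ every $f\in C_0(\R)$ is a uniform limit of polynomials without constant term in the resolvent functions $r_{\pm}(t)=(t\pm i)^{-1}$, so invoking norm-continuity of the functional calculus (for $\maD$, and for the $D_x$ uniformly in $x$) together with norm-continuity of $\maB\cong\maL_{C_0(X)}(\maH)$, one concludes that $f(\maD)$ corresponds to $(f(D_x))_{x\in X}$ for all $f\in C_0(\R)$.

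The third step is the passage from $C_0(\R)$ to $C_b(\R)$. For general $f\in C_b(\R)$ the operator $f(\maD)$ is the multiplier attached to $f$ via $C_b(\R)=M(C_0(\R))$ and the strict topology \cite{Skandaliscourse}, and fibrewise $f(D_x)$ is the corresponding multiplier of $C_0(\R)$ acting in $H_x$. Choosing $g_n\in C_0(\R)$ with $0\le g_n\le 1$ and $g_n\to 1$ pointwise (for instance $g_n(t)=(1+t^2/n)^{-1}$), one has $fg_n\in C_0(\R)$, so by the second step $(fg_n)(\maD)=f(\maD)\,g_n(\maD)$ corresponds to the field $\bigl(f(D_x)g_n(D_x)\bigr)_{x\in X}$. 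Since $g_n(\maD)\to I$ strictly and each $g_n(D_x)\to I$ strongly, and the union of the closed ranges of the $g_n(\maD)$ is dense in $\maH$, the operator $f(\maD)$ is determined on a dense submodule by the field $(f(D_x))_{x\in X}$; both being adjointable, they coincide, and the continuity of this field (membership in $\maB$) is inherited from the $C_0(\R)$ case through the same strict approximation. I expect the genuinely delicate point to be exactly this last step: reconciling the two a priori distinct limiting constructions of $f(\maD)$ for merely bounded $f$ — the multiplier/strict one on the module side and the fibrewise bounded functional calculus — and verifying that $(f(D_x))_{x\in X}\in\maB$; everything else is a routine transcription of the standard bounded-transform identities through the isomorphism $\maB\cong\maL_{C_0(X)}(\maH)$.
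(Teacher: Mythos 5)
Your proof is correct and takes essentially the same route as the paper: the paper's own (one-line) argument likewise reduces the statement to Lemma \ref{compatibilitybounded} via the bounded transform $Q$, since the functional calculus for regular self-adjoint operators is defined through it. Your extra steps --- the resolvent identities, Stone--Weierstrass for $C_0(\R)$, and the strict/multiplier limit to pass to all of $C_b(\R)$ --- simply spell out details that the paper leaves implicit, and they are carried out correctly.
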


\begin{proof}
This follows from the compatibility of functional calculi of continuous families of bounded operators and adjointable operators (cf. Lemma \ref{compatibilitybounded}), since the functional calculus of regular operators is defined via the $Q$-transform.
\end{proof}

\bigskip


\begin{thebibliography}{EHS85}
\bibitem[BC:00]{BaumConnes}P. Baum, A. Connes, {\em K-theory for Lie groups and foliations}, Enseign. Math. 46 (2000), 3–42.
\bibitem[BCH:94]{BaumConnesHigson} P. Baum, A. Connes, N. Higson, {\em Classifying space for proper actions and K-theory of group $C^*$-algebras}, Contemporary Math. 167 (1994) 241-291.
\bibitem[BH:17]{BenameurHeitsch} M.-T. Benameur and J. L. Heitsch, {\em Enlargeability, foliations, and positive scalar curvature}, {{Invent. math. (2018). https://doi.org/10.1007/s00222-018-0829-6}} Preprint arXiv:1703.02684.
\bibitem[BR:15]{BenameurRoyJFA} M.-T. Benameur and I. Roy, {\em The Higson-Roe exact sequence and $\ell^2$-eta invariants}, Journal of Functional Analysis Vol. 268, Issue 4, 2015, pp. 974-1031.
\bibitem[Bl:86]{Blackadar} B. Blackadar, {\em $K$-theory for operator algebras},  Mathematical Sciences Research Institute Publications, 5. Springer-Verlag, New York, 1986.
\bibitem[Bl:96]{Blanchard} E. Blanchard, {\em D\'eformations de $C^*$-alg\`ebres de Hopf}, Bull. Soc. Math. France 124 (1996), no. 1, 141-215.
\bibitem[Co:79]{ConnesIntegration} A. Connes, {\em Sur la th\'eorie non commutative de l'int\'egration}, pp. 19-143, Lecture Notes in Math., 725, Springer, Berlin, 1979.
\bibitem[Co:94]{ConnesBook} A. Connes, {\em Noncommutative Geometry}, Academic Press, San Diego, CA, 1994, 661 p., ISBN 0-12-185860-X.
\bibitem[CoMo:82]{CoMo} A. Connes , H. Moscovici, {\em The L2-index theorem for homogeneous spaces of Lie groups},
Ann. of Math. 115 (1982), 291-330.
\bibitem[CoSk:84]{ConnesSkandalis} A. Connes and G. Skandalis, {\em The longitudinal index theorem for foliations.} Publ. Res. Inst. Math. Sci. 20 (1984), no. 6, 1139-1183.
\bibitem[Dix:77]{DixmierC*} J. Dixmier, {\em Les $C^*$-algebres et leur representations}, \'Editions Jacques Gabay, Paris, 1996. 403 pp. ISBN: 2-87647-013-6.
\bibitem[GTX:04]{GengouxTuXu} J.-L. Tu, P. Xu and C. Laurent-Gengoux, {\em Twisted $K$-theory of differentiable stacks},  Ann. Sci. Ecole Norm. Sup. (4) 37 (2004), no. 6, 841-910.
\bibitem[GWY:16]{GWY16} E. Guentner, R. Willett and G. Yu, {\em Dynamic asymptotic dimension and controlled operator $K$-theory}, 2016.
\bibitem[HPS:15]{HankePapeSchick} B. Hanke, D. Pape, T. Schick, {\em Codimension two index obstructions to positive scalar curvature}, Annales de l'Institut Fourier 65 (6) (2015), 2681-2710.
\bibitem[Hig:95]{HigsonPaschke} Higson, Nigel, {\em $C^*$-algebra extension theory and duality}, Journal of Functional Analysis, 129, 349-363, 1995.
\bibitem[Hig:00]{HigsonNov} Higson, Nigel. {\em Bivariant K-theory and the Novikov conjecture.} Geometric and Functional Analysis GAFA 10.3 (2000): 563-581.
\bibitem[HR:00]{HRbook} Nigel Higson and John Roe,{\em Analytic K-homology}, Oxford Mathematical Monographs,
Oxford University Press, Oxford, 2000.
\bibitem[HRI:05]{HRI05} N. Higson and J. Roe, {\em Mapping surgery to analysis I. Analytic signatures}, K-Theory 33 (2005), 277-346.
\bibitem[HRII:05]{HRII05} N. Higson and J. Roe, {\em Mapping surgery to analysis. II. Geometric signatures.} K-Theory 33 (2005), no. 4, 301-324.
\bibitem[HRIII:05]{HRIII05} N. Higson and J. Roe, {\em Mapping surgery to analysis. III. Exact sequences.} K-Theory 33 (2005), no. 4, 325-346.
\bibitem[HR:10]{HigsonRoe2010} N. Higson and J. Roe, {\em K-Homology, Assembly and Rigidity Theorems for Relative Eta Invariants}, Pure and Applied
Mathematics Quarterly Volume 6, Number 2 555-601, 2010.
\bibitem[Ka:80]{Kasparov}Kasparov, G. G. {\em Hilbert C*-modules: theorems of Stinespring and Voiculescu.} J. Operator Theory 4 (1980), no. 1, 133-150.
\bibitem[Ka:81]{Kasparov1} Kasparov, G.G. {\em Operator $K$ Functor and extension of $C^*$-algebras}, Mathematics of the USSR-Izvestiya 16. 513 (reprinted in English IOPScience, 2007)
\bibitem[Ka:88]{Kasparov2} Kasparov, G.G. {\em Equivariant $KK$-theory and the Novikov conjecture}, Invent. Math. 91, 147-201, 1988
\bibitem[La:95]{Lance}
E.Lance, {\em Hilbert $C^*$-modules: a toolkit for operator algebraists}, London Mathematical Society Lecture Note Series, 210. Cambridge University Press, Cambridge, 1995. x+130 pp. ISBN: 0-521-47910-X.
\bibitem[LeGall:99]{LeGall}
Le Gall, Pierre-Yves, {\em Theorie de Kasparov equivariante et groupoides I}, K-theory, 16 (1999), 361-390.
\bibitem[MRW:87]{MuhlyRenaultWilliams}  P. S. Muhly,  J. N. Renault and D. P. Williams, {\em Equivalence and isomorphism for groupoid C*-algebras}. J. Operator Theory 17 (1987), no. 1, 3-22.
\bibitem[MoSc:10]{MooreSchochet} C. C. Moore, C. L. Schochet, {\em Global Analysis on Foliated Spaces}, 2nd Edition, Mathematical Sciences Research Institute Publications, Cambride University Press, 2010.
\bibitem[P:81]{Paschke} W. Paschke {\em $K$-theory for commutants in the Calkin algebra}, Pacific J. Math. 95 (1981) 427-437.
\bibitem[Pat:03]{Paterson1} A. Paterson, {\em {The analytic index for proper, Lie groupoid actions}},  in: Groupoids in Analysis, Geometry, and Physics (Boulder, CO, 1999), in: Contemp. Math., vol. 282, Amer. Math. Soc., Providence, RI, 2001, pp. 115-135
\bibitem[Pat:07]{Paterson2} A. Paterson, {\em {The equivariant analytic index for proper groupoid actions}}, K-Theory. 32, 193-230 (2004).
\bibitem[Ph:88]{Phillips} N. C. Phillips, {\em {Equivariant K-theory for proper actions}}, Pitman Research Notes in Mathematics, Vol. 178, John Wiley, New York, 1988.
\bibitem[PS:13]{PiazzaSchick} P. Piazza and T. Schick,  {\em Rho-classes, index theory and Stolz' positive scalar curvature sequence} Journal of Topology, 7(4), 965-1004, 2013.
\bibitem[PPV:79]{PPV} M. Pimsner, S.Popa, D. Voiculescu, {\em {Homogeneous $C^*$-extensions of $C_0(X)\otimes K(H)$, Part I}}, J. Operator Theory I (1979),55-108.
\bibitem[Re:80]{RenaultBook}  J. Renault, {\em A groupoid approach to C*-algebras}. Lecture Notes in Mathematics, 793. Springer, Berlin, 1980.
\bibitem[Roe:02]{Roe} Roe, J. {\em Comparing analytic assembly maps.} The Quarterly Journal of Mathematics 53.2 (2002): 241-248.
\bibitem[Sh:00]{Shubin} M. Shubin, {\em Pseudodifferential Operators and Spectral Theory}, 2nd Edition, Springer Verlag, 2000.
\bibitem[Sk:84]{Skandalis} G. Skandalis, {\em Some remarks on Kasparov theory} J. Func. An. 56, pp. 337-347, 1984.
\bibitem[Sk]{Skandaliscourse} G. Skandalis,  {\em Lecture notes on Hilbert $C^*$-modules}.
\bibitem[Si:12]{SiegelThesis} Siegel, P. {\em {Homological calculations with the analytic structure group}}, PhD Thesis, Penn State Uninversity, 2012.
\bibitem[Tak:75]{Takemoto} Takemoto, H. {\em Decomposable operators on continuous fields of Hilbert spaces}, Tohoku Math. J.  27(1975), 413-435.
\bibitem[Tu:99]{Tu} J-L. Tu, {\em La conjecture de Novikov pour les feuilletages hyperboliques}, K theory 16: 129-184, 1999
\bibitem[V:76]{Voiculescu} D. Voiculescu, {\em A non-commutative Weyl-von Neumann theorem}, Rev. Roumaine Math. Pures Appl. 21 (1976)
\bibitem[Wa:70]{Wall} C. T. C. Wall, {\em Surgery on compact manifolds}, Academic Press, Boston, 1970.
\bibitem[W:15]{Williams} D. P. Williams, {\em Haar systems on equivalent groupoids}, Proc. Amer. Math. Soc. Ser. B 3 (2016), 1-8.
{{\bibitem[Z:17]{Zenobi} V. F. Zenobi, {\em Mapping the surgery exact sequence for topological manifolds to analysis}, J. Topol. Anal. 9 (2017), no. 2, 329-361.}}
\end{thebibliography}
\end{document}